\newtheorem{assumption}{Assumption}
\newtheorem{notation}{Notation}
\begin{document}

\markboth{Rainer Schneckenleitner, Stefan Takacs}{Condition number bounds for IETI-DP methods that are explicit in $h$ and $p$}

%
\catchline{}{}{}{}{}
%

\title{Condition number bounds for IETI-DP methods \\ that are explicit in $h$ and $p$}

\author{Rainer Schneckenleitner}

\address{Institute of Computational Mathematics,\\
Johannes Kepler University Linz,\\
Altenberger Str.~69, 4040 Linz, Austria\\
schneckenleitner@numa.uni-linz.ac.at}

\author{Stefan Takacs}

\address{Johann Radon Institute Institute for Computational and Applied Mathematics (RICAM),\\
Austrian Academy of Sciences,\\
Altenberger Str.~69, 4040 Linz, Austria\\
stefan.takacs@ricam.oeaw.ac.at}

\maketitle

\selectlanguage{english}
\begin{abstract}
	We study the convergence behavior of Dual-Primal Isogeometric Tearing and
	Interconnecting (IETI-DP) methods for solving large-scale algebraic systems arising
	from multi-patch Isogeometric Analysis. We focus on the Poisson problem on two
	dimensional computational domains. We provide a convergence analysis that covers
	several choices of the primal degrees of freedom: the vertex values, the edge
	averages, and the combination of both. We derive condition number bounds that show
	the expected behavior in the grid size $h$ and that are quasi-linear in the
	spline degree $p$.
\end{abstract}

\keywords{Isogeometric Analysis; FETI-DP; $p$-Robustness.}

\ccode{AMS Subject Classification: 65N55, 65N30, 65F08, 65D07}

%
%
\section{Introduction}
\label{sec:1}

Isogeometric Analysis (IgA), see Ref.~\refcite{HughesCottrellBazilevs:2005},
is a method for solving partial differential equations
(PDEs) in a way that integrates better with standard computer aided design (CAD)
software than classical finite element (FEM) simulation. Both the computational
domain and the solution of the PDE are represented as linear combination of
tensor-product B-splines or non-uniform rational B-splines (NURBS). Since only simple domains
can be represented by just one such spline function, usually the computational domain
is decomposed into subdomains, in IgA usually called patches, where each patch is parameterized
with its own spline function.

For IgA, like
for any other discretization method for PDEs, fast iterative solvers are of interest.
Domain decomposition solvers are a natural choice for multi-patch IgA.
Nowadays, Finite Element Tearing and Interconnecting (FETI) methods have 
become the most popular non-overlapping domain decomposition methods. 
After the introduction of the FETI technology by C.~Farhat and F.-X.~Roux in 1991\cite{FarhatRoux:1991a}, many FETI versions have been developed for different applications, see, e.g., Refs.~\refcite{ToselliWidlund:2005a,Pechstein:2013a,KorneevLanger:2017a} for a comprehensive
presentation of domain decomposition methods and, in particular, of FETI methods.
The most advanced and most widely used versions are certainly the 
Dual-Primal FETI methods (FETI-DP), which have been introduced in Ref.~\refcite{FarhatLesoinneLeTallecPiersonRixen:2001a}.
The Balancing Domain Decomposition by Constraints (BDDC) methods 
introduced in Ref.~\refcite{Dohrmann:2003} can be seen as the primal counterpart of the
FETI-DP methods. In Ref.~\refcite{MandelDohrmannTezaur:2005a}, it has been proven
that the spectra of the preconditioned systems for FETI-DP and BDDC
are essentially the same. Moreover, that paper gives an abstract framework for
proving condition number bounds which we use in this paper.

The Ref.~\refcite{KleissPechsteinJuttlerTomar:2012} extends FETI-DP to isogeometric
discretizations. The proposed method is sometimes called Dual-Primal Isogeometric Tearing
and Interconnecting (IETI-DP) method. Convergence analysis for IETI-DP methods\cite{HoferLanger:2017c,HoferLanger:2019a} and BDDC methods for IgA\cite{Veiga}
has been provided for numerous contexts. The given condition number bounds
are explicit in grid and patch size, sometimes also in other
parameters, like diffusion parameters. Convergence theory that also covers
the dependence on the spline degree, is not available in the context of IgA.

Condition number estimates for FETI methods applied to spectral element discretizations
have been provided in Refs.~\refcite{KlawonnPavarino:2008,Pavarino:2006}, which show that
the condition number only grows poly-logarithmic in the polynomial degree. Comparable bounds
are also known for Schwarz type\cite{GuoCao:1997,SchoeberlMelenkPechsteinZaglmayr:2007}
and substructuring\cite{Ainsworth:1996,PavarinoWidlund:1996} methods for 
$hp$ finite element discretizations. For an overview and more references, see Ref.~\refcite{KorneevLanger:2015}.
However, for $hp$-FETI-DP algorithms, such bounds are not known to the authors.
Often the analysis for spectral methods is carried out by showing that the stiffness matrix
of interest is spectrally equivalent to a stiffness matrix for a suitably constructed low-order
problem. While such an analysis yields appealing upper bounds for spectral methods, a straight-forward extension
of the analysis to IgA does not yield reasonable statements concerning the dependence on
the spline degree. An alternative is to work directly on the function spaces of interest.
One ingredient for any FETI analysis are energy bounds for the discrete harmonic extension,
which can be done by
explicitly constructing bounded extension operators, cf. Ref.~\refcite{Nepomnyaschikh:1995}
for the case of $h$-FEM, Refs.~\refcite{Beuchler:2005,BeuchlerSchoberl:2005} for spectral elements,
and the paper on hand for spline spaces.

One of the strengths of IgA is $k$-refinement which allows the construction of discretizations
that show the approximation
power of a high-order method for the costs (in terms of the degrees of freedom) of
a low order method. Certainly, an efficient realization also requires a fast solver
that is (almost) robust in the spline degree $p$. In the last years, $p$-robust approximation error estimates
for single-patch domains\cite{TakacsTakacs:2015,SandeManniSpeelers:2019}
and multi-patch domains\cite{Takacs:2018}, as well as $p$-robust linear
solvers, like the fast diagonalization method\cite{SangalliTani:2016}
or multigrid methods for single-patch domains\cite{DonatelliGaroniManniSerraCapizzanoSpeleers:2017,HofreitherTakacs:2017,PedelaRiva:2018} and multi-patch domains\cite{Takacs:2018} have
been proposed.

In the present paper,
we provide a convergence analysis for a standard IETI-DP solver.
The convergence analysis covers three cases for the choice of the
primal degrees of freedom: Vertex values (Algorithm~A), edge averages (Algorithm~B)
and a combination of both (Algorithm~C). For all cases,  we
show that the condition number of the preconditioned system is bounded by
\[
	C \, p \, \left(1+\log p+\max_k \log \frac{H^{(k)}}{h^{(k)}}\right)^2,
\]
where $p$ is the spline degree, $H^{(k)}$ is the diameter of the $k$th patch,
$h^{(k)}$ is the grid size on the $k$th patch, and
the constant $C$ only depends on the geometry function, the quasi-uniformity of the
grids on the individual patches and the maximum number of patches sharing any vertex.
This means that the constant $C$ is independent
of the number of patches, their diameters, the grid sizes, the spline degree and
the smoothness of the splines, i.e., all choices of the smoothness between $C^0$
and $C^{p-1}$ are covered. 

The remainder of this paper is organized as follows.
In Section~\ref{sec:2}, we introduce the model problem and discuss its
isogeometric discretization.
The proposed IETI-DP solver is presented in Section~\ref{sec:3}.
In Section~\ref{sec:4}, we give the convergence theory.
Then, we proceed with numerical experiments that illustrate our theoretical findings 
in Section~\ref{sec:5}.
In Section~\ref{sec:6}, we conclude with some final remarks.

\section{The model problem}
\label{sec:2}
%
%
%

We consider a standard Poisson problem with homogeneous Dirichlet boundary
conditions as model problem.
Let $\Omega \subset \mathbb{R}^2$ be an open, bounded and simply connected domain with
Lipschitz boundary. For a given right-hand side $f \in L_2(\Omega)$, we want
to find a function $u \in H^1_0(\Omega)$
such that 
\begin{align}
	\label{continousVarProb}
	\underbrace{
		\int_{\Omega}^{} \nabla u(x) \cdot \nabla v(x) \; \mathrm dx
	}_{\displaystyle a(u,v):= }
	=
	\underbrace{
		\int_{\Omega}^{} f(x)\,v(x) \; \mathrm dx
	}_{\displaystyle \langle f,v\rangle:= }
	\quad \mbox{for all}\quad v \in H^1_0(\Omega).
\end{align}
Here and in what follows, we denote by $L_2(\Omega)$ and $H^s(\Omega)$, $s\in\mathbb R$, 
the usual Lebesgue and Sobolev spaces, respectively. $H^1_0(\Omega)\subset
H^1(\Omega)$ is the subspace of functions vanishing on~$\partial\Omega$,
the boundary of $\Omega$.
These spaces are equipped with the standard scalar products $(\cdot, \cdot)_{L_2(\Omega)}$
and $(\cdot, \cdot)_{H^1(\Omega)}:=(\nabla\cdot, \nabla\cdot)_{L_2(\Omega)}$, seminorms $|\cdot|_{H^s(\Omega)}$ and norms $\|\cdot\|_{L_2(\Omega)}$ and $\|\cdot\|_{H^s(\Omega)}$.

We assume that the physical domain $\Omega$ is composed of $K$ non-overlapping
patches $\Omega^{(k)}$, i.e., we have
\begin{align*}
	\overline{\Omega} = \bigcup_{k=1}^K \overline{\Omega^{(k)}} \quad \text{and}\quad
	\Omega^{(k)} \cap \Omega^{(\ell)} = \emptyset \quad\text{for all}\quad k \neq \ell,
\end{align*}
where $\overline{T}$ denotes the closure of the set $T$. Each patch
$\Omega^{(k)}$ is parameterized by a geometry mapping 
\begin{align}
	G_k:\widehat{\Omega}:=(0,1)^2 \rightarrow \Omega^{(k)}:=G_k(\widehat{\Omega}) \subset \mathbb{R}^2, 
\end{align}
which can be continuously extended to the closure of the parameter domain
$\widehat{\Omega}$. In IgA, the geometry mapping is typically represented using
B-splines or NURBS. For the analysis, we only require the following assumption.

\begin{assumption}
	\label{ass:nabla}
	There are patch sizes $H^{(k)}>0$ for $k=1,\ldots,K$ and a
	constant $C_1>0$ such that
	\begin{align*}
		\| \nabla G_k \|_{L_\infty(\widehat{\Omega}^{(k)})} \le C_1\, H^{(k)}
		\quad\text{and}\quad
		\| (\nabla G_k)^{-1} \|_{L_\infty(\widehat{\Omega}^{(k)})} \le C_1\, \frac{1}{H^{(k)}}
	\end{align*}
	holds for all $k=1,\ldots,K$.
\end{assumption}

The following assumption guarantees that the patches form an admissible decomposition, i.e.,
that there are no T-junctions.
\begin{assumption}
	\label{ass:conforming}
	For any two patch indices $k\not=\ell$, the set
	$\overline{\Omega^{(k)}} \cap \overline{\Omega^{(\ell)}}$
	is either a common edge (including the neighboring vertices),
	a common vertex, or empty.
\end{assumption}
If two patches $\Omega^{(k)}$ and $\Omega^{(\ell)}$ share a common edge,
we denote that edge by $\Gamma^{(k,\ell)}=\Gamma^{(\ell,k)}$, and its pre-images by
$\widehat{\Gamma}^{(k,\ell)}:=G_k^{-1}(\Gamma^{(k,\ell)})$ and
$\widehat{\Gamma}^{(\ell,k)}:=G_{\ell}^{-1}(\Gamma^{(k,\ell)})$. Moreover, we define
\[
\mathcal N_\Gamma(k):=
\{ \ell \;:\; \Omega^{(k)}\mbox{ and }\Omega^{(\ell)}
\mbox{ share an edge}\}.
\]
Analogously, if two patches $\Omega^{(k)}$ and $\Omega^{(\ell)}$ share
only a vertex,
we denote that vertex by $\textbf x^{(k,\ell)}=\textbf x^{(\ell,k)}$,
and its corresponding pre-images by
$\widehat{\textbf x}^{(k,\ell)}:=G_k^{-1}(\textbf x^{(k,\ell)})$ and
$\widehat{\textbf x}^{(\ell,k)}:=G_{\ell}^{-1}(\textbf x^{(k,\ell)})$. Moreover, we
define $\mathcal{P}(\textbf x):= \{k\;:\; \textbf x\in \overline{\Omega^{(k)}} \}$ and
\[
\mathcal N_{\textbf x}(k):=
\{ \ell \;:\; \Omega^{(k)}\mbox{ and }\Omega^{(\ell)}
\mbox{ share a vertex or an edge}\}
.
\]
For the analysis, we need that the number of neighbors is bounded.
\begin{assumption}
	\label{ass:neighbors}
	There is a constant $C_2>0$ such that
	$
	| \mathcal{N}_{\textbf x}(k)| \le C_2
	$ 
	for all patches $k=1,\ldots,K$.
\end{assumption}

The provided assumptions guarantee that the pre-images in the parameter domain $\widehat \Gamma_D^{(k)}
:= G_k^{-1}(\partial\Omega\cap\partial\Omega^{(k)})$ of the (Dirichlet) boundary $\Gamma_D=\partial
\Omega$ consists of whole edges.

Now, we introduce the isogeometric function spaces. Let $p \in \mathbb{N}:={1,2,3,\dots}$ be a given spline degree. For simplicity, we assume that the spline
degree is uniformly throughout the overall domain. We use B-splines as discretization
space. To keep the paper self-contained, we introduce the basic spline notation.
The splines are defined based on a $p$-open knot vector:
\[
\Xi=(\xi_1,\ldots,\xi_{n+p+1})
=(\underbrace{\zeta_1,\ldots,\zeta_1}_{\displaystyle m_1 },
\underbrace{\zeta_2,\ldots,\zeta_2}_{\displaystyle m_2},
\ldots,
\underbrace{\zeta_{N_Z},\ldots,\zeta_{N_Z}}_{\displaystyle m_{N_Z}}),
\]
where the multiplicities satisfy $m_1=m_{N_Z}=p+1$,
and $m_i\in\{1,\ldots,p\}$ for $i=~2,\ldots,N_Z-1$ and the breakpoints satisfy
$\zeta_1<\zeta_2<\cdots<\zeta_{N_Z}$. We call
\[
Z=(\zeta_1,\ldots,\zeta_{N_Z})
\quad\mbox{and}\quad
M=(m_1,\ldots,m_{N_Z})
\]
the vector of breakpoints associated to $\Xi$ and
the vector of multiplicities associated to $\Xi$, respectively.
We denote the standard B-spline basis as obtained by the Cox-de~Boor
formula, cf. (2.1) and (2.2) in Ref.~\refcite{Cottrell:Hughes:Bazilevs},
by $(B[p,\Xi,i])_{i=1}^n$. The corresponding
spline space is given as linear span of these basis functions, i.e.,
\[
S[p,\Xi]:=\text{span}\{B[p,\Xi,1],\ldots, B[p,\Xi,n] \}.
\]

To obtain the isogeometric function space, we choose for each patch two $p$-open
knot vectors $\Xi^{(k,1)}$ and $\Xi^{(k,2)}$ over $(0,1)$. On the parameter domain
$\widehat \Omega$, we define the tensor-product spline space by
$\widehat{V}^{(k)}$ and its transformation to the physical domain by
$V^{(k)}$:
\begin{equation}\label{eq:vkdef}
\widehat{V}^{(k)}
:= \{ v\in S[p,\Xi^{(k,1)}] \otimes S[p,\Xi^{(k,2)}]
\;:\;
v|_{\widehat \Gamma_D^{(k)}} = 0 \}	
\;\mbox{and}\;
V^{(k)} := \widehat{V}^{(k)} \circ G_k^{-1},
\end{equation}
where $v|_T$ denotes the restriction of $v$ to $T$ (trace operator).
We introduce a basis for the space $\widehat V^{(k)}$ by choosing
the basis functions of the standard B-spline basis
that vanish on the Dirichlet boundary~$\widehat \Gamma_D^{(k)}$. We order the total
number of $N^{(k)}=N_\mathrm{I}^{(k)}+N_\Gamma^{(k)}$ basis functions such that the first
$N_\mathrm{I}^{(k)}$ are supported only in the interior of the patch
and the following $N_\Gamma^{(k)}$ basis function contribute to the boundary
of the patch:
\begin{equation}\label{eq:basis:def}
\begin{aligned}
&\widehat \Phi^{(k)} := ( \widehat\phi_i^{(k)} )_{i=1}^{N^{(k)}},\\
&\{ \widehat \phi_i^{(k)} \}
= \{ \widehat \phi\,:\, \exists j_1,j_2\;:\;\widehat\phi(x,y)=B[p,\Xi^{(k,1)},j_1](x)\, B[p,\Xi^{(k,2)},j_2](y)\wedge
\widehat \phi|_{\widehat \Gamma_D^{(k)}}  = 0 \},
\\
&	
\widehat\phi^{(k)}_i|_{\partial \widehat \Omega}  = 0 \Leftrightarrow i \in\{1,\ldots, N_\mathrm{I}^{(k)}\},
\;
\mbox{and}
\;
\widehat\phi^{(k)}_i|_{\partial \widehat \Omega} \not = 0 \Leftrightarrow i \in N_\mathrm{I}^{(k)}+\{1,\ldots, N_\Gamma^{(k)}\}.
\end{aligned}
\end{equation}

Following the pull-back principle used for defining the function space on the physical
domain, we define the basis for $V^{(k)}$ by
\[
\Phi^{(k)} := ( \phi_i^{(k)} )_{i=1}^{N^{(k)}}
\quad\mbox{and}\quad
\phi_{i}^{(k)} := \widehat \phi_{i}^{(k)} \circ G_k^{-1}.
\]
We assume that the underlying grids on
each of the patches are quasi-uniform.
\begin{assumption}
	\label{ass:quasiuniform}
	There are grid sizes $\widehat{h}^{(k)}>0$ for $k=1,\ldots,K$
	and a constant $C_3>0$ such that
	\[
	C_3 \, \widehat{h}^{(k)} \le
	\zeta_{i+1}^{(k,\delta)} - \zeta_i^{(k,\delta)} \le \widehat{h}^{(k)}
	\]
	holds for all $i=1,\ldots,N_Z^{(k,\delta)}-1$ and all $\delta=1,2$.
\end{assumption}

The grid size on the physical domain is defined via
$h^{(k)}:=\widehat{h}^{(k)} H^{(k)}$.

To be able to set up a $H^1$-conforming discretization on the whole domain $\Omega$,
we assume that the function spaces are fully matching.
\begin{assumption}
	\label{ass:fullyMatching}
	For any two patches $\Omega^{(k)}$ and $\Omega^{(\ell)}$ sharing a common
	edge~$\Gamma^{(k,\ell)}$, the following statement holds true.
	For any basis function $\phi_i^{(k)}$ that does not vanish on~$\Gamma^{(k,\ell)}$,
	there is exactly one basis function $\phi_j^{(\ell)}$ such that they agree
	on~$\Gamma^{(k,\ell)}$, i.e.,
	\begin{equation}
	\label{fullyMatching}
	\phi_i^{(k)} |_{\Gamma^{(k,\ell)}} \not = 0
	\quad\Rightarrow\quad
	\exists j\;:\;
	\phi_i^{(k)} |_{\Gamma^{(k,\ell)}} 
	=
	\phi_j^{(\ell)} |_{\Gamma^{(k,\ell)}}.
	\end{equation}	
\end{assumption}
This assumption is satisfied if the the spline degrees $p$, the knot vectors,
and the geometry mappings agree on all interfaces.
Based on this assumption, we define the overall function space as
\begin{equation}\label{eq:vdef}
V := \lbrace u \in H^1_0(\Omega): u|_{\Omega^{(k)}} \in V^{(k)} \text{ for } k = 1, \dots, K \rbrace.
\end{equation}
Using these function spaces, we obtain the Galerkin discretization of the variational
problem~\eqref{continousVarProb}, which reads as follows. Find $u \in V$ such that 
\begin{align}
	\label{discreteVarProb}
	a(u,v)=\langle f,v\rangle
	\quad \mbox{for all}\quad v \in V,
\end{align}
where $a(\cdot,\cdot)$ and $\langle f,\cdot\rangle$ are as defined in~\eqref{continousVarProb}.
By choosing a basis for $V$, the variational problem~\eqref{discreteVarProb} can
be written in a matrix-vector form. For the construction of a IETI-DP method, we omit
this step and directly work with the discrete variational
problem~\eqref{discreteVarProb}.

\section{The IETI-DP solver}
\label{sec:3}
%
%
%

In this section, we derive a IETI-DP solver for the discretized variational
problem~\eqref{discreteVarProb}. First, we observe that the bilinear form $a(\cdot,\cdot)$
and the linear form $\langle f,\cdot \rangle$ are the sum of contributions of each of the patches,
i.e.,
\[
a(u,v) = \sum_{k=1}^K a^{(k)}(u,v),
\quad\mbox{where} \quad
a^{(k)}(u,v) := \int_{\Omega^{(k)}}^{} \nabla u(x) \cdot \nabla v(x) \; \mathrm dx 
\]
and
\[
\langle f,v\rangle = \sum_{k=1}^K \langle f^{(k)},v\rangle,
\quad\mbox{where} \quad
\langle f^{(k)},v\rangle := \int_{\Omega^{(k)}}^{} f(x) \, v(x) \; \mathrm dx .
\]
By discretizing $a^{(k)}(\cdot,\cdot)$ and $\langle f^{(k)},\cdot \rangle$
using the basis $\Phi^{(k)}$, we obtain the matrix-vector system
\begin{equation}\label{linsys:local}
A^{(k)}\, \underline{u}^{(k)} = \underline{f}^{(k)},
\end{equation}
where $A^{(k)} = [a^{(k)}(\phi_j^{(k)},\phi_i^{(k)})]_{i,j=1}^{N^{(k)}}$
is a stiffness matrix and
$\underline{f}^{(k)} = [ \langle f^{(k)},\phi_i^{(k)}\rangle]_{i=1}^{N^{(k)}}$
is a load vector.

Since we are interested in the solution of the original problem~\eqref{discreteVarProb}, we need to enforce continuity. Doing this
is an operation on the interfaces only. Note that the basis $\Phi^{(k)}$
has been defined such that the first~$N_\mathrm{I}^{(k)}$ basis functions are supported
only in the interior of the patch and the following~$N_\Gamma^{(k)}$ basis functions do
contribute to the boundary of the patch, see~\eqref{eq:basis:def}.
Following this decomposition, we obtain
\[
A^{(k)}
=
\left(
\begin{array}{cccc}
A_{\mathrm I \mathrm I}^{(k)} & A_{\mathrm I\Gamma}^{(k)}\\
A_{\Gamma \mathrm I}^{(k)} & A_{\Gamma\Gamma}^{(k)}\\
\end{array}
\right)
\quad
\mbox{and}
\quad
\underline{f}^{(k)}
=
\left(
\begin{array}{c}
\underline{f}_{\mathrm I}^{(k)}\\\underline{f}_\Gamma^{(k)}
\end{array}
\right).
\]
Using this decomposition, we rewrite the linear systems~\eqref{linsys:local} as
Schur complement:
\begin{equation}\label{eq:s:sys}
S^{(k)} \underline w^{(k)} = \underline{g}^{(k)},
\end{equation}
where
\begin{equation}\label{eq:s:def}
S^{(k)} := A_{\Gamma\Gamma}^{(k)} - A_{\Gamma \mathrm I}^{(k)}(A_{\mathrm I\mathrm I}^{(k)})^{-1} A_{\mathrm I\Gamma}^{(k)}
\quad\mbox{and}\quad
\underline{g}^{(k)} := \underline{f}_\Gamma^{(k)}
- A_{\Gamma \mathrm I}^{(k)} (A_{\mathrm I\mathrm I}^{(k)})^{-1}  \underline{f}_{\mathrm I}^{(k)}.
\end{equation}
The overall solution is then obtain by
\begin{equation}\label{eq:inner:def}
\underline{u}^{(k)}
=
\left(
\begin{array}{c}
\underline{u}_{\mathrm I}^{(k)}\\\underline{u}_\Gamma^{(k)}
\end{array}
\right)
=
\left(
\begin{array}{c}
(A_{\mathrm I\mathrm I}^{(k)})^{-1}
A_{\Gamma \mathrm I}^{(k)}
\underline{w}^{(k)}\\\underline{w}^{(k)}
\end{array}
\right).
\end{equation}
By combining the linear systems~\eqref{eq:s:sys} for all patches, we obtain
the linear system
\begin{equation}\label{eq:schursys}
S \underline w = \underline g,
\end{equation}
where
\[
S := \left(
\begin{array}{ccc}
S^{(1)} \\ & \ddots \\ && S^{(K)}
\end{array}	
\right),
\quad 
\underline w := \left(
\begin{array}{c}
\underline w^{(1)} \\  \vdots \\ \underline w^{(K)}
\end{array}	
\right)
\quad \mbox{and}\quad
\underline g := \left(
\begin{array}{c}
\underline g^{(1)} \\  \vdots \\ \underline g^{(K)}
\end{array}	
\right).
\]

As a next step, we give an interpretation of the linear system in a variational sense.
Let 
\[
W^{(k)} := \{ v|_{\partial \Omega^{(k)}} \,:\, v \in V^{(k)}\}
\quad\mbox{and}\quad
W := \prod_{k=1}^K W^{(k)}
\]
be the function spaces on the skeleton.
Let $\mathcal{H}_h^{(k)}:W^{(k)}\rightarrow V^{(k)}$ be the discrete harmonic extension,
i.e., such that
\[
(\mathcal{H}_h^{(k)} w^{(k)})|_{\partial \Omega^{(k)}} = w^{(k)},
\quad\mbox{and}\quad
a(\mathcal{H}_h^{(k)} w^{(k)},v^{(k)})=0
\quad\mbox{for all}\quad
v^{(k)}\in V_0^{(k)},
\]
where $V_0^{(k)}:=\{ v\in V^{(k)}\,:\, v|_{\partial\Omega^{(k)}} = 0 \}$.
The linear system~\eqref{eq:schursys} can be rewritten as follows. Find
$w =(w^{(1)},\ldots,w^{(K)}) \in W$ such that 
\[
\underbrace{\sum_{k=1}^K a^{(k)} ( \mathcal{H}_h^{(k)} w^{(k)}, \mathcal{H}_h^{(k)} q^{(k)} )}
_{\displaystyle s(w,q):=}
=
\underbrace{
	\sum_{k=1}^K \langle f, \mathcal{H}_h^{(k)} q^{(k)} \rangle}
_{\displaystyle \langle g,q\rangle:=}
\quad\mbox{for all}\quad
q = (q^{(1)},\ldots,q^{(K)}) \in W.
\]
The next step is to enforce continuity between the patches. So, for any
two patches $\Omega^{(k)}$ and $\Omega^{(\ell)}$ that share an edge
$\Gamma^{(k,\ell)}$ and all functions $\phi_i^{(k)}$ and $\phi_j^{(\ell)}$
such that
\[
\phi_i^{(k)}|_{\Gamma^{(k,\ell)}} = \phi_j^{(\ell)}|_{\Gamma^{(k,\ell)}}\not=0
,
\]
we introduce a constraint of the form
\begin{equation}\label{eq:def:b}
w^{(k)}_{i^*} - w^{(\ell)}_{j^*} = 0,
\end{equation}
where $i^* = i-N_\mathrm{I}^{(k)}$ and $j^* = j-N_\mathrm{I}^{(\ell)}$ are the corresponding indices.
If we choose the vertex values as primal degrees of freedom (see also Algorithms~A
and C below), we do not introduce such constraints for the functions associated
to the vertices, see Figure~\ref{fig:ommiting}. If only the edge averages are chosen
as primal degrees of freedom (see also Algorithm~B below), we additionally introduce
constraints of the form \eqref{eq:def:b} for the corresponding degrees of freedom
for the patches that share only a vertex. This means that we represent the
vertex values in a fully redundant way, see Figure~\ref{fig:fully}. 

Say, the number of constraints of the form~\eqref{eq:def:b} is $N_\Gamma$. Then,
we define a matrix $B \in \mathbb R^{N_\Gamma\times N}$ such
that each of the constraints~\eqref{eq:def:b} constitutes one row of the
linear system
\[
B \, \underline w=0.
\]
This means that each row of the matrix $B$ has exactly two non-vanishing
entries: one with value $1$ and one with value $-1$. We decompose the matrix
$B$ into a collection of patch-local matrices $B^{(1)},\ldots,B^{(K)}$
such that $B = (B^{(1)} \cdots B^{(K)})$.
\begin{figure}
	\begin{center}
		\begin{minipage}{0.35\linewidth}
			\begin{tikzpicture}
			\fill[gray!20] (-0.2,0) -- (1.5,0) -- (1.5,1.7) -- (-0.2,1.7);
			\fill[gray!20] (-0.2,-1) -- (1.5,-1) -- (1.5,-2.7) -- (-0.2,-2.7);
			\fill[gray!20] (4.2,0) -- (2.5,0) -- (2.5,1.7) -- (4.2,1.7);
			\fill[gray!20] (4.2,-1) -- (2.5,-1) -- (2.5,-2.7) -- (4.2,-2.7);
			
			\draw (-0.2,0) -- (1.5,0) -- (1.5,1.7) node at (0.7,0.8) {$\Omega_1$}; 
			\draw (-0.2,-1) -- (1.5,-1) -- (1.5,-2.7) node at (0.7,-1.85) {$\Omega_2$}; 
			\draw (4.2,0) -- (2.5,0) -- (2.5,1.7) node at (3.4,0.8) {$\Omega_3$}; 
			\draw (4.2,-1) -- (2.5,-1) -- (2.5,-2.7) node at (3.4,-1.85) {$\Omega_4$}; 
			
			\draw (1.5,0) node[circle, fill, inner sep = 2pt] (A1) {};
			\draw (1.5,0.75) node[circle, fill, inner sep = 2pt] (A2) {};
			\draw (1.5,1.5) node[circle, fill, inner sep = 2pt] (A3) {};
			\draw (0.75,0) node[circle, fill, inner sep = 2pt] (A4) {};
			\draw (0,0) node[circle, fill, inner sep = 2pt] (A5) {};
			
			\draw (2.5,0) node[circle, fill, inner sep = 2pt] (B1) {};
			\draw (2.5,0.75) node[circle, fill, inner sep = 2pt] (B2) {};
			\draw (2.5,1.5) node[circle, fill, inner sep = 2pt] (B3) {};
			\draw (3.25,0) node[circle, fill, inner sep = 2pt] (B4) {};
			\draw (4,0) node[circle, fill, inner sep = 2pt] (B5) {};
			
			\draw (1.5,-1) node[circle, fill, inner sep = 2pt] (C1) {};
			\draw (1.5,-1.75) node[circle, fill, inner sep = 2pt] (C2) {};
			\draw (1.5,-2.5) node[circle, fill, inner sep = 2pt] (C3) {};
			\draw (0.75,-1) node[circle, fill, inner sep = 2pt] (C4) {};
			\draw (0,-1) node[circle, fill, inner sep = 2pt] (C5) {};
			
			\draw (2.5,-1) node[circle, fill, inner sep = 2pt] (D1) {};
			\draw (2.5,-1.75) node[circle, fill, inner sep = 2pt] (D2) {};
			\draw (2.5,-2.5) node[circle, fill, inner sep = 2pt] (D3) {};
			\draw (3.25,-1) node[circle, fill, inner sep = 2pt] (D4) {};
			\draw (4,-1) node[circle, fill, inner sep = 2pt] (D5) {};
			
			\draw[<->, line width = 1pt, latex-latex]
			(A2) edge (B2) (A3) edge (B3)
			(A4) edge (C4) (A5) edge (C5)
			(B4) edge (D4) (B5) edge (D5)
			(C2) edge (D2) (C3) edge (D3);
			\end{tikzpicture}
			\captionof{figure}{Omitting vertices\\ (Algorithms~A and C) \label{fig:ommiting}}
		\end{minipage}\hspace{4em}
		\begin{minipage}{0.35\linewidth}
			\begin{tikzpicture}
			\fill[gray!20] (-0.2,0) -- (1.5,0) -- (1.5,1.7) -- (-0.2,1.7);
			\fill[gray!20] (-0.2,-1) -- (1.5,-1) -- (1.5,-2.7) -- (-0.2,-2.7);
			\fill[gray!20] (4.2,0) -- (2.5,0) -- (2.5,1.7) -- (4.2,1.7);
			\fill[gray!20] (4.2,-1) -- (2.5,-1) -- (2.5,-2.7) -- (4.2,-2.7);
			
			\draw (-0.2,0) -- (1.5,0) -- (1.5,1.7) node at (0.7,0.8) {$\Omega_1$}; 
			\draw (-0.2,-1) -- (1.5,-1) -- (1.5,-2.7) node at (0.7,-1.85) {$\Omega_2$}; 
			\draw (4.2,0) -- (2.5,0) -- (2.5,1.7) node at (3.4,0.8) {$\Omega_3$}; 
			\draw (4.2,-1) -- (2.5,-1) -- (2.5,-2.7) node at (3.4,-1.85) {$\Omega_4$}; 
			
			\draw (1.5,0) node[circle, fill, inner sep = 2pt] (A1) {};
			\draw (1.5,0.75) node[circle, fill, inner sep = 2pt] (A2) {};
			\draw (1.5,1.5) node[circle, fill, inner sep = 2pt] (A3) {};
			\draw (0.75,0) node[circle, fill, inner sep = 2pt] (A4) {};
			\draw (0,0) node[circle, fill, inner sep = 2pt] (A5) {};
			
			\draw (2.5,0) node[circle, fill, inner sep = 2pt] (B1) {};
			\draw (2.5,0.75) node[circle, fill, inner sep = 2pt] (B2) {};
			\draw (2.5,1.5) node[circle, fill, inner sep = 2pt] (B3) {};
			\draw (3.25,0) node[circle, fill, inner sep = 2pt] (B4) {};
			\draw (4,0) node[circle, fill, inner sep = 2pt] (B5) {};
			
			\draw (1.5,-1) node[circle, fill, inner sep = 2pt] (C1) {};
			\draw (1.5,-1.75) node[circle, fill, inner sep = 2pt] (C2) {};
			\draw (1.5,-2.5) node[circle, fill, inner sep = 2pt] (C3) {};
			\draw (0.75,-1) node[circle, fill, inner sep = 2pt] (C4) {};
			\draw (0,-1) node[circle, fill, inner sep = 2pt] (C5) {};
			
			\draw (2.5,-1) node[circle, fill, inner sep = 2pt] (D1) {};
			\draw (2.5,-1.75) node[circle, fill, inner sep = 2pt] (D2) {};
			\draw (2.5,-2.5) node[circle, fill, inner sep = 2pt] (D3) {};
			\draw (3.25,-1) node[circle, fill, inner sep = 2pt] (D4) {};
			\draw (4,-1) node[circle, fill, inner sep = 2pt] (D5) {};
			
			\draw[<->, line width = 1pt, latex-latex]
			(A1) edge (D1) (A1) edge (B1) (A1) edge (C1) (B1) edge (C1) (C1) edge (D1) (B1) edge (D1)
			(A2) edge (B2) (A3) edge (B3)
			(A4) edge (C4) (A5) edge (C5)
			(B4) edge (D4) (B5) edge (D5)
			(C2) edge (D2) (C3) edge (D3);
			\end{tikzpicture}
			\captionof{figure}{Fully redundant\\ (Algorithm B)}
			\label{fig:fully}
		\end{minipage}
	\end{center}
\end{figure}

The original variational problem~\eqref{discreteVarProb} is equivalent to
the following problem. Find $(\underline{w},\underline{\lambda})$ such that
\[
\left(
\begin{array}{cccc}
S& B^\top \\
B			
\end{array}
\right)
\left(
\begin{array}{c}
\underline{w}  \\
\underline{\lambda}  \\
\end{array}
\right)
=
\left(
\begin{array}{c}
\underline{g}  \\
0  \\
\end{array}
\right).
\]
For patches $\Omega^{(k)}$ that do not contribute to the Dirichlet boundary of
the physical domain $\Omega$, the corresponding matrices $A^{(k)}$ and $S^{(k)}$
refer to Poisson problems with pure Neumann boundary conditions, which means that
these matrices are singular. So, in general, the matrix $S$ is singular as well.

To overcome this problem, primal degrees of freedom are introduced. Here, we
have several possibilities:
\begin{itemize}
	\item \emph{Algorithm A (Vertex values):} The space $\widetilde{W}$
	is the subspace of functions where the vertex values agree, i.e.,
	\[
	\widetilde{W} :=
	\left\{ w\in W
	\;:\; 
	\begin{array}{l}
	w^{(k)}(\textbf x) = w^{(\ell)}(\textbf x) \\
	\mbox{for all common vertices $\textbf x$ of all pairs of
		$\Omega^{(k)}$ and $\Omega^{(\ell)}$}
	\end{array}
	\right\}.
	\]	
	The subspace $\widetilde{W}_{\Delta} \subset \widetilde{W}$ satisfies these conditions homogeneously, i.e.,
	$\widetilde{W}_{\Delta}:= \prod_{k=1}^K \widetilde{W}_{\Delta}^{(k)}$ and
	\[
	\widetilde{W}_{\Delta}^{(k)} := \{ w\in W^{(k)} \;:\; w(\textbf x) = 0 \mbox{ for all vertices } \textbf x \mbox{ of all } \Omega^{(k)} \}.
	\] 
	\item \emph{Algorithm B (Edge averages):} The space $\widetilde{W}$
	is the subspace of functions where the averages of the function values over
	the edges agree, i.e.,
	\[
	\widetilde{W} :=
	\left\{ w\in W
	\;:\; \int_{\Gamma^{(k,\ell)}} w^{(k)}(x)\,\mathrm dx
	= \int_{\Gamma^{(k,\ell)}} w^{(\ell)}(x)\,\mathrm dx
	\mbox{ for all edges }\Gamma^{(k,\ell)} \right\}.
	\]	
	The B-spline space $\widetilde{W}_{\Delta}$ satisfies these conditions homogeneously, i.e.,
	$\widetilde{W}_{\Delta}:= \prod_{k=1}^K \widetilde{W}_{\Delta}^{(k)}$ and
	\[
	\widetilde{W}_{\Delta}^{(k)} := \left\{ w\in W^{(k)} \;:\; \int_{\Gamma^{(k,\ell)}} w^{(k)}(x)\,\mathrm dx = 0 \mbox{ for all edges } \Gamma^{(k,\ell)} \right\}.
	\] 
	\item \emph{Algorithm C (Vertex values and edge averages):} We combine the constraints from
	both cases. So, the spaces $\widetilde{W}$ and $\widetilde{W}_{\Delta}^{(k)}$ and
	$\widetilde{W}_{\Delta}$ are the intersections of the corresponding spaces obtained by
	Algorithms~A and B.
\end{itemize}

We introduce matrices $C^{(k)}$ representing the subspace $\widetilde W^{(k)}_\Delta$, i.e.,
we have
\[
C^{(k)} \underline w^{(k)} = 0
\quad \Leftrightarrow \quad
w^{(k)} \in \widetilde{W}_\Delta^{(k)}
\]
for all $w^{(k)} \in W^{(k)}$ with vector representation $\underline w^{(k)}$. The
matrix $C^{(k)}$ is chosen to have full rank, i.e., the number of rows coincides
with the number of primal degrees of freedom per patch. The matrix $C$ is a
block-diagonal matrix containing the blocks $C^{(1)},\ldots,C^{(K)}$.

For every choice of primal degrees of freedom, the space $\widetilde{W}_{\Pi}$ is the
$S$-orthogonal complement of $\widetilde{W}_{\Delta}$ in $\widetilde{W}$, i.e.,
\begin{equation}\label{eq:wtildedef}
\widetilde{W}_{\Pi} := \{ w\in \widetilde{W} \;:\; 
s( w, q ) = 0
\mbox{ for all } q \in \widetilde{W}_{\Delta} \}.
\end{equation}
Let $\psi^{(1)},\ldots,\psi^{(N_\Pi)}$ be a basis of $\widetilde{W}_{\Pi}$. For the computation, one
usually chooses a nodal basis, where the vertex values and/or the edge averages form
the nodal values. The matrix $\Psi$ represents the basis in terms of the basis
for the space $W$, i.e.,
\begin{equation}\label{eq:psidef}
\Psi = \left(
\begin{array}{c}
\underline\psi^{(1)}\\\vdots\\\underline\psi^{(N_\Pi)}
\end{array}
\right).
\end{equation}

Following Ref.~\refcite{MandelDohrmannTezaur:2005a}, the following linear system is an equivalent
rewriting of the original variational problem~\eqref{discreteVarProb}: Find
$(\underline w_\Delta, \underline \mu, \underline w_\Pi, \underline \lambda)$ such that
\[
\left(
\begin{array}{cccc}
S & C^\top &                  & B^\top           \\
C &        &                  &                  \\
&        & \Psi^\top S \Psi & \Psi^\top B^\top \\
B &        & B \Psi           &                  \\
\end{array}
\right)
\left(
\begin{array}{c}
\underline{w}_\Delta  \\
\underline{\mu}  \\
\underline{w}_\Pi  \\
\underline{\lambda}  \\
\end{array}
\right)
=
\left(
\begin{array}{c}
\underline{g}  \\
0  \\
\Psi^\top \underline{g}  \\
0  \\
\end{array}
\right),
\]
where the solution for the original problem is obtained by
$\underline{w} = \underline{w}_\Delta + \Psi \underline{w}_\Pi$.
By reordering and by building the Schur complement, we obtain the following equivalent formulation
\begin{equation}
\label{IETIProblem}
F \; \underline{\lambda} = \underline{d},
\end{equation}
where
\begin{align}
	\label{eq:IETI-matrix}
	F :=  \underbrace{\left(
		\begin{array}{ccc}
			B & 0 & B \Psi
		\end{array}
		\right)
		\left(
		\begin{array}{ccc}
			S & C^\top &  \\ C &   &   \\   &   & \Psi^\top S\Psi
		\end{array}
		\right)^{-1}
		\left(
		\begin{array}{c}
			I  \\ 0 \\ \Psi^\top 
		\end{array}
		\right)}_{\displaystyle F_0:=}
	B^\top
	\quad\mbox{and}\quad
	\underline{d} :=  F_0\, \underline{g}.
\end{align}

The idea of the IETI-DP method is to solve the linear system~\eqref{IETIProblem}
with a preconditioned conjugate gradient (PCG) solver. As preconditioner, we
use the scaled Dirichlet preconditioner $M_{\mathrm{sD}}$, which is given by
\[
M_{\mathrm{sD}} := B D^{-1} S  D^{-1} B^\top ,
\]
where $D \in \mathbb{R}^{N_\Gamma\times N_\Gamma}$ is a diagonal matrix
defined based on the principle of multiplicity scaling. This means that each
coefficient $d_{i,i}$ of $D$ is assigned the number of Lagrange multipliers 
that act on the corresponding basis function, but at least 1, i.e., 
\[
d_{i,i} := \max\Big\{ 1,
\sum_{j=1}^{N_\Gamma} b_{i,j}^2
\Big\},
\]
where $b_{i,j}$ are the coefficients of the matrix $B$.

For the realization of the proposed method, one has to perform the following
steps.
\begin{itemize}
	\item Compute the vectors $\underline g^{(k)}$ according to \eqref{eq:s:def}.
	\item Compute the matrix $\Psi$ according to~\eqref{eq:wtildedef} and~\eqref{eq:psidef}.
	\item Compute $S_\Pi := \Psi^{\top} S \Psi$
	\item Execute a preconditioned conjugate gradient (PCG) solver
	for computing $\underline \lambda$. 
	This requires the computation of the residual and the application of
	the preconditioner. For the computation of the residual
	$\underline{\widehat w}:=F \underline \lambda - \underline d$,
	the following steps are applied:
	\begin{itemize}
		\item Compute $\underline{\widehat q}
		= ((\underline{\widehat q}^{(1)})^\top \cdots (\underline{\widehat q}^{(K)})^\top)^\top
		:= B^\top \underline \lambda - \underline g$.
		\item Solve the linear system
		\begin{equation}\label{eq:algo1}
		\left(
		\begin{array}{cc}
		S^{(k)} & (C^{(k)})^\top \\ C^{(k)}
		\end{array}
		\right)
		\left(
		\begin{array}{c}
		\underline{\widehat w}_\Delta^{(k)} \\ \underline{\widehat \mu}^{(k)}
		\end{array}
		\right)
		=
		\left(
		\begin{array}{c}
		\underline{\widehat q}^{(k)} \\ 0
		\end{array}
		\right)
		\end{equation}
		for all $k=1,\ldots,K$. If the vertex values are chosen as primal degrees of
		freedom (Algorithms A and C), it is possible to get an equivalent formulation by
		eliminating the degrees of freedom corresponding to the vertex values and the
		corresponding Lagrange multipliers.
		\item Solve the linear system
		\begin{equation}\label{eq:algo2}
		S_\Pi \underline{\widehat w}_\Pi = \Psi^\top \underline{\widehat q},
		\end{equation}
		which is usually expected to be small.
		\item The residual is given by
		\begin{equation}\label{eq:algo3}
		\underline{\widehat w} := B \left(
		\begin{array}{ccc}
		\underline{\widehat w}_\Delta^{(1)} \\ \vdots \\ \underline{\widehat w}_\Delta^{(K)}
		\end{array}
		\right)
		+ B \Psi  \underline {\widehat w}_\Pi.
		\end{equation}
	\end{itemize}
	The computation of the preconditioned residual $\underline{\widehat p} := M_{\mathrm{sD}}\,
	\underline{\widehat w}$ only requires matrix-vector multiplications.
	\item To obtain the solution vector $\underline u$, first $\underline w$
	is computed analogously to~\eqref{eq:algo1}, \eqref{eq:algo2} and \eqref{eq:algo3}
	based on $q=((q^{(1)})^\top \cdots (q^{(K)})^\top)^\top
	:=B^\top \underline \lambda$. Then,
	the solution vector $\underline u$ is obtained by~\eqref{eq:inner:def}.
\end{itemize}

\section{Condition number estimate}
\label{sec:4}
In this section, we prove the following condition number estimate.

\begin{theorem}\label{thrm:fin}
	Provided that the IETI-DP solver is set up as outlined in the previous
	sections, the condition number of the preconditioned system satisfies
	\[
		\kappa(M_{\mathrm{sD}} F) \le C\, p \left(1+\log p+\max_{k=1,\ldots,K} \log\frac{H^{(k)}}{h^{(k)}}\right)^2,
	\]
	where $C$ only depends on the constants from the
	Assumptions~\ref{ass:nabla}, \ref{ass:neighbors}, and~\ref{ass:quasiuniform}.
\end{theorem}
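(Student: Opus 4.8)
The plan is to follow the abstract FETI‑DP/BDDC framework of Ref.~\refcite{MandelDohrmannTezaur:2005a}: the condition number $\kappa(M_{\mathrm{sD}}F)$ is bounded by the norm of the \emph{jump operator} $P_D = B_D^\top B$ acting on $\widetilde W$, measured in the $s$‑seminorm, i.e. it suffices to prove an estimate of the form $s(P_D w, P_D w) \le C\, p\,(1+\log p + \max_k \log (H^{(k)}/h^{(k)}))^2\, s(w,w)$ for all $w\in\widetilde W$. Since $s(\cdot,\cdot)$ is the sum of the patch energies of discrete harmonic extensions, and $P_D$ is built from edge‑ and vertex‑localized differences of patch traces scaled by the multiplicity matrix $D$, the whole estimate reduces, patch by patch and edge by edge, to three ingredients on the parameter domain $\widehat\Omega=(0,1)^2$: (i) an edge‑norm bound stating that the restriction of a discrete harmonic spline to one edge, measured in $H^{1/2}_{00}$ (or in the relevant discrete trace norm), is controlled by the patch energy up to the logarithmic factor $(1+\log p + \log(H/h))$; (ii) a stable harmonic extension going the other way, i.e. a spline extension operator from one edge into the patch whose energy is controlled, again up to the same logarithmic factor, by the edge norm of the data — this is exactly the ``bounded extension operator for spline spaces'' advertised in the introduction; and (iii) vertex‑value / edge‑average control: the pointwise value of a spline trace at a vertex, resp. its average over an edge, is bounded by its energy with a factor $\sqrt{p}\,(1+\log p + \log(H/h))$ resp. a mild constant. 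Combining (i)–(iii) with Assumptions~\ref{ass:nabla}, \ref{ass:neighbors}, \ref{ass:quasiuniform} (which convert between $\widehat\Omega$ and $\Omega^{(k)}$, bound the number of neighbours, and give quasi‑uniformity) yields the claimed bound, with the factor $p$ coming from squaring the $\sqrt p$ in the vertex estimate.

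Concretely I would proceed in the following order. First, fix the abstract reduction: recall from Ref.~\refcite{MandelDohrmannTezaur:2005a} that $\kappa(M_{\mathrm{sD}}F)\le \|P_D\|_s$, and reduce $\|P_D\|_s$ to a sum of local contributions, one per interface edge and one per vertex star, using the bounded‑neighbours Assumption~\ref{ass:neighbors} and the multiplicity scaling built into $D$ (here the standard argument that $d_{i,i}^{-1}$ times the number of patches meeting at a node is $O(1)$). Second, pull everything back to $\widehat\Omega$ via $G_k$; Assumption~\ref{ass:nabla} shows that $a^{(k)}(v,v)$ is equivalent, uniformly in $H^{(k)}$, to $|\widehat v|_{H^1(\widehat\Omega)}^2$, so all estimates may be carried out on the reference square with the reference grid size $\widehat h^{(k)}$, and the geometry parameters $H^{(k)}$ drop out of the logarithm except through the ratio $H^{(k)}/h^{(k)} = 1/\widehat h^{(k)}$. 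Third, prove the edge lemma (i): for a tensor‑product spline $\widehat w$ on $\widehat\Omega$ that is discrete harmonic, the trace $\widehat w|_{\widehat E}$ on an edge $\widehat E$ satisfies a discrete $H^{1/2}_{00}$‑type bound $\|\widehat w\|_{*,\widehat E}^2 \le C(1+\log p+\log(1/\widehat h))\,|\widehat w|_{H^1(\widehat\Omega)}^2$; this is the 2D spline analogue of the classical FEM trace/extension estimates and is where the $\log p$ genuinely enters, via the $p$‑dependence of inverse inequalities and of the ``discrete Poincaré on one knot span'' for splines. Fourth, prove the extension lemma (ii) by explicitly constructing a spline extension from $\widehat E$ into $\widehat\Omega$ (e.g. a tensor extension by the first B‑spline in the transverse direction, suitably corrected near the endpoints using cutoff splines) and bounding its $H^1$‑seminorm by the same logarithmically weighted edge norm — the introduction flags this as the new technical contribution for spline spaces. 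Fifth, handle the primal quantities: for Algorithm~A bound $|\widehat w(\widehat{\mathbf x})|^2 \le C\,p\,(1+\log p+\log(1/\widehat h))\,|\widehat w|^2_{H^1}$ using a spline Sobolev‑type inequality in 2D (the $\sqrt p$ is sharp and is the origin of the leading $p$ in the theorem); for Algorithm~B bound the edge average by the edge $L_2$‑norm, which is controlled without the $p$‑factor; for Algorithm~C both hold. Finally, assemble: write $P_D w$ on each edge as a difference of two patch traces, each of which either already matches in its primal value (so the difference lies in the zero‑average/zero‑vertex subspace where a Poincaré‑type inequality with constant $(1+\log p+\log(1/\widehat h))$ applies) or is corrected by a single primal basis function whose energy is bounded by the vertex/average estimate; extend the result back into each neighbouring patch by lemma (ii); sum over the $O(1)$ neighbours and over edges; take the square and collect the logarithmic factors into $(1+\log p+\max_k\log(H^{(k)}/h^{(k)}))^2$.

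The main obstacle is unquestionably the edge estimate (i) together with its counterpart (ii): obtaining the correct $p$‑dependence — a single power of $\log p$, not a power of $p$ — for the trace and discrete‑harmonic‑extension inequalities on spline spaces. The naive route of passing through an equivalent low‑order (FEM) problem, as is standard in the $hp$ spectral‑element literature, destroys $p$‑robustness for splines (the introduction says so explicitly); instead one must work directly with B‑splines and exploit their near‑optimal approximation and stability properties in $p$ (of the kind in Refs.~\refcite{TakacsTakacs:2015,SandeManniSpeelers:2019,Takacs:2018}). Concretely, the delicate points are: a $p$‑explicit inverse inequality relating $\|\widehat w'\|_{L_2}$ on one knot span to $\widehat h^{-1}\|\widehat w\|_{L_2}$ without an extra power of $p$ beyond the logarithm; a $p$‑explicit stable splitting of a spline trace into a ``low‑frequency'' part (handled by a coarse extension) and a ``high‑frequency/oscillatory'' part (handled locally, picking up the $\log$); and a careful endpoint treatment near vertices so that the extension operator is compatible with the $H^{1/2}_{00}$ scaling (the $\log(1/\widehat h)$ term). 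The vertex estimate in (iii), giving the honest $\sqrt p$, also has to be proved sharply, but it is a lower‑dimensional and more standard computation once the right spline Sobolev inequality is in hand. Everything else — the abstract reduction, the change of variables, the neighbour‑counting, the assembly — is routine FETI‑DP bookkeeping.
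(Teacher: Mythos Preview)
Your overall architecture --- the abstract reduction via Ref.~\refcite{MandelDohrmannTezaur:2005a} to a bound on $P_D=B_D^\top B$ in the $S$-norm, the pull-back to $\widehat\Omega$, the edge/vertex decomposition, and the final assembly using Assumptions~\ref{ass:nabla}, \ref{ass:neighbors}, \ref{ass:quasiuniform} --- is the same as the paper's. What differs, and matters, is where you locate the factor~$p$.

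You claim the extension lemma (ii) holds with only the logarithmic weight and that the leading~$p$ is produced by a sharp $\sqrt p$ in the vertex Sobolev inequality (iii). The paper finds exactly the opposite. The pointwise bound (Lemma~\ref{lem:param:point:values}) is
\[
  |\widehat u(x)|^2 \lesssim \bigl(1+\log p+\log\tfrac{1}{\widehat h}\bigr)\,\|\widehat u\|_{H^1(\widehat\Omega)}^2,
\]
with \emph{no} power of $p$ in front; your ``$\sqrt p$ is sharp'' statement is simply false for 2D splines. Conversely, the discrete harmonic extension from the full boundary satisfies (Theorem~\ref{thrm:discrharm:ext})
\[
  |\mathcal H_h w|_{H^1(\widehat\Omega)}^2 \lesssim p\,|w|_{H^{1/2}(\partial\widehat\Omega)}^2,
\]
and this is precisely where the leading~$p$ enters the final estimate. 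The naive tensor extension you sketch (``first B-spline in the transverse direction'') already shows why: if $\theta$ is that transverse profile on one knot span of width $\widehat h$, then $|\theta|_{H^1(0,1)}^2\eqsim p\,\widehat h^{-1}$ while $\|\theta\|_{L_2(0,1)}^2\eqsim p^{-1}\widehat h$, so the cross-term in $|\,\theta\otimes w\,|_{H^1}^2$ produces $p\,\widehat h^{-1}\|w\|_{L_2}^2$. The paper's elaborate multilevel construction (Section~\ref{sec:4:1}) is designed to tame the $\widehat h^{-1}$ via an $\ell^2$-sum over scales, but the single factor~$p$ survives; a one-edge $H^{1/2}_{00}$ extension with only a logarithmic constant is not available by this route and your proposal gives no mechanism to avoid it.

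In place of your $H^{1/2}_{00}$ face lemma~(i), the paper uses a ``tearing'' estimate (Lemma~\ref{lem:tearing}) that splits $|w|_{H^{1/2}(\partial\widehat\Omega)}^2$ into $\sum_\ell |w|_{H^{1/2}(\widehat\Gamma^{(\ell)})}^2$ plus $\Lambda\,|w|_{L_\infty^0(\widehat\Gamma^{(\ell)})}^2$ terms; the $L_\infty^0$ pieces are then controlled by the pointwise lemma. The assembly thus reads: extension ($p$) $\times$ tearing ($\Lambda$) $\times$ $L_\infty$/pointwise ($\Lambda$) $=p\Lambda^2$. Your accounting would give the same total only by accident; as written, your step~(ii) is the gap.
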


Note that, for a fixed choice of the spline degree $p$, this estimate behaves
essentially the same as one would expect for standard low-order finite elements.

\begin{notation}
	We use the notation $a \lesssim b$ if there is a constant
	$c>0$ that only depends on the constants from the
	Assumptions~\ref{ass:nabla}, \ref{ass:neighbors}, and~\ref{ass:quasiuniform}
	such that $a\le cb$. Moreover, we write $a\eqsim b$ if $a\lesssim b\lesssim a$.
\end{notation}

Following the standard path,
we develop an analysis in the $H^{1/2}$-seminorm on the skeleton. For this, we need
to know that the $H^1$-seminorm of the discrete harmonic extension is bounded by
the $H^{1/2}$-seminorm on the boundary. Since such a result has not been worked
out for spline spaces, we give an estimate in Section~\ref{sec:4:1}. In Section~\ref{sec:4:2},
we give a $p$-robust embedding statement and in Section~\ref{sec:4:3}, we give
a lemma that allows to tear the $H^{1/2}$-seminorm apart. In
Section~\ref{sec:4:fin}, we use these results to give a condition number bound.

\subsection{Estimates for the discrete harmonic extension}\label{sec:4:1}

The analysis of this subsection follows Ref.~\refcite{Nepomnyaschikh:1995}, where
the same technique has been used to estimate the discrete harmonic
extension in the context of Finite Element methods. In this subsection, we only consider
one single patch at a time, i.e., $k$ is fixed. Let
\[
\mathcal V := S[p,\Xi^{(k,1)}] \otimes S[p,\Xi^{(k,2)}]
\quad\mbox{and}\quad
\mathcal W := \{ v|_{\partial \widehat\Omega}\,:\,
v\in \mathcal V\}
\]
be the corresponding function space and its restriction to $\partial \widehat\Omega$,
respectively. Note that $\widehat{V}^{(k)} = \{ v \in \mathcal V \;:\;
v|_{\widehat \Gamma_D^{(k)}}=0\}$.

Let $Z^{(k,\delta)}=(\zeta_1^{(k,\delta)},\ldots,\zeta_{N_Z^{(k,\delta)}}^{(k,\delta)})$
be the vectors of breakpoints associated to the knot 
vectors $\Xi^{(k,\delta)}$, for $\delta=1,2$, respectively. Let $\widehat{h}$ be
the corresponding grid size, using Assumption~\ref{ass:quasiuniform}, we have
\[
\widehat{h} \lesssim
\zeta^{(k,\delta)}_{i+1}-\zeta^{(k,\delta)}_{i} \le \widehat{h}
\]
for all $i=1,\ldots,N_Z^{(k,\delta)}-1$ and all $\delta=1,2$. As a next step, we introduce
a hierarchy of nested grids
\[
Z^{(k,\delta,0)} := (0,1), \quad Z^{(k,\delta,1)}, \quad \cdots,
\quad Z^{(k,\delta,L)} := Z^{(k,\delta)}
\]
with $Z^{(k,\delta,\ell)} = (\zeta_1^{(k,\delta,\ell)},\ldots,\zeta_{N_Z^{(k,\delta,\ell)}}^{(k,\delta,\ell)})$
such that $\zeta^{(k,\delta,\ell)}_{1}=0$, $\zeta^{(k,\delta,\ell)}_{N_Z^{(k,\delta,\ell)}}=1$
and
\begin{equation}\label{eq:gridsize}
\widehat{h}_\ell  \lesssim
\zeta^{(k,\delta,\ell)}_{i+1}-\zeta^{(k,\delta,\ell)}_{i} \le  \widehat{h}_\ell,
\quad\mbox{where}\quad
\widehat{h}_\ell:=4^{L-\ell}\, \widehat{h}
\end{equation}
for all $i=1,\ldots,N_Z^{(k,\delta,\ell)}-1$, all $\ell=1,\ldots,L$
and all $\delta=1,2$. The following Lemma guarantees the existence of such grids.
\begin{lemma}\label{lem:coarse}
	Let $Z := (\zeta_1,\ldots,\zeta_{N_Z})$ with $\zeta_1=0$ and $\zeta_{N_Z}=1$
	be a given vector of breakpoints with grid
	size $h:=\max_{i=1,\ldots,{N_Z}-1} \zeta_{i+1} - \zeta_{i}$.
	For all $\widetilde h \ge 3h$, there is a vector
	of breakpoints $\widetilde{Z}
	:= (\widetilde{\zeta}_0,\ldots,\widetilde{\zeta}_{N_{\widetilde Z}})$ such that
	\begin{equation}\label{eq:z}
	\min\{1,\tfrac13\widetilde h\}
	\le
	\widetilde{\zeta}_{i+1} - \widetilde{\zeta}_{i}
	\le
	\widetilde h
	\quad\mbox{for all}\quad
	i=1,\ldots,N_{\widetilde Z}-1.
	\end{equation}
\end{lemma}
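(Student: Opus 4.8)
The plan is to prove Lemma~\ref{lem:coarse} by an explicit greedy construction of the coarser breakpoint vector $\widetilde Z$, rather than by any abstract argument. Given the fine breakpoints $Z=(\zeta_1,\ldots,\zeta_{N_Z})$ with $\zeta_1=0$, $\zeta_{N_Z}=1$ and a target grid size $\widetilde h\ge 3h$, I would build $\widetilde Z$ as a sublist of $Z$: starting from $\widetilde\zeta_1:=0$, I repeatedly choose the next coarse breakpoint to be the largest fine breakpoint $\zeta_j$ that still lies within distance $\widetilde h$ of the current coarse breakpoint (but strictly to its right). This guarantees the upper bound $\widetilde\zeta_{i+1}-\widetilde\zeta_i\le\widetilde h$ automatically. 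For the lower bound, the key observation is that the \emph{next} fine breakpoint after the chosen $\zeta_j$ is already more than $\widetilde h$ away, so $\zeta_j$ itself is at least $\widetilde h - h \ge \widetilde h - \tfrac13\widetilde h = \tfrac23\widetilde h \ge \tfrac13\widetilde h$ away from the current coarse point — here I use $h\le\tfrac13\widetilde h$ from the hypothesis. Thus each interior step produces a gap in $[\tfrac13\widetilde h,\widetilde h]$.

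The one subtlety is the treatment of the \textbf{last} interval, i.e. the step that reaches $\widetilde\zeta_{N_{\widetilde Z}}=1$. When the greedy procedure first selects a coarse breakpoint $\widetilde\zeta_i$ with $1-\widetilde\zeta_i\le\widetilde h$, we simply set $\widetilde\zeta_{i+1}:=1$ and stop. The resulting final gap $1-\widetilde\zeta_i$ satisfies the upper bound by construction, but it could a priori be small — possibly even smaller than $\tfrac13\widetilde h$. This is precisely why the lemma states the lower bound as $\min\{1,\tfrac13\widetilde h\}$ rather than $\tfrac13\widetilde h$: I would handle it by a short case distinction. If $\widetilde h\ge 3$, then $\widetilde Z=(0,1)$ works trivially since $1\le\widetilde h$ and the lower bound is $\min\{1,\tfrac13\widetilde h\}=1$. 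If $\widetilde h<3$, then on the last step the previous coarse breakpoint $\widetilde\zeta_i$ satisfies $\widetilde\zeta_i>1-\widetilde h$ but, because it was chosen greedily, its predecessor step could not already reach within $\widetilde h$ of $1$ — unless $\widetilde\zeta_i=0$, i.e. the whole interval is one coarse cell, which again forces $1\le\widetilde h$ and is covered by the $\min$. In the remaining generic case one shows $\widetilde\zeta_i\le 1-\min\{1,\tfrac13\widetilde h\}$ directly, or alternatively one can \emph{redistribute}: merge the last two coarse breakpoints' interval if the tail is too short, splitting $[\widetilde\zeta_{i-1},1]$ (which has length at most $2\widetilde h$) into two pieces each of length in $[\widetilde h/3,\widetilde h]$ — possible because $2\widetilde h \le 2\widetilde h$ and each half is $\ge \widetilde h \ge \widetilde h/3$ when the total is $\ge \tfrac23\widetilde h$, which holds once we are past the first interval.

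I expect the \textbf{main obstacle} to be exactly this boundary bookkeeping at the right endpoint: making the greedy argument rigorous requires carefully tracking that at every non-terminal step the newly chosen fine breakpoint is followed by a fine breakpoint lying beyond the $\widetilde h$-window (which is what forces the gap to be large), and separately arguing that the terminal step either falls into the $\min\{1,\cdot\}$ regime or can be repaired by the redistribution above. The interior estimate $\widetilde h - h\ge\tfrac23\widetilde h$ is immediate from $\widetilde h\ge 3h$, so the bulk of the work is purely combinatorial and the only genuine care needed is the last-interval case analysis. Once the construction is written down, I would verify \eqref{eq:z} by induction on the index $i$ of the coarse breakpoints, which is routine. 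This lemma then feeds directly into the dyadic grid hierarchy \eqref{eq:gridsize} used in Section~\ref{sec:4:1}, where one applies it repeatedly with $\widetilde h = 4^{L-\ell}\widehat h$ and $h$ being the grid size of the already-constructed level $\ell+1$, noting that the ratio $4$ comfortably exceeds the required factor $3$.
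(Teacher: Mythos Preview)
Your greedy idea is exactly the one the paper uses, but your choice of window size $\widetilde h$ (instead of the paper's $\tfrac23\widetilde h$) makes the endpoint repair genuinely harder, and your sketch of that repair does not go through as written. First, the option ``show $\widetilde\zeta_i\le 1-\min\{1,\tfrac13\widetilde h\}$ directly'' is simply false for a window of size $\widetilde h$: take $h=0.1$, $\widetilde h=0.9$, and the uniform fine grid $0,0.1,\ldots,1$; the greedy step from $0$ selects $\widetilde\zeta_1=0.9$, leaving a final gap $0.1<\tfrac13\widetilde h=0.3$. Second, your ``redistribute'' option can be made to work, but your stated justification (``each half is $\ge\widetilde h\ge\widetilde h/3$ when the total is $\ge\tfrac23\widetilde h$'') is incoherent, and more importantly you never address the constraint that the new split point must itself be a fine breakpoint of $Z$ --- which you need because you explicitly build $\widetilde Z$ as a sublist of $Z$ so that the resulting grid hierarchy is nested. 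One can check that the admissible window for such a split point has length $L-\tfrac23\widetilde h>\tfrac13\widetilde h\ge h$ (using that the merged length $L$ exceeds $\widetilde h$, since the greedy did not terminate one step earlier), so a fine breakpoint is indeed available; but this argument is missing from your proposal.

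The paper sidesteps all of this by taking the greedy window to be $\tfrac23\widetilde h$. Then every non-terminal gap lies in $[\tfrac13\widetilde h,\tfrac23\widetilde h]$, and if the final gap is shorter than $\tfrac13\widetilde h$ one simply \emph{drops} the penultimate coarse point: the merged gap is at most $\tfrac23\widetilde h+\tfrac13\widetilde h=\widetilde h$ and at least $\tfrac13\widetilde h$, with no need to find a new split point. This is the cleaner route, and I recommend you adopt it.
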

\begin{proof}
	Let $\widetilde h \ge 3h$ be arbitrary but fixed.
	Let $\widehat\zeta_0:=0$ and
	\begin{equation}\label{eq:zetastardef}
	\widehat\zeta_i
	:=
	\max \{ \zeta_j \;:\; \zeta_j \le \widehat\zeta_{i-1} + \tfrac 23\widetilde h \}
	\quad \mbox{for}\quad i=1,2,\ldots
	\end{equation}
	Let $N_{\widehat Z}$ be the smallest index such that $\widehat\zeta_{N_{\widehat Z}}=1$. Define
	\begin{equation}\label{eq:zetatildedef}
	\widetilde Z :=
	(\widetilde{\zeta}_1,\ldots,\widetilde{\zeta}_{N_{\widetilde Z}})
	:= \begin{cases}
	(\widehat\zeta_1,\ldots,\widehat\zeta_{N_{\widehat Z}})
	& \mbox{if } \widehat\zeta_{N_{\widehat Z}} - \widehat\zeta_{N_{\widehat Z}-1} \ge \min\{1, \tfrac13\widetilde h\} \\
	(\widehat\zeta_1,\ldots,\widehat\zeta_{N_{\widehat Z}-2},\widehat\zeta_{N_{\widehat Z}})
	& \mbox{otherwise.} \\
	\end{cases}
	\end{equation}
	Here, we take the minimum of $1$ and $\tfrac12\widetilde h$ in the first case to make this definition
	to be correct also for the case $N_{\widehat Z}=1$.
	From~\eqref{eq:zetastardef}, we immediately obtain that
	$\widehat\zeta_{i+1} - \widehat\zeta_i \le \tfrac23\widetilde h$
	for all $i=1,\ldots,N_{\widehat Z}$. From this
	and the construction in~\eqref{eq:zetatildedef}, we obtain the upper
	bound in~\eqref{eq:z}.
	
	Observe that $\widehat\zeta_{i+1} - \widehat\zeta_i \ge \tfrac23\widetilde h-h \ge \tfrac13\widetilde h$
	for all $i=1,\ldots,N_{\widehat Z}-1$. This follows from the definition in~\eqref{eq:zetastardef}
	in combination with the fact that the grid size of the original grid is $h$.
	This immediately implies $\widetilde{\zeta}_{i+1} - \widetilde{\zeta}_i \ge \tfrac13\widetilde h$
	for all $i=1,\ldots,N_{\widehat Z}-1$. If $N_{\widetilde Z} = N_{\widehat Z}-1$,
	this finishes the proof.	
	If $N_{\widetilde Z} = N_{\widehat Z}$, we have by assumption that
	$\widetilde{\zeta}_{N_{\widetilde Z}} - \widetilde{\zeta}_{N_{\widetilde Z}-1}
	\ge \min\{1, \tfrac13 \widetilde h\}$,
	which finishes the proof also in this case.
\end{proof}

For each of this vectors of breakpoints, we introduce corresponding
$p$-open knot vectors without repeated inner knots:
\[
\Xi^{(k,\delta,\ell)} := (\underbrace{\zeta_1^{(k,\delta,\ell)},\ldots,\zeta_1^{(k,\delta,\ell)}}_{\displaystyle p+1\mbox{ times}},
\zeta_2^{(k,\delta,\ell)},
\ldots,
\zeta_{N_Z^{(k,\delta,\ell)}-1}^{(k,\delta,\ell)},
\underbrace{\zeta_{N_Z^{(k,\delta,\ell)}}^{(k,\delta,\ell)}, \ldots, \zeta_{N_Z^{(k,\delta,\ell)}}^{(k,\delta,\ell)}}_{\displaystyle p+1 \mbox{ times}}).
\]
Based on these grids, we introduce coarse-grid spline spaces of maximum smoothness by
\[
\mathcal W^{(\ell)} := \left\{
w \in (S[p,\Xi^{(k,1,\ell)}]\otimes S[p,\Xi^{(k,2,\ell)}])|_{\partial\widehat\Omega}
\,:\, w\circ\gamma \in C^{p-1}(-\infty,\infty)
\right\}
\]
where $\gamma:(-\infty,\infty) \rightarrow \partial\widehat\Omega$ is given by
\begin{equation}\label{eq:unroll}
\gamma(t) := \begin{cases}
(0,t)     & \mbox{ if } t\in [0,1) + 4\mathbb Z \\
(t-1,1)   & \mbox{ if } t\in [1,2) + 4\mathbb Z\\
(1,3-t)   & \mbox{ if } t\in [2,3) + 4\mathbb Z\\
(4-t,0)   & \mbox{ if } t\in [3,4) + 4\mathbb Z\\
\end{cases},
\end{equation}
where $\mathbb Z$ is the set of integers. Note that the condition
$w\circ\gamma \in C^{p-1}(-\infty,\infty)$ introduces special
smoothness requirements on the vertices of $\widehat \Omega$.
For convenience, we define
$\mathcal W^{(0)}:=\{0\}$ and $\mathcal W^{(L+1)} := \mathcal W$
and observe that the spaces $\mathcal W^{(\ell)}$ are nested, i.e.,
\[
\mathcal W^{(0)} \subset
\mathcal W^{(1)} \subset
\cdots \subset
\mathcal W^{(L)} \subset
\mathcal W^{(L+1)}.
\]

Since the spaces $\mathcal W^{(\ell)}$ for all $\ell=1,\ldots,L$
are periodic and of maximum smoothness, we have a robust inverse estimate.
The space $\mathcal W^{(L+1)}$ is not a spline space of maximum smoothness,
thus only a standard inverse estimate for piecewise polynomial functions can be used.
\begin{lemma}\label{lem:inverse:q}
	The following $H^1-L_2$ and $H^2-H^1$-inverse estimates hold:
	\begin{itemize}
		\item
		$|w|_{H^1(\partial\widehat\Omega)}^2
		\lesssim
		p^4\, \widehat h_{L+1}^{-2} \|w\|_{L_2(\partial\widehat\Omega)}^2$
		for $w\in \mathcal W^{(L+1)}$.
		\item
		$|w|_{H^1(\partial\widehat\Omega)}^2
		\lesssim
		\widehat h_{\ell}^{-2} \|w\|_{L_2(\partial\widehat\Omega)}^2$
		for $w\in \mathcal W^{(\ell)}$ with $\ell=1,\ldots,L$.
	\end{itemize}
\end{lemma}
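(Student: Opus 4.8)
The plan is to reduce both estimates to one-dimensional inverse inequalities on the univariate spline spaces that appear after unrolling $\partial\widehat\Omega$ via the map $\gamma$ from~\eqref{eq:unroll}. First I would observe that a function $w\in\mathcal W^{(\ell)}$, when composed with $\gamma$, becomes a $4$-periodic univariate function that is piecewise polynomial of degree $p$ on a grid whose element sizes are, by~\eqref{eq:gridsize}, comparable to $\widehat h_\ell$ on $[0,4]$ (each of the four edges of $\partial\widehat\Omega$ carries one of the grids $Z^{(k,\delta,\ell)}$). Since $|w|_{H^1(\partial\widehat\Omega)}^2$ and $\|w\|_{L_2(\partial\widehat\Omega)}^2$ are, up to the Jacobian of $\gamma$ which is $1$ a.e., exactly the corresponding seminorm and norm of $w\circ\gamma$ over one period, it suffices to prove: for a univariate piecewise-polynomial function $v$ of degree $p$ on a quasi-uniform grid of size $H$, one has $|v|_{H^1}^2\lesssim p^4 H^{-2}\|v\|_{L_2}^2$ in general, and $|v|_{H^1}^2\lesssim H^{-2}\|v\|_{L_2}^2$ when in addition $v\in C^{p-1}$ (maximum smoothness).

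For the second (sharper) bound, which applies to $\mathcal W^{(\ell)}$, $\ell=1,\dots,L$, I would invoke the known $p$-robust inverse inequality for periodic splines of maximum smoothness: for a periodic spline space of degree $p$ and maximum smoothness on a quasi-uniform grid of size $H$, the $H^1$-seminorm is bounded by $C H^{-1}$ times the $L_2$-norm with $C$ independent of $p$; this is precisely the type of estimate established for maximum-smoothness spline spaces in the $p$-robust literature cited in the introduction (e.g. Refs.~\refcite{TakacsTakacs:2015,SandeManniSpeelers:2019}), and the periodicity here is exactly what makes the constant $p$-independent — the smoothness coupling across the vertices of $\widehat\Omega$, encoded in the condition $w\circ\gamma\in C^{p-1}$, is what allows the periodic (circulant) structure to be exploited. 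I would cite the relevant inverse estimate and note that quasi-uniformity of the grids (Assumption~\ref{ass:quasiuniform}), together with~\eqref{eq:gridsize}, gives the factor $\widehat h_\ell^{-2}$.

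For the first bound, on $\mathcal W^{(L+1)}=\mathcal W$, the space is only $C^0$-conforming globally (the knot vectors $\Xi^{(k,\delta)}$ may have interior knots of multiplicity up to $p$), so I would fall back on a classical element-wise polynomial inverse estimate: on a single element of size $\asymp\widehat h_{L+1}$, a polynomial of degree $p$ satisfies $|q|_{H^1}^2\lesssim p^4 \widehat h_{L+1}^{-2}\|q\|_{L_2}^2$ (the sharp Markov-type constant on an interval scales like $p^4 H^{-2}$), summing over elements gives the claim. The main obstacle — or rather the point requiring care — is the treatment of the four vertices of $\widehat\Omega$ in the first bound: near a vertex, $w\circ\gamma$ is continuous but the grid transitions from the last element of one edge's grid to the first element of the adjacent edge's grid, so one must check that the element-wise estimate is genuinely local and no smoothness at the vertex is needed (it is not, since the $C^0$ inverse estimate is purely element-by-element). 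For the second bound the vertices are the delicate point in the opposite direction: one must be sure the maximum-smoothness inverse estimate is being applied to a space that really is $C^{p-1}$ across the vertices, which is guaranteed by construction of $\mathcal W^{(\ell)}$ via the condition $w\circ\gamma\in C^{p-1}(-\infty,\infty)$. I would conclude by assembling the four edge contributions and absorbing the geometry-independent constants into $\lesssim$.
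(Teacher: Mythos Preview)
Your proposal is correct and follows essentially the same route as the paper: unroll $\partial\widehat\Omega$ via $\gamma$, invoke the element-wise Markov-type inverse estimate (Theorem~4.76 in Ref.~\refcite{Schwab}) for $\ell=L+1$, and the $p$-robust inverse inequality for maximum-smoothness periodic splines (Theorem~6.1 in Ref.~\refcite{TakacsTakacs:2015}) for $\ell=1,\dots,L$, with~\eqref{eq:gridsize} supplying the grid-size factor. Your extra remarks about the treatment of the vertices are accurate and make explicit what the paper leaves implicit.
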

\begin{proof}
	This follows immediately from definition~\eqref{eq:unroll}, the observation that
	$|w|_{H^1(\partial\widehat\Omega)} = |w\circ \gamma|_{H^1(0,4)}$
	and $\|w\|_{L_2(\partial\widehat\Omega)} = \|w\circ \gamma\|_{L_2(0,4)}$,
	the inverse inequalities given in Theorem~4.76, eq. (4.6.5) in
	Ref.~\refcite{Schwab} (for $\ell=L+1$)
	and Theorem~6.1 in Ref.~\refcite{TakacsTakacs:2015} (for $\ell=1,\ldots,L$) and~\eqref{eq:gridsize}.
\end{proof}

The following Lemma shows that there is also an $H^1-H^{1/2}$-inverse estimate.

\begin{lemma}\label{lem:inverse:half}
	The $H^1-H^{1/2}$-inverse estimate
	$|w|_{H^1(\partial\widehat\Omega)}^2
	\lesssim p^2\, \widehat h_{L+1}^{-1} |w|_{H^{1/2}(\partial\widehat\Omega)}^2$ 
	holds for all $w\in \mathcal{W}^{(L+1)}$.
\end{lemma}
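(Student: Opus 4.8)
The plan is to deduce this $H^1$--$H^{1/2}$ inverse estimate from the $H^1$--$L_2$ inverse estimate of Lemma~\ref{lem:inverse:q} by a real-interpolation argument: since $H^{1/2}$ lies ``half-way'' between $L_2$ and $H^1$, only the square root of the bound in Lemma~\ref{lem:inverse:q} should be lost, which is precisely the gap between $p^4\widehat h_{L+1}^{-2}$ and $p^2\widehat h_{L+1}^{-1}$. First I would reduce to mean-free $w$: because the constants belong to $\mathcal W^{(L+1)}$, replacing $w$ by $w-\bar w$ (with $\bar w$ its average over $\partial\widehat\Omega$) changes neither seminorm, and for mean-free functions the Poincar\'e inequality $\|w\|_{L_2(\partial\widehat\Omega)}\lesssim|w|_{H^{1/2}(\partial\widehat\Omega)}$ on the closed curve $\partial\widehat\Omega$ lets us pass freely between full norms and seminorms. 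Second, I would use the classical fact that $[L_2(\partial\widehat\Omega),H^1(\partial\widehat\Omega)]_{1/2}$ carries a norm equivalent to $\|\cdot\|_{H^{1/2}(\partial\widehat\Omega)}$; via the unrolling $\gamma$ from~\eqref{eq:unroll} this is the standard identity on the circle.

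For the core step, let $P$ denote the $L_2(\partial\widehat\Omega)$-orthogonal projection onto $\mathcal W^{(L+1)}$ and let $K(t,w):=\inf_{w=a+b}\big(\|a\|_{L_2(\partial\widehat\Omega)}+t\,\|b\|_{H^1(\partial\widehat\Omega)}\big)$ be the associated K-functional. For $w\in\mathcal W^{(L+1)}$ and any splitting $w=a+b$ we have $w=Pa+Pb$, hence
\[
	\|w\|_{H^1(\partial\widehat\Omega)}\le\|Pa\|_{H^1(\partial\widehat\Omega)}+\|Pb\|_{H^1(\partial\widehat\Omega)}
	\lesssim p^2\widehat h_{L+1}^{-1}\|a\|_{L_2(\partial\widehat\Omega)}+\|b\|_{H^1(\partial\widehat\Omega)},
\]
where the first term is bounded with the first bullet of Lemma~\ref{lem:inverse:q} (in the form $\|Pa\|_{H^1}\lesssim p^2\widehat h_{L+1}^{-1}\|Pa\|_{L_2}\le p^2\widehat h_{L+1}^{-1}\|a\|_{L_2}$) and the second with the $H^1$-stability of $P$. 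Taking the infimum over all splittings yields $\|w\|_{H^1(\partial\widehat\Omega)}\lesssim p^2\widehat h_{L+1}^{-1}\,K(\tau,w)$ with $\tau:=\widehat h_{L+1}/p^2$. Since $t\mapsto K(t,w)$ is nondecreasing, $K(\tau,w)^2\le 2\tau\int_\tau^{2\tau}t^{-2}K(t,w)^2\,\mathrm dt\le 2\tau\int_0^\infty t^{-2}K(t,w)^2\,\mathrm dt\lesssim\tau\,\|w\|_{H^{1/2}(\partial\widehat\Omega)}^2$. Combining the two estimates gives $\|w\|_{H^1(\partial\widehat\Omega)}^2\lesssim p^4\widehat h_{L+1}^{-2}\,\tau\,\|w\|_{H^{1/2}(\partial\widehat\Omega)}^2=p^2\widehat h_{L+1}^{-1}\,\|w\|_{H^{1/2}(\partial\widehat\Omega)}^2$, and together with the mean-free reduction this is the assertion.

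The hard part will be the $H^1$-stability of the projection onto $\mathcal W^{(L+1)}$ with a constant independent of $p$ and $h$, i.e., $\|Pv\|_{H^1(\partial\widehat\Omega)}\lesssim\|v\|_{H^1(\partial\widehat\Omega)}$; equivalently, one needs the interpolation norm of the subspace $\mathcal W^{(L+1)}$ to be equivalent to the restriction of $\|\cdot\|_{H^{1/2}(\partial\widehat\Omega)}$. This cannot come from Lemma~\ref{lem:inverse:q} alone --- combining an inverse estimate with a standard $p$-robust approximation estimate only yields $\|Pv\|_{H^1}\lesssim p\,\|v\|_{H^1}$, which would degrade the final bound to $p^3\widehat h_{L+1}^{-1}$. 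One must instead invoke the $p$-robust spline approximation theory: either the $p$-robust $H^1$-stability of the $L_2$-projection onto spline spaces on quasi-uniform meshes, or an explicitly constructed quasi-interpolant on $\mathcal W^{(L+1)}$ that is $p$-robustly stable in both $L_2$ and $H^1$, using that $\mathcal W^{(L+1)}$ is a tensor-product spline space of degree $p$ with only $C^0$ matching at the vertices of $\widehat\Omega$. Granting such a stability statement, the remaining steps are routine.
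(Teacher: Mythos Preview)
Your interpolation strategy is sound in spirit, but the gap you yourself flag is real: the $p$-robust $H^1$-stability of the $L_2$-projection onto $\mathcal W^{(L+1)}$ is not a routine fact here. The space $\mathcal W^{(L+1)}=\mathcal W$ carries the original knot vectors with arbitrary inner multiplicities $m_i\in\{1,\dots,p\}$ and is only $C^0$ at the four corners, so the $p$-robust stability results for maximum-smoothness splines do not apply directly; and your fallback (inverse estimate plus approximation) only gives $\|Pv\|_{H^1}\lesssim p\,\|v\|_{H^1}$, which, as you note, costs an extra factor of $p$. Without a proof of this stability the argument is incomplete.

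The paper bypasses this difficulty with a much shorter argument: instead of placing $H^{1/2}$ between $L_2$ and $H^1$, it uses the reiteration theorem to place $H^1$ between $H^{1/2}$ and $H^2$, i.e.\ $\|w\|_{H^1(\partial\widehat\Omega)}\eqsim\|w\|_{[H^{1/2}(\partial\widehat\Omega),H^2(\partial\widehat\Omega)]_{1/3}}$, which immediately yields the multiplicative inequality $\|w\|_{H^1}^3\lesssim\|w\|_{H^{1/2}}^2\|w\|_{H^2}$ for \emph{all} $w$, with no projection involved. One then only needs the $H^2$--$H^1$ inverse estimate $\|w\|_{H^2}\lesssim p^2\widehat h_{L+1}^{-1}\|w\|_{H^1}$ for the spline $w$ itself, and this follows exactly as in Lemma~\ref{lem:inverse:q} since the derivative of a spline is again a (piecewise polynomial) spline. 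Substituting and cancelling one factor of $\|w\|_{H^1}$ gives $\|w\|_{H^1}^2\lesssim p^2\widehat h_{L+1}^{-1}\|w\|_{H^{1/2}}^2$, and a Poincar\'e argument converts norms to seminorms. The moral is that shifting the interpolation endpoints upward to $H^{1/2}$ and $H^2$ trades the projection-stability obstacle for a straightforward inverse estimate on $w$ itself.
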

\begin{proof}
	Using the reiteration theorem and the fact that the fractional order Sobolev spaces
	coincide with the corresponding interpolation spaces,
	cf. Theorems~7.21 and 7.31 in Ref.~\refcite{AdamsFournier:2003},
	we obtain that $H^1$ is the interpolation between the Sobolev spaces $H^{1/2}$ and $H^2$,
	i.e.,
	\[
	\|w\|_{H^1(\partial\widehat\Omega)} \eqsim \|w\|_{[H^{1/2}(\partial\widehat\Omega),H^2(\partial\widehat\Omega)]_{1/3}}.
	\]
	Thus, we obtain
	\[
	\|w\|_{H^1(\partial\widehat\Omega)}^3 \lesssim
	\|w\|_{H^{1/2}(\partial\widehat\Omega)}^2
	\|w\|_{H^2(\partial\widehat\Omega)}.
	\]
	Since the derivative of a spline is again a spline, we obtain analogously
	to the proof of Lemma~\ref{lem:inverse:q} that
	$\|w\|_{H^2(\partial\widehat\Omega)} \lesssim p^2 \widehat h_{L+1}^{-1} \|w\|_{H^1(\partial\widehat\Omega)}$.
	This yields the estimate
	$
	\|w\|_{H^1(\partial\widehat\Omega)}^2 \lesssim p^2 \widehat h_{L+1}^{-1}
	\|w\|_{H^{1/2}(\partial\widehat\Omega)}^2
	$. The Poincar\'e inequality finishes the proof.
\end{proof}

Let $Q_\ell$ be the $L_2$-orthogonal projector into $\mathcal W^{(\ell)}$
and let $\Pi_\ell$ be the $H^1$-orthogonal
projector into $\mathcal W^{(\ell)}$, minimizing the distance in the norms
$\|\cdot\|_{L_2(\partial\widehat\Omega)}^2$ and
$\|\cdot\|_{H^1(\partial\widehat\Omega)}^2$, respectively.
For these projectors, the following error estimates are satisfied.

\begin{lemma}\label{lem:approx:q}
	The estimate
	$\| (I-Q_\ell) w \|_{L_2(\partial\widehat\Omega)}^2 \lesssim \widehat h_\ell
	\|w\|_{H^{1/2}(\partial\widehat\Omega)}^2$ holds for all functions
	$w\in H^1(\partial\widehat\Omega)$ and all $\ell=0,\ldots,L+1$.
\end{lemma}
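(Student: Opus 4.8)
The plan is to deduce the estimate from the two endpoint cases --- the trivial $L_2$-bound and an $H^1$-approximation bound --- by interpolation, exploiting that $H^{1/2}(\partial\widehat\Omega)$ coincides, up to equivalent norms, with the interpolation space $[L_2(\partial\widehat\Omega),H^1(\partial\widehat\Omega)]_{1/2}$; this is the same identification (fractional Sobolev spaces $=$ interpolation spaces) that was already used in the proof of Lemma~\ref{lem:inverse:half}, see Ref.~\refcite{AdamsFournier:2003}. The only level-dependent ingredient is an $L_2$-error bound for $Q_\ell$ with the correct power of $\widehat h_\ell$, and for this I would first treat the levels $\ell=1,\ldots,L$, where $\mathcal W^{(\ell)}$ is a spline space of maximum smoothness. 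Unrolling the boundary by $\gamma$ from~\eqref{eq:unroll}, one has $\|v\|_{L_2(\partial\widehat\Omega)}=\|v\circ\gamma\|_{L_2(0,4)}$ and, because $\gamma$ is piecewise an isometry, $Q_\ell$ corresponds to the $L_2$-orthogonal projector onto the periodic maximum-smoothness spline space $\mathcal W^{(\ell)}\circ\gamma$ on $(0,4)$, whose grid is quasi-uniform of size $\eqsim\widehat h_\ell$ by~\eqref{eq:gridsize}; the $C^{p-1}$-condition in the definition of $\mathcal W^{(\ell)}$ is exactly what turns the four one-dimensional spline spaces on the sides of $\widehat\Omega$ into one periodic maximum-smoothness space.

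For these levels I would establish the two endpoint bounds for the operator $I-Q_\ell$. As $Q_\ell$ is an $L_2$-orthogonal projector, $I-Q_\ell$ is a contraction on $L_2(\partial\widehat\Omega)$, giving $\|(I-Q_\ell)w\|_{L_2(\partial\widehat\Omega)}\le\|w\|_{L_2(\partial\widehat\Omega)}$. For the $H^1$-endpoint I would invoke the $p$-robust $L_2$-approximation estimate for the $L_2$-projection onto maximum-smoothness spline spaces from Ref.~\refcite{TakacsTakacs:2015} (or Ref.~\refcite{SandeManniSpeelers:2019}), transported to $\partial\widehat\Omega$ through $\gamma$, yielding $\|(I-Q_\ell)w\|_{L_2(\partial\widehat\Omega)}^2\lesssim\widehat h_\ell^2\,|w|_{H^1(\partial\widehat\Omega)}^2\le\widehat h_\ell^2\,\|w\|_{H^1(\partial\widehat\Omega)}^2$. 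Thus $I-Q_\ell$ maps $L_2(\partial\widehat\Omega)\to L_2(\partial\widehat\Omega)$ with norm $\le 1$ and $H^1(\partial\widehat\Omega)\to L_2(\partial\widehat\Omega)$ with norm $\lesssim\widehat h_\ell$; interpolating with parameter $\theta=1/2$ gives a bound $\lesssim\widehat h_\ell^{1/2}$ for $I-Q_\ell:H^{1/2}(\partial\widehat\Omega)\to L_2(\partial\widehat\Omega)$, and squaring this is the assertion for $\ell=1,\ldots,L$.

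It remains to handle the two boundary levels. For $\ell=0$ we have $\mathcal W^{(0)}=\{0\}$, hence $Q_0=0$ and $\|(I-Q_0)w\|_{L_2(\partial\widehat\Omega)}^2=\|w\|_{L_2(\partial\widehat\Omega)}^2\le\|w\|_{H^{1/2}(\partial\widehat\Omega)}^2\lesssim\widehat h_0\,\|w\|_{H^{1/2}(\partial\widehat\Omega)}^2$, since $\widehat h_0=4^L\widehat h\eqsim 1$ by the choice of $L$ (the coarsest grid being the single element $(0,1)$). For $\ell=L+1$, the nestedness $\mathcal W^{(L)}\subseteq\mathcal W^{(L+1)}$ and the best-approximation property of $Q_{L+1}$ give $\|(I-Q_{L+1})w\|_{L_2(\partial\widehat\Omega)}\le\|(I-Q_{L})w\|_{L_2(\partial\widehat\Omega)}$, so the already-proven case $\ell=L$ together with $\widehat h_L\eqsim\widehat h_{L+1}$ finishes the proof.

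The step I expect to be the main obstacle is the $H^1$-endpoint estimate for $\ell=1,\ldots,L$: one must make precise a $p$-robust $L_2$-approximation bound for the $L_2$-projection onto a \emph{periodic} maximum-smoothness spline space and transfer it cleanly through $\gamma$, checking in particular that the possibly different one-dimensional grids in the two parameter directions combine into a quasi-uniform periodic grid of size $\eqsim\widehat h_\ell$ and that the corner smoothness condition $w\circ\gamma\in C^{p-1}$ is exactly the hypothesis under which the one-dimensional $p$-robust estimate applies. The contraction bound, the interpolation step, and the two boundary levels are routine.
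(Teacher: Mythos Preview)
Your proposal is correct and follows essentially the same line as the paper: establish the two endpoint bounds $\|(I-Q_\ell)w\|_{L_2}\le\|w\|_{L_2}$ and $\|(I-Q_\ell)w\|_{L_2}\lesssim\widehat h_\ell\|w\|_{H^1}$ (the latter via the $p$-robust spline approximation result from Ref.~\refcite{SandeManniSpeelers:2019} after unrolling by $\gamma$), then interpolate to $H^{1/2}$. The only organizational difference is that the paper treats all $\ell\ge1$ uniformly---the approximation estimate applies to $\mathcal W^{(L+1)}$ as well, since $\mathcal W^{(L+1)}$ contains the maximum-smoothness space on the finest grid and $Q_{L+1}$ realizes the best approximation---whereas you single out $\ell=L+1$ and reduce it to $\ell=L$ via nestedness and $\widehat h_L\eqsim\widehat h_{L+1}$; both are fine.
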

\begin{proof}
	For $\ell=0$, we have $Q_\ell=0$ and $\widehat h_\ell=1$. Thus, the statement
	is trivial. Now, assume that $\ell \ge 1$.
	Let $w\in H^1(\partial\widehat\Omega)$ be arbitrary but fixed and let
	$v:=w\circ \gamma$.
	Observe that
	\[
	\| (I-Q_\ell) w \|_{L_2(\partial\widehat\Omega)}^2
	= \inf_{w_\ell \in \mathcal W^{(\ell)}}
	\| w-w_\ell \|_{L_2(\partial\widehat\Omega)}^2
	= \inf_{v_\ell \in \mathcal V^{(\ell)}}
	\| v-v_\ell \|_{L_2(0,4)}^2,
	\]
	where $\gamma$ is as in~\eqref{eq:unroll}.
	The Theorem~4.1 in Ref.~\refcite{SandeManniSpeelers:2019} and~\eqref{eq:gridsize} yield
	\[
	\| (I-Q_\ell) w \|_{L_2(\partial\widehat\Omega)}^2
	\lesssim \widehat h_\ell^2 |v|_{H^1(0,4)}^2.
	\]
	For the choice $v_\ell:=0$, we have
	\[
	\| (I-Q_\ell) w \|_{L_2(\partial\widehat\Omega)}^2 \lesssim \|v\|_{L_2(0,4)}^2.
	\]
	Using Hilbert space interpolation theory, cf. Theorems~7.23 and 7.31 in Ref.~\refcite{AdamsFournier:2003}, we obtain
	\[
	\| (I-Q_\ell) w \|_{L_2(\partial\widehat\Omega)}^2 \lesssim \widehat h_\ell \|v\|_{H^{1/2}(0,4)}^2.
	\]
	Using $\|v\|_{L_2(0,4)}^2= \|w\|_{L_2(\partial\widehat\Omega)}^2$ and
	\begin{equation}\nonumber
	|v|_{H^{1/2}(0,4)}^2
	= \int_0^4\int_0^4 \frac{|v(s)-v(t)|^2}{|s-t|^2} \,\mathrm ds \,\mathrm dt
	\lesssim
	\int_0^4\int_0^4 \frac{|v(s)-v(t)|^2}{|\gamma(s)-\gamma(t)|^2} \,\mathrm ds \,\mathrm dt
	= |w|_{H^{1/2}(\partial\widehat\Omega)}^2
	,
	\end{equation}
	we immediately obtain the desired result.
\end{proof}

\begin{lemma}\label{lem:approx:pi}
	$\| (I-\Pi_\ell) w \|_{L_2(\partial\widehat\Omega)}^2 \lesssim \widehat h_\ell^2
	|w|_{H^1(\partial\widehat\Omega)}^2$
	holds for all $w\in H^1(\partial\widehat\Omega)$ and all $\ell=0,\ldots,L+1$.
\end{lemma}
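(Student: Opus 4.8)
The plan is an Aubin--Nitsche duality argument; the naive bound $\|(I-\Pi_\ell)w\|_{L_2(\partial\widehat\Omega)}\le\|w-v\|_{H^1(\partial\widehat\Omega)}$ is of no use here, since $w$ is only assumed to lie in $H^1$. For $\ell=0$ the statement is trivial ($\Pi_0=0$, $\widehat h_0=1$), so I take $\ell\ge1$ and set $e:=(I-\Pi_\ell)w$. Let $b(\cdot,\cdot)$ denote the full scalar product inducing $\|\cdot\|_{H^1(\partial\widehat\Omega)}$; by definition of $\Pi_\ell$ one has the Galerkin orthogonality $b(e,v)=0$ for all $v\in\mathcal W^{(\ell)}$. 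As in the proof of Lemma~\ref{lem:approx:q}, I transport the computation to the $4$-periodic line via $\gamma$ from~\eqref{eq:unroll}; under this identification $H^1(\partial\widehat\Omega)$ becomes $H^1$ of the circle of circumference $4$, the space $\mathcal W^{(\ell)}$ is, for $\ell\le L$, a periodic spline space of maximal smoothness on a grid of size comparable to $\widehat h_\ell$, and $\mathcal W^{(L+1)}$ contains the maximally smooth space $\mathcal W^{(L)}$ with $\widehat h_L\eqsim\widehat h_{L+1}$.

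Next I introduce the dual function $\phi$ solving the periodic problem $b(\phi,v)=(e,v)_{L_2(\partial\widehat\Omega)}$ for all $v\in H^1(\partial\widehat\Omega)$, i.e.\ $-\phi''+\phi=e$ with periodic boundary conditions; elliptic regularity for this problem gives $\|\phi\|_{H^2(\partial\widehat\Omega)}\lesssim\|e\|_{L_2(\partial\widehat\Omega)}$. Testing with $v=e$ and subtracting an arbitrary $q\in\mathcal W^{(\ell)}$ (allowed by the Galerkin orthogonality of $e$),
\[
\|e\|_{L_2(\partial\widehat\Omega)}^2=b(\phi,e)=b(\phi-q,e)\le\|\phi-q\|_{H^1(\partial\widehat\Omega)}\,\|e\|_{H^1(\partial\widehat\Omega)}.
\]
For the first factor I choose $q$ to be a $p$-robust quasi-interpolant of $\phi$ inside $\mathcal W^{(\min\{\ell,L\})}\subseteq\mathcal W^{(\ell)}$; the sharp approximation estimates for periodic maximally smooth splines (Theorem~6.1 in Ref.~\refcite{TakacsTakacs:2015}, see also Ref.~\refcite{SandeManniSpeelers:2019}) together with~\eqref{eq:gridsize} give $\|\phi-q\|_{H^1(\partial\widehat\Omega)}\lesssim\widehat h_\ell\,|\phi|_{H^2(\partial\widehat\Omega)}\lesssim\widehat h_\ell\,\|e\|_{L_2(\partial\widehat\Omega)}$. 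For the second factor I use that $\mathcal W^{(\ell)}$ contains the constants (since $\ell\ge1$), so $e=(I-\Pi_\ell)w$ is unchanged if $w$ is replaced by $w-c$; choosing $c$ to be the mean value of $w$ on $\partial\widehat\Omega$ and using that $\Pi_\ell$ is the $H^1$-best approximation, $\|e\|_{H^1(\partial\widehat\Omega)}\le\|w-c\|_{H^1(\partial\widehat\Omega)}\lesssim|w|_{H^1(\partial\widehat\Omega)}$ by the Poincar\'e inequality on $\partial\widehat\Omega$. Combining the three estimates yields $\|e\|_{L_2(\partial\widehat\Omega)}\lesssim\widehat h_\ell\,|w|_{H^1(\partial\widehat\Omega)}$, which is the claim after squaring.

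The main obstacle is the first factor: one needs a $p$-robust $H^1$-error bound for the coarse spline spaces $\mathcal W^{(\ell)}$, and this is exactly where the maximal-smoothness construction of the $\mathcal W^{(\ell)}$ is essential — a standard, $p$-dependent polynomial approximation estimate would spoil the $p$-robustness of the final bound. The non-smooth top space $\mathcal W^{(L+1)}$ is handled by approximating instead within the maximally smooth subspace $\mathcal W^{(L)}\subseteq\mathcal W^{(L+1)}$, whose grid size is comparable to $\widehat h_{L+1}$.
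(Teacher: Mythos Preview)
Your proposal is correct and follows essentially the same Aubin--Nitsche duality argument as the paper; the paper's proof is simply more terse, first noting that $|(I-\Pi_\ell)w|_{H^1(\partial\widehat\Omega)}^2\lesssim\widehat h_\ell^2|w|_{H^2(\partial\widehat\Omega)}^2$ (from Theorem~4.1 in Ref.~\refcite{SandeManniSpeelers:2019}) and then invoking Aubin--Nitsche abstractly, whereas you spell the duality argument out and handle $\ell=L+1$ by passing to the maximally smooth subspace $\mathcal W^{(L)}$. One minor remark: Theorem~6.1 in Ref.~\refcite{TakacsTakacs:2015} is the inverse inequality, not the approximation estimate --- the correct reference for the $p$-robust $H^1$-approximation bound you need on $\phi$ is Ref.~\refcite{SandeManniSpeelers:2019}, as you also cite.
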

\begin{proof}
	Analogously to the proof of Lemma~\ref{lem:approx:q}, one can derive from
	Theorem~4.1 in Ref.~\refcite{SandeManniSpeelers:2019} and~\eqref{eq:gridsize} that
	$
	| (I-\Pi_\ell) w |_{H^1(\partial\widehat\Omega)}^2 \lesssim \widehat h_\ell^2
	|w|_{H^{2}(\partial\widehat\Omega)}^2
	$
	holds for all $w\in H^1(\partial\widehat\Omega)$.
	The desired result follows using a standard Aubin-Nitsche duality argument.
\end{proof}

The following Lemma shows that
an $L_2$-orthogonal decomposition almost realizes the minimum of arbitrary
decompositions.
\begin{lemma}\label{lem:qsum:any}
	The estimate
	\[
	\sum_{\ell=1}^{L+1} \widehat h_\ell^{-2}
	\| (Q_\ell-Q_{\ell-1}) w \|_{L_2(\partial\widehat\Omega)}^2
	\eqsim
	\inf_{w_\ell\in \mathcal W^{(\ell)},\, w=\sum_{\ell=1}^{L+1} w_{\ell}}
	\sum_{\ell=1}^{L+1}
	\widehat h_{\ell}^{-2} \|w_{\ell}\|_{L_2(\partial\widehat\Omega)}^2
	\]
	holds for all $w\in H^1(\partial\widehat\Omega)$.
\end{lemma}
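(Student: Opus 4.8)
The plan is to establish the two inequalities hidden behind $\eqsim$ separately, each with an absolute constant, noting at the outset that the statement is a purely Hilbert-space fact about a nested family of closed subspaces equipped with a geometrically decaying weight sequence: neither the inverse estimates of Lemmas~\ref{lem:inverse:q}--\ref{lem:inverse:half} nor the approximation estimates of Lemmas~\ref{lem:approx:q}--\ref{lem:approx:pi} enter. For the direction $\gtrsim$ (i.e., the $L_2$-orthogonal splitting almost attains the infimum), I would test the infimum with the particular decomposition $w_\ell := (Q_\ell - Q_{\ell-1})w \in \mathcal W^{(\ell)}$, $\ell = 1,\ldots,L+1$. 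Since the spaces are nested, $Q_0 = 0$, and $Q_{L+1}w = w$ for $w$ in the finest space $\mathcal W^{(L+1)}$, the sum telescopes to $\sum_{\ell=1}^{L+1} w_\ell = w$, so this splitting is admissible and bounds the right-hand side by the left-hand side.

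For the direction $\lesssim$, let $w = \sum_{j=1}^{L+1} w_j$ with $w_j \in \mathcal W^{(j)}$ be an arbitrary admissible splitting and abbreviate $P_\ell := Q_\ell - Q_{\ell-1}$. Two elementary observations suffice: by nestedness $P_\ell w_j = 0$ whenever $j \le \ell-1$ (then $w_j \in \mathcal W^{(\ell-1)}$, which is left invariant by both $Q_\ell$ and $Q_{\ell-1}$), and $P_\ell$ is an $L_2(\partial\widehat\Omega)$-orthogonal projection, so $\|P_\ell w_j\|_{L_2(\partial\widehat\Omega)} \le \|w_j\|_{L_2(\partial\widehat\Omega)}$. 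The triangle inequality then gives $\|P_\ell w\|_{L_2(\partial\widehat\Omega)} \le \sum_{j=\ell}^{L+1}\|w_j\|_{L_2(\partial\widehat\Omega)}$.

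It then remains to insert the geometric law $\widehat h_\ell = 4^{L-\ell}\widehat h$ and to bound $\sum_{\ell=1}^{L+1} \widehat h_\ell^{-2}\big(\sum_{j=\ell}^{L+1}\|w_j\|_{L_2(\partial\widehat\Omega)}\big)^2$. With $c_j := \widehat h_j^{-1}\|w_j\|_{L_2(\partial\widehat\Omega)}$ every summand equals $\big(\sum_{j \ge \ell} 4^{\ell-j} c_j\big)^2$; applying the Cauchy--Schwarz inequality with the summable weights $4^{\ell-j}$ (whose sum over $j \ge \ell$ is at most $4/3$) and then interchanging the order of summation yields $\sum_\ell \big(\sum_{j\ge\ell} 4^{\ell-j} c_j\big)^2 \le (4/3)^2 \sum_j c_j^2 = (16/9)\sum_j \widehat h_j^{-2}\|w_j\|_{L_2(\partial\widehat\Omega)}^2$, and taking the infimum over all splittings completes the argument. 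I expect the only steps requiring a little care to be this discrete Hardy-type rearrangement and the bookkeeping that the admissibility of the orthogonal splitting in the first part rests on $w \in \mathcal W^{(L+1)}$, so that $Q_{L+1}w=w$; everything else is routine.
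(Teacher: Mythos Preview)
Your proof is correct and follows essentially the same route as the paper: both directions rest on nestedness, the annihilation $(Q_\ell-Q_{\ell-1})w_j=0$ for $j<\ell$, Cauchy--Schwarz against the geometric weights $\widehat h_\ell$, and an interchange of summation order. The only execution difference is that after the triangle inequality the paper retains the finer pieces $w_\ell^{(n)}:=(Q_n-Q_{n-1})w_\ell$ and closes with the Pythagorean identity $\sum_{n\le\ell}\|w_\ell^{(n)}\|_{L_2}^2=\|w_\ell\|_{L_2}^2$, whereas you bound $\|(Q_\ell-Q_{\ell-1})w_j\|_{L_2}\le\|w_j\|_{L_2}$ directly and absorb the slack with a second geometric-series summation; your variant is marginally more elementary and produces the explicit constant $16/9$. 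Your remark that the admissibility of the orthogonal splitting in the $\gtrsim$ direction tacitly requires $w\in\mathcal W^{(L+1)}$ (so that $Q_{L+1}w=w$) is well taken and is implicitly assumed in the paper's proof as well.
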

\begin{proof}
	Since $(Q_\ell-Q_{\ell-1}) w \in \mathcal W^{(\ell)}$, we immediately
	obtain
	\[
	\sum_{\ell=1}^{L+1} \widehat h_\ell^{-2}
	\| (Q_\ell-Q_{\ell-1}) w \|_{L_2(\partial\widehat\Omega)}^2
	\ge
	\inf_{w_\ell\in \mathcal W^{(\ell)},\, w=\sum_{\ell=1}^{L+1} w_{\ell}}
	\sum_{\ell=1}^{L+1}
	\widehat h_{\ell}^{-2} \|w_{\ell}\|_{L_2(\partial\widehat\Omega)}^2.
	\]
	For the proof of the other direction, let $w\in H^1(\partial\widehat\Omega)$
	be arbitrary but fixed. Consider any fixed representation of
	$w=\sum_{\ell=1}^{L+1} w_{\ell}$ with $w_\ell\in \mathcal W^{(\ell)}$.
	Observe that $w_{\ell}$ can be uniquely written as
	\[
	w_{\ell}
	= \sum_{n=1}^{\ell} w_{\ell}^{(n)}
	,
	\quad\mbox{where}\quad
	w_{\ell}^{(n)} = (Q_n-Q_{n-1}) w_\ell.
	\]
	From $w=\sum_{\ell=1}^{L+1} w_{\ell}$, we immediately obtain
	$
	(Q_n-Q_{n-1}) w = \sum_{\ell=n}^{L+1} w_{\ell}^{(n)}
	$.
	Using the triangle inequality and the Cauchy-Schwarz inequality, we obtain
	\[
	\begin{aligned}
	\widehat h_n^{-2}
	\|(Q_n-Q_{n-1}) w\|_{L_2(\partial\widehat\Omega)}^2
	&\le \widehat h_n^{-2} \left( \sum_{\ell=n}^{L+1} 
	\|w_{\ell}^{(n)}\|_{L_2(\partial\widehat\Omega)}\right)^2\\
	&\le \underbrace{\left( \sum_{\ell=n}^{L+1} 
		\frac{\widehat{h}_\ell^2}{\widehat{h}_n^2}\right)}_{\displaystyle \lesssim 1}
	\left( \sum_{\ell=n}^{L+1} \widehat h_\ell^{-2}
	\|w_{\ell}^{(n)}\|_{L_2(\partial\widehat\Omega)}^2\right).
	\end{aligned}
	\]
	By summing over $n$, we obtain using orthogonality
	\[
	\begin{aligned}
	\sum_{n=1}^{L+1}
	\widehat h_n^{-2}
	\|(Q_n-Q_{n-1}) w\|_{L_2(\partial\widehat\Omega)}^2
	&\lesssim \sum_{n=1}^{L+1} \sum_{\ell=n}^{L+1} \widehat h_\ell^{-2}
	\|w_{\ell}^{(n)}\|_{L_2(\partial\widehat\Omega)}^2
	=\sum_{\ell=1}^{L+1} \widehat h_\ell^{-2}
	\|w_{\ell}\|_{L_2(\partial\widehat\Omega)}^2
	,
	\end{aligned}
	\]
	which finishes the proof.
\end{proof}

\begin{lemma}\label{lem:qsum:h1}
	The estimate
	$
	\sum_{\ell=1}^{L+1} 
	\widehat h_\ell^{-2}
	\| (Q_\ell-Q_{\ell-1}) w \|_{L_2(\partial\widehat\Omega)}^2
	\lesssim \|w\|_{H^1(\partial\widehat\Omega)}^2
	$
	holds for all $w\in H^1(\partial\widehat\Omega)$.
\end{lemma}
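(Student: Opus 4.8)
The plan is to combine Lemma~\ref{lem:qsum:any} (quasi-optimality of the $L_2$-orthogonal multilevel decomposition) with an explicit decomposition built from the $H^1$-orthogonal projectors $\Pi_\ell$, after peeling off the finest level $\ell=L+1$. First I would split, using $(Q_\ell-Q_{\ell-1})w=(Q_\ell-Q_{\ell-1})Q_Lw$ for $\ell\le L$ (which holds by nestedness $\mathcal W^{(\ell)}\subseteq\mathcal W^{(L)}$),
\[
\sum_{\ell=1}^{L+1}\widehat h_\ell^{-2}\|(Q_\ell-Q_{\ell-1})w\|_{L_2(\partial\widehat\Omega)}^2
=\underbrace{\sum_{\ell=1}^{L}\widehat h_\ell^{-2}\|(Q_\ell-Q_{\ell-1})Q_Lw\|_{L_2(\partial\widehat\Omega)}^2}_{=:\,I}
+\underbrace{\widehat h_{L+1}^{-2}\|(Q_{L+1}-Q_L)w\|_{L_2(\partial\widehat\Omega)}^2}_{=:\,II}.
\]
The term $II$ is easy: $Q_{L+1}-Q_L$ is an $L_2$-orthogonal projector vanishing on $\mathcal W^{(L)}$, so subtracting $\Pi_Lw$ and using Lemma~\ref{lem:approx:pi} together with $\widehat h_{L+1}=\tfrac14\widehat h_L$ gives $II\lesssim\widehat h_{L+1}^{-2}\widehat h_L^2\,|w|_{H^1(\partial\widehat\Omega)}^2\lesssim|w|_{H^1(\partial\widehat\Omega)}^2$.

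For $I$, since $Q_Lw\in\mathcal W^{(L)}\subseteq\mathcal W^{(L+1)}$ the $\ell=L+1$ summand vanishes, so $I$ equals the left-hand side of Lemma~\ref{lem:qsum:any} at $Q_Lw$; applying that lemma to the admissible decomposition $v_\ell:=(\Pi_\ell-\Pi_{\ell-1})Q_Lw$ for $\ell\le L$ and $v_{L+1}:=0$ (note $v_\ell\in\mathcal W^{(\ell)}$ and $\sum_\ell v_\ell=\Pi_LQ_Lw=Q_Lw$) gives $I\lesssim\sum_{\ell=1}^{L}\widehat h_\ell^{-2}\|v_\ell\|_{L_2(\partial\widehat\Omega)}^2$. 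The key point is that $v_\ell$ is $H^1$-orthogonal to $\mathcal W^{(\ell-1)}$, whence $\Pi_{\ell-1}v_\ell=0$ and $v_\ell=(I-\Pi_{\ell-1})v_\ell$; Lemma~\ref{lem:approx:pi} then yields $\widehat h_\ell^{-2}\|v_\ell\|_{L_2(\partial\widehat\Omega)}^2\lesssim\widehat h_\ell^{-2}\widehat h_{\ell-1}^2|v_\ell|_{H^1(\partial\widehat\Omega)}^2\lesssim\|v_\ell\|_{H^1(\partial\widehat\Omega)}^2$ for $\ell\ge2$, and for $\ell=1$ this holds trivially because $\widehat h_1\eqsim1$. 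Summing and exploiting the mutual $H^1$-orthogonality of the $v_\ell$ ($v_m$ is $H^1$-orthogonal to $\mathcal W^{(m-1)}\supseteq\mathcal W^{(\ell)}\ni v_\ell$ for $\ell<m$), I obtain
\[
I\lesssim\sum_{\ell=1}^{L}\|v_\ell\|_{H^1(\partial\widehat\Omega)}^2
=\Big\|\sum_{\ell=1}^{L}v_\ell\Big\|_{H^1(\partial\widehat\Omega)}^2
=\|Q_Lw\|_{H^1(\partial\widehat\Omega)}^2.
\]

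It then remains to bound $\|Q_Lw\|_{H^1(\partial\widehat\Omega)}\lesssim\|w\|_{H^1(\partial\widehat\Omega)}$. Writing $Q_Lw=\Pi_Lw+(Q_Lw-\Pi_Lw)$, Lemma~\ref{lem:approx:pi} gives $\|Q_Lw-\Pi_Lw\|_{L_2(\partial\widehat\Omega)}\le2\|w-\Pi_Lw\|_{L_2(\partial\widehat\Omega)}\lesssim\widehat h_L|w|_{H^1(\partial\widehat\Omega)}$, and since $Q_Lw-\Pi_Lw\in\mathcal W^{(L)}$ is a spline of maximum smoothness, the ($p$-independent) inverse inequality in Lemma~\ref{lem:inverse:q} upgrades this to $|Q_Lw-\Pi_Lw|_{H^1(\partial\widehat\Omega)}\lesssim|w|_{H^1(\partial\widehat\Omega)}$; combined with $\|\Pi_Lw\|_{H^1(\partial\widehat\Omega)}\le\|w\|_{H^1(\partial\widehat\Omega)}$ and $\widehat h_L\lesssim1$, this closes the argument: $\sum_{\ell}\widehat h_\ell^{-2}\|(Q_\ell-Q_{\ell-1})w\|_{L_2(\partial\widehat\Omega)}^2=I+II\lesssim\|w\|_{H^1(\partial\widehat\Omega)}^2$.

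I expect the main obstacle to be the avoidance of a spurious factor in the number of levels $L$ (equivalently in $1+\log(H/h)$): estimating each summand $\widehat h_\ell^{-2}\|(Q_\ell-Q_{\ell-1})w\|_{L_2}^2$ directly through the approximation estimate produces exactly such a factor, and the remedy is to route the argument through the $H^1$-orthogonal increments $(\Pi_\ell-\Pi_{\ell-1})Q_Lw$, whose squared $H^1$-norms add up. A secondary subtlety is keeping the constant independent of $p$, which is precisely why the finest level is peeled off at the outset, so that the only $H^1$-stability invoked is that of $Q_L$ on a maximum-smoothness space, where the pertinent inverse inequality is $p$-robust; the degenerate cases $\ell=1$ and $L=0$ are harmless since $\widehat h_1\eqsim1$.
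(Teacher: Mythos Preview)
Your argument is correct, but it takes a longer detour than necessary compared to the paper. The paper applies Lemma~\ref{lem:qsum:any} directly to $w$ and bounds the infimum by the single decomposition $w_\ell:=(\Pi_\ell-\Pi_{\ell-1})w$, $\ell=1,\ldots,L+1$; then Lemma~\ref{lem:approx:pi} (applied with index $\ell-1$) and $\widehat h_{\ell-1}\eqsim\widehat h_\ell$ give $\widehat h_\ell^{-2}\|w_\ell\|_{L_2}^2\lesssim\|w_\ell\|_{H^1}^2$, and $H^1$-orthogonality of the increments yields $\sum_\ell\|w_\ell\|_{H^1}^2=\|w\|_{H^1}^2$. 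No splitting into $I$ and $II$, no $H^1$-stability of $Q_L$, and no inverse inequality are needed.

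The motivation you give for peeling off the finest level --- avoiding a $p$-dependent constant from an inverse inequality on $\mathcal W^{(L+1)}$ --- is therefore misplaced: the direct route never invokes an inverse inequality, only the $p$-robust approximation estimate of Lemma~\ref{lem:approx:pi}, so it is already $p$-independent. What your detour does buy is a clean treatment of the case $w\in H^1(\partial\widehat\Omega)\setminus\mathcal W$: the paper's decomposition $\sum_\ell(\Pi_\ell-\Pi_{\ell-1})w$ only sums to $w$ when $\Pi_{L+1}w=w$, i.e., when $w\in\mathcal W^{(L+1)}=\mathcal W$, so the paper tacitly restricts to that case (which suffices for all downstream uses). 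Your version, by routing the argument through $Q_Lw\in\mathcal W^{(L)}$ and then establishing $\|Q_Lw\|_{H^1}\lesssim\|w\|_{H^1}$, is rigorous for arbitrary $w\in H^1(\partial\widehat\Omega)$ as stated.
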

\begin{proof}
	Using Lemma~\ref{lem:qsum:any} and by bounding the infimum by a particular
	decomposition, and using Lemma~\ref{lem:approx:pi}, $\Pi_0=0$ and $\widehat h_0\eqsim 1$ we obtain
	\[
	\begin{aligned}
	&	\sum_{\ell=1}^{L} 
	\widehat h_\ell^{-2}
	\| (Q_\ell-Q_{\ell-1}) w \|_{L_2(\partial\widehat\Omega)}^2
	\eqsim
	\inf_{w_\ell\in \mathcal W^{(\ell)},\,  w=\sum_{\ell=1}^{L+1} w_{\ell}}
	\sum_{\ell=1}^{L+1}
	\widehat h_{\ell}^{-2} \|w_{\ell}\|_{L_2(\partial\widehat\Omega)}^2\\
	&\qquad\le
	\sum_{\ell=1}^{L+1} 
	\widehat h_\ell^{-2}
	\| (\Pi_\ell-\Pi_{\ell-1}) w \|_{L_2(\partial\widehat\Omega)}^2
	\lesssim
	\sum_{\ell=1}^{L+1} 
	\| (\Pi_\ell-\Pi_{\ell-1}) w \|_{H^1(\partial\widehat\Omega)}^2
	= \|w\|_{H^1(\partial\widehat\Omega)}^2,
	\end{aligned}
	\]
	which finishes the proof.
\end{proof}

\begin{lemma}\label{lem:approx:qsum}
	The estimate
	$
	\sum_{\ell=1}^{L+1} \widehat h_\ell^{-1}
	\| (Q_\ell-Q_{\ell-1}) w \|_{L_2(\partial\widehat\Omega)}^2
	\lesssim \|w\|_{H^{1/2}(\partial\widehat\Omega)}^2
	$
	holds for all $w\in H^1(\partial\widehat\Omega)$.
\end{lemma}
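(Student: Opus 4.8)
The plan is to derive the estimate by real interpolation between an elementary $L_2$-bound and the $H^1$-bound already established in Lemma~\ref{lem:qsum:h1}. To this end, I introduce the linear operator
\[
	T w := \big( (Q_\ell-Q_{\ell-1}) w \big)_{\ell=1}^{L+1},
\]
and, for $s\in[0,1]$, I let $E_s$ denote the space of such sequences equipped with the norm $\|(v_\ell)_{\ell}\|_{E_s}^2 := \sum_{\ell=1}^{L+1}\widehat h_\ell^{-2s}\,\|v_\ell\|_{L_2(\partial\widehat\Omega)}^2$. The assertion of the lemma is exactly $\|Tw\|_{E_{1/2}}^2\lesssim\|w\|_{H^{1/2}(\partial\widehat\Omega)}^2$.

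First I would verify the two endpoint bounds for $T$. Since the $Q_\ell$ are nested $L_2(\partial\widehat\Omega)$-orthogonal projectors, the increments $(Q_\ell-Q_{\ell-1})w$ are mutually $L_2$-orthogonal and add up to $Q_{L+1}w$; hence, by the Pythagorean identity, $\|Tw\|_{E_0}^2=\|Q_{L+1}w\|_{L_2(\partial\widehat\Omega)}^2\le\|w\|_{L_2(\partial\widehat\Omega)}^2$, that is, $T\colon L_2(\partial\widehat\Omega)\to E_0$ is bounded with norm $\lesssim1$. The second endpoint bound, $T\colon H^1(\partial\widehat\Omega)\to E_1$ bounded with norm $\lesssim1$, is precisely the statement of Lemma~\ref{lem:qsum:h1}.

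By real interpolation with parameter $\theta=\tfrac12$, it then follows that $T\colon[L_2(\partial\widehat\Omega),H^1(\partial\widehat\Omega)]_{1/2}\to[E_0,E_1]_{1/2}$ is bounded with norm $\lesssim1$, and it remains to identify the two interpolation spaces. On the source side, $[L_2(\partial\widehat\Omega),H^1(\partial\widehat\Omega)]_{1/2}=H^{1/2}(\partial\widehat\Omega)$ with equivalent norms, cf.\ Theorems~7.23 and~7.31 in Ref.~\refcite{AdamsFournier:2003}. On the target side, $E_0$ and $E_1$ are weighted $\ell_2$-type spaces whose component weights are $1$ and $\widehat h_\ell^{-1}$, respectively, so their interpolation space is again a weighted $\ell_2$-space with the geometric-mean weights $\widehat h_\ell^{-1/2}$, i.e.\ $[E_0,E_1]_{1/2}=E_{1/2}$ with equivalent norms; this is the classical interpolation result for $\ell_2$-valued sequence spaces, and, thanks to the purely componentwise structure, the norm equivalence constants are independent of $L$ and of $\widehat h$. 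Combining both identifications gives $\sum_{\ell=1}^{L+1}\widehat h_\ell^{-1}\|(Q_\ell-Q_{\ell-1})w\|_{L_2(\partial\widehat\Omega)}^2=\|Tw\|_{E_{1/2}}^2\lesssim\|w\|_{H^{1/2}(\partial\widehat\Omega)}^2$, which is the claim.

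The step I expect to require the most care is the identification $[E_0,E_1]_{1/2}=E_{1/2}$ with constants that do not deteriorate as $L\to\infty$ or $\widehat h\to0$; the safest route is to compute the $K$-functional of the couple $(E_0,E_1)$ directly, which is possible in closed form because the problem decouples over $\ell$, so that uniformity of the constants becomes transparent. Alternatively, one may first transport everything from $\partial\widehat\Omega$ to the interval $(0,4)$ through the unrolling map $\gamma$ of~\eqref{eq:unroll}, exactly as in the proof of Lemma~\ref{lem:approx:q}, so that $[L_2(0,4),H^1(0,4)]_{1/2}=H^{1/2}(0,4)$ becomes entirely standard, and then return to $\partial\widehat\Omega$ via the inequality $|w\circ\gamma|_{H^{1/2}(0,4)}\lesssim|w|_{H^{1/2}(\partial\widehat\Omega)}$ used there.
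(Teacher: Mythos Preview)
Your proposal is correct and follows essentially the same route as the paper: the paper introduces the same operator (called $\mathcal Q$ there), records the two endpoint bounds $\|\mathcal Q w\|_{L_2}^2=\|w\|_{L_2}^2$ and $\|\mathcal Q w\|_{L_h}^2\lesssim\|w\|_{H^1}^2$ from orthogonality and Lemma~\ref{lem:qsum:h1}, and then appeals to Hilbert space interpolation (Theorems~7.23 and 7.31 in Ref.~\refcite{AdamsFournier:2003}) to conclude. Your write-up is more explicit than the paper about the identification $[E_0,E_1]_{1/2}=E_{1/2}$ and the uniformity of the constants, which is a useful clarification rather than a different argument.
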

\begin{proof}
	Orthogonality and Lemma~\ref{lem:qsum:h1} yield
	\[
	\| \mathcal Q w \|_{L_2(\partial\widehat\Omega)}^2
	=
	\| w \|_{L_2(\partial\widehat\Omega)}^2
	\quad
	\mbox{and}
	\quad
	\| \mathcal Q w \|_{L_h}^2
	\lesssim
	\| w \|_{H^1(\partial\widehat\Omega)}^2,
	\]
	where
	\[
	\mathcal Q := \left(
	\begin{array}{c}
	Q_1 - Q_0 \\
	Q_2 - Q_1 \\
	\vdots\\
	Q_{L+1} - Q_L
	\end{array}
	\right),
	\quad\mbox{and}\quad
	\| (w_1,\cdots,w_{L+1}) \|_{L_h}^2 := \sum_{\ell=1}^{L+1} h_\ell^{-2} \|w_{\ell}\|_{L_2(\partial\widehat\Omega)}^2.
	\]
	Using Hilbert space interpolation theory, cf. Theorems~7.23 and 7.31 in Ref.~\refcite{AdamsFournier:2003},
	we immediately obtain the desired result.
\end{proof}

As a next step, we define extension operators that extend the solution from one edge
into the interior. Let $\widehat \Gamma^{(\delta)} := \{ \gamma(s)\;:\; s\in (\delta-1,\delta)\}$
for $\delta=1,2,3,4$
be the four sides of $\widehat \Omega$. We define the extension operator $\mathcal E^{(\ell,\widehat \Gamma^{(1)})}: \mathcal W \rightarrow \mathcal V$ as follows:
\[
(\mathcal E^{(\ell,\widehat \Gamma^{(1)})} w)(x,y) := 
w(0,y) \underbrace{ \max\{0,x/\eta^{(\ell,\widehat \Gamma^{(1)})}\}^p }_{\displaystyle \theta^{(\ell,\widehat \Gamma^{(1)})}(x):=},
\]
where $\eta^{(\ell,\widehat \Gamma^{(1)})}:=
\max \{ \zeta^{(k,2)}_i \,:\, \zeta^{(k,2)}_i \le \widehat{h}_\ell \}$
is the largest breakpoint which is smaller than or equal to
$\widehat{h}_\ell$. Note that~\eqref{eq:gridsize} implies that
$\eta^{(\ell,\widehat \Gamma^{(1)})}>0$ and that
\begin{equation}\label{eq:etasize}
\eta^{(\ell,\widehat \Gamma^{(1)})}\eqsim \widehat{h}_\ell.
\end{equation}
Note that by construction
\[
\begin{aligned}
&	(\mathcal E^{(\ell,\widehat \Gamma^{(1)})} w)|_{\widehat \Gamma^{(1)}} = w,
\quad
(\mathcal E^{(\ell,\widehat \Gamma^{(1)})} w)|_{\widehat \Gamma^{(2)}} = w(0,1)\,\theta^{(\ell,\widehat \Gamma^{(1)})}(x),
\\
&	(\mathcal E^{(\ell,\widehat \Gamma^{(1)})} w)|_{\widehat \Gamma^{(3)}} = 0,
\quad 
(\mathcal E^{(\ell,\widehat \Gamma^{(1)})} w)|_{\widehat \Gamma^{(4)}} = w(0,0)\,\theta^{(\ell,\widehat \Gamma^{(1)})}(x) 		
\end{aligned}
\]
for all $w\in\mathcal W$.
For the other edges we define the extension
operator $\mathcal E^{(\ell,\widehat \Gamma^{(\delta)})}$ and the function $\theta^{(\ell,\widehat \Gamma^{(\delta)})}$ analogously.

Let $\widehat{\textbf x}^{(\delta)}:=\gamma(\delta-1)$ for $\delta=1,2,3,4$ be the vertices of
$\widehat \Omega = (0,1)^2$. Consider the vertex
$\widehat{\textbf x}^{(1)}=(0,0)^\top$. The adjacent edges are $\widehat \Gamma^{(1)}$
and $\widehat \Gamma^{(4)}$. We define a vertex extension operator
$\mathcal E^{(\ell,\widehat{\textbf x}^{(1)})}: \mathcal W \rightarrow \mathcal V$ as follows:
\[
(\mathcal E^{(\ell,\widehat{\textbf x}^{(1)})} w)(x,y)
:= w(0,0)
\theta^{(\ell,\widehat{\Gamma}^{(1)})}(x)
\theta^{(\ell,\widehat{\Gamma}^{(4)})}(y).
\]
For any other vertices $\widehat{\textbf x}^{(\delta)}$, we define $\mathcal E^{(\ell,\widehat{\textbf x}^{(\delta)})}$
analogously. Let
$\mathcal E: \mathcal W \rightarrow \mathcal V$ be an overall
extension operator, defined as follows:
\begin{equation}\label{eq:e:def}
\mathcal E := \sum_{\ell=1}^{L+1} \mathcal E^{(\ell)} ( Q_\ell - Q_{\ell-1} ),
\quad\mbox{where}\quad
\mathcal E^{(\ell)}:=
\sum_{\delta=1}^4 \mathcal E^{(\ell,\widehat \Gamma^{(\delta)})}
-
\sum_{\delta=1}^4 \mathcal E^{(\ell,\widehat{\textbf x}^{(\delta)})}.
\end{equation}
By construction, we have
\begin{equation}\label{eq:e:is:extension}
(\mathcal E^{(\ell)} w)|_{\partial\widehat\Omega} = w
\quad\mbox{and}\quad
(\mathcal E w)|_{\partial\widehat\Omega} = \sum_{\ell=1}^{L+1} ( Q_\ell - Q_{\ell-1} ) w
= ( Q_{L+1} - Q_0 ) w = w.
\end{equation}

As a next step, we estimate the $H^1$-seminorm of the extension. Here, we estimate the
constituent parts of $\mathcal E^{(\ell)}$ separately.
\begin{lemma}\label{lem:E:gamma}
	The estimate
	\[
	\begin{aligned}
	(\mathcal E^{(\ell,\widehat \Gamma^{(\delta)})} w,
	\mathcal E^{(n,\widehat \Gamma^{(\delta)})} q)_{H^1(\widehat\Omega)}^2
	\lesssim &
	\sqrt{\frac{\min \{\widehat{h}_\ell,\widehat{h}_n\}}{\max\{\widehat{h}_\ell,\widehat{h}_n\}}}
	\big( p^{-1}\widehat{h}_\ell  |w|_{H^1(\widehat \Gamma^{(\delta)})}^2+ p^{-1}\widehat{h}_n  |q|_{H^1(\widehat \Gamma^{(\delta)})}^2
	\\&\qquad
	+p \widehat{h}_\ell^{-1} \|w\|_{L_2(\widehat \Gamma^{(\delta)})}^2+ p \widehat{h}_n^{-1}\|q\|_{L_2(\widehat \Gamma^{(\delta)})}^2 \big)
	\end{aligned}
	\]
	holds for all $w\in \mathcal W^{(\ell)}$, $q\in \mathcal W^{(n)}$ with $\ell,n=1,\ldots,L+1$
	and $\delta=1,\ldots,4$.
\end{lemma}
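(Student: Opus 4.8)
The plan is to reduce everything to a one-dimensional computation by exploiting the tensor-product structure of the extension operator. Without loss of generality take $\widehat\Gamma^{(\delta)}=\widehat\Gamma^{(1)}$, so that $(\mathcal E^{(\ell,\widehat\Gamma^{(1)})}w)(x,y)=w(0,y)\,\theta_\ell(x)$ with $\theta_\ell(x)=\max\{0,x/\eta_\ell\}^p$ and $\eta_\ell\eqsim\widehat h_\ell$ by~\eqref{eq:etasize}. Since $\partial_x(\mathcal E^{(\ell)}w)=w(0,y)\,\theta_\ell'(x)$ and $\partial_y(\mathcal E^{(\ell)}w)=w'(0,y)\,\theta_\ell(x)$, the $H^1(\widehat\Omega)$ inner product of the two extensions factors into products of one-dimensional integrals:
\[
(\mathcal E^{(\ell,\widehat\Gamma^{(1)})}w,\mathcal E^{(n,\widehat\Gamma^{(1)})}q)_{H^1(\widehat\Omega)}
= (w,q)_{L_2(\widehat\Gamma^{(1)})}\,(\theta_\ell',\theta_n')_{L_2(0,1)}
+ (w',q')_{L_2(\widehat\Gamma^{(1)})}\,(\theta_\ell,\theta_n)_{L_2(0,1)}.
\]
So the whole proof rests on two elementary one-dimensional estimates for the profile functions, namely
\[
|(\theta_\ell,\theta_n)_{L_2(0,1)}| \lesssim p^{-1}\sqrt{\min\{\widehat h_\ell,\widehat h_n\}\max\{\widehat h_\ell,\widehat h_n\}}
\quad\mbox{and}\quad
|(\theta_\ell',\theta_n')_{L_2(0,1)}| \lesssim p\,\frac{1}{\sqrt{\min\{\widehat h_\ell,\widehat h_n\}\max\{\widehat h_\ell,\widehat h_n\}}}.
\]
Together with the Cauchy–Schwarz-type splitting $|(w,q)_{L_2}|\le \tfrac12(\alpha\|w\|_{L_2}^2+\alpha^{-1}\|q\|_{L_2}^2)$ with the weight $\alpha$ chosen so as to balance the $\widehat h_\ell$ versus $\widehat h_n$ asymmetry (concretely $\alpha=\sqrt{\widehat h_n/\widehat h_\ell}$ for the $L_2$ term and its reciprocal for the seminorm term), these two bounds assemble exactly into the claimed inequality, with the prefactor $\sqrt{\min/\max}$ emerging from the product of the profile estimate with the square root coming from the weight.

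The one-dimensional profile estimates are what needs to be done carefully. Assume $\widehat h_\ell\le\widehat h_n$, so $\eta_\ell\le\eta_n$; on $(0,\eta_\ell)$ both profiles are powers, on $(\eta_\ell,\eta_n)$ only $\theta_n$ is nonconstant and $\theta_\ell\equiv (x/\eta_\ell)^p$ grows past $1$, and on $(\eta_n,1)$ both are monomials. For $(\theta_\ell,\theta_n)_{L_2}$ one computes $\int_0^1 (x/\eta_\ell)^p(x/\eta_n)^p\,\mathrm dx = \eta_\ell^{-p}\eta_n^{-p}\frac{1}{2p+1}\lesssim \frac{(\eta_\ell\eta_n)^{-p}}{p}$, but since the integrand is supported effectively near $x=1$ and actually $\int_0^1 x^{2p}\,\mathrm dx=\frac1{2p+1}$, one gets $(\theta_\ell,\theta_n)_{L_2}\eqsim (\eta_\ell\eta_n)^{-p}(2p+1)^{-1}$; the subtlety is that $\eta_\ell^{-p}\eta_n^{-p}$ is \emph{not} $\lesssim\sqrt{\eta_\ell\eta_n}$ in general, so one must instead observe $\theta_\ell(x)\le 1$ on $(0,\eta_\ell)$, bound $\theta_\ell(x)\le (x/\eta_\ell)^p\le x^p\eta_\ell^{-p}$ only where it helps, and otherwise use the crude $0\le\theta_\ell\le 1$ pointwise together with $\mathrm{supp}$-localization — more precisely, $\theta_\ell\le \min\{1,(x/\eta_\ell)^p\}$ and the overlap integral is dominated by the region $x\in(\eta_n,1)$ on which $\theta_n$ is already $O(1)$, giving $(\theta_\ell,\theta_n)_{L_2}\lesssim \int_{0}^{\eta_n}\theta_\ell + \eta_n$-tail contributions; a short but not entirely mechanical computation shows this is $\lesssim \eta_\ell/p + \eta_n \cdot(\text{small})$...

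This is precisely the awkward point, and the honest way through it is to normalize: substitute $x=\eta_n t$ to turn the overlap into a model integral on $(0,1/\eta_n)$ whose leading behavior is governed by $t\approx 1$, use $\max\{0,\eta_\ell t/\eta_n\cdot\eta_n/\eta_\ell\}$... — in short, after rescaling one reduces to estimates of the type $\int_0^1 t^{2p}\,\mathrm dt\eqsim p^{-1}$ and $\int_0^1 (t^{p-1})^2 p^2\,\mathrm dt\eqsim p$, and the geometric-mean factor $\sqrt{\widehat h_\ell\widehat h_n}$ appears because one profile contributes $\eta_\ell$ worth of ``width'' and the other contributes the rescaling Jacobian $\eta_n$, while the factor $\sqrt{\min/\max}$ in the final estimate is exactly $\sqrt{\eta_\ell/\eta_n}=\eta_\ell/\sqrt{\eta_\ell\eta_n}$. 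The derivative estimate is dual: $\theta_\ell'(x)=p\eta_\ell^{-1}(x/\eta_\ell)^{p-1}$, and $(\theta_\ell',\theta_n')_{L_2}=p^2(\eta_\ell\eta_n)^{-1}\int_0^1 (x/\eta_\ell)^{p-1}(x/\eta_n)^{p-1}\,\mathrm dx$ — here the support is again localized and rescaling gives $\lesssim p^2(\eta_\ell\eta_n)^{-1}\cdot \eta_n/p = p(\eta_\ell\eta_n)^{-1}\cdot\eta_n/\!\sqrt{\phantom{x}}$... yielding $\lesssim p/\sqrt{\widehat h_\ell\widehat h_n}$ after using $\eta_\ell\le\eta_n$ and bounding $(x/\eta_\ell)^{p-1}\le 1$ on $(0,\eta_\ell)$. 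The main obstacle is thus entirely contained in making these two monomial-overlap integrals rigorous with the correct $p$-power and the correct geometric-mean $h$-scaling; once they are in hand, the tensor-product factorization and the weighted Cauchy–Schwarz splitting deliver the stated lemma with no further difficulty.
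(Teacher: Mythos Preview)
Your overall strategy is exactly the paper's: factor the $H^1$-inner product via the tensor structure, estimate the one-dimensional profile integrals, and then apply a weighted arithmetic--geometric mean inequality. The gap is entirely in the profile computation, and it has two layers.

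First, you have taken $\theta_\ell(x)=\max\{0,x/\eta_\ell\}^p$ at face value; this is a typo in the definition. Such a $\theta_\ell$ is unbounded on $(0,1)$ and satisfies $\theta_\ell(0)=0$, so it could not possibly give $(\mathcal E^{(\ell,\widehat\Gamma^{(1)})}w)|_{\widehat\Gamma^{(1)}}=w$ or vanish on the opposite edge $\widehat\Gamma^{(3)}$. The intended profile is the decaying one, $\theta_\ell(x)=\max\{0,1-x/\eta_\ell\}^p$, supported on $[0,\eta_\ell]$. This is why your integrals produced uncontrollable factors like $(\eta_\ell\eta_n)^{-p}$ and why the attempted patches became incoherent.

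Second, and more substantively, your target profile estimates are too weak even in form. You aim for $|(\theta_\ell,\theta_n)_{L_2}|\lesssim p^{-1}\sqrt{\widehat h_\ell\widehat h_n}$ and $|(\theta_\ell',\theta_n')_{L_2}|\lesssim p/\sqrt{\widehat h_\ell\widehat h_n}$, but with the correct compactly supported $\theta$'s a direct computation on $[0,\min\{\eta_\ell,\eta_n\}]$ gives the sharper
\[
(\theta_\ell,\theta_n)_{L_2(0,1)}\eqsim p^{-1}\min\{\widehat h_\ell,\widehat h_n\},
\qquad
(\theta_\ell',\theta_n')_{L_2(0,1)}\eqsim p\,(\max\{\widehat h_\ell,\widehat h_n\})^{-1}.
\]
These are precisely what the paper records in~\eqref{eq:lem:E:gamma:proof}. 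Combined with $|(w,q)_{H^1}|\le (\widehat h_\ell\widehat h_n)^{-1/2}(\widehat h_\ell|w|_{H^1}^2+\widehat h_n|q|_{H^1}^2)$ and the dual estimate for the $L_2$-pairing, the prefactor becomes
\[
\frac{\min\{\widehat h_\ell,\widehat h_n\}}{\sqrt{\widehat h_\ell\widehat h_n}}
=\frac{\sqrt{\widehat h_\ell\widehat h_n}}{\max\{\widehat h_\ell,\widehat h_n\}}
=\sqrt{\frac{\min\{\widehat h_\ell,\widehat h_n\}}{\max\{\widehat h_\ell,\widehat h_n\}}},
\]
which is how the square-root factor appears. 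With your geometric-mean profile bounds that factor would collapse to $1$, and the subsequent summation in Lemma~\ref{lem:hhsum} would fail. Once you correct $\theta_\ell$, the two one-dimensional integrals are elementary (substitute $u=1-x/\eta_\ell$ and bound the remaining factor by $1$) and the rest of your outline goes through verbatim.
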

\begin{proof}
	Observe that the definition of the extension operator and the tensor-product structure
	of the domain immediately imply
	\[
	\begin{aligned}
	&(\mathcal E^{(\ell,\widehat \Gamma^{(\delta)})} w,
	\mathcal E^{(n,\widehat \Gamma^{(\delta)})} q)_{H^1(\widehat\Omega)}
	\\&\quad=
	(\theta^{(\ell,\widehat \Gamma^{(\delta)})},\theta^{(n,\widehat \Gamma^{(\delta)})})_{L_2(0,1)}
	(w,q)_{H^1(\widehat \Gamma^{(\delta)})}
	+
	(\theta^{(\ell,\widehat \Gamma^{(\delta)})},\theta^{(n,\widehat \Gamma^{(\delta)})})_{H^1(0,1)}
	(w,q)_{L_2(\widehat \Gamma^{(\delta)})}.
	\end{aligned}
	\]
	By computing the corresponding integrals, we immediately obtain
	\begin{equation}\label{eq:lem:E:gamma:proof}
	\begin{aligned}
	&|(\mathcal E^{(\ell,\widehat \Gamma^{(\delta)})} w,
	\mathcal E^{(n,\widehat \Gamma^{(\delta)})} q)_{H^1(\widehat\Omega)}|
	\\
	&\qquad \eqsim
	p^{-1}\min\{\widehat{h}_\ell,\widehat{h}_n\}
	|(w,q)_{H^1(\widehat \Gamma^{(\delta)})}|
	+
	p(\max\{\widehat{h}_\ell,\widehat{h}_n\})^{-1}
	|(w,q)_{L_2(\widehat \Gamma^{(\delta)})}|.
	\end{aligned}
	\end{equation}
	Using the Cauchy-Schwarz inequality and the geometric-arithmetic mean inequality,
	we obtain
	\[
	(w,q)_{H^1(\widehat \Gamma^{(\delta)})}
	\le
	\sqrt{\frac{1}{\widehat{h}_\ell\widehat{h}_n}}
	(
	\widehat{h}_\ell
	|w|_{H^1(\widehat \Gamma^{(\delta)})}^2
	+
	\widehat{h}_n
	|q|_{H^1(\widehat \Gamma^{(\delta)})}^2
	)
	\]
	and
	\[
	(w,q)_{L_2(\widehat \Gamma^{(\delta)})}
	\le
	\sqrt{\widehat{h}_\ell\widehat{h}_n}
	(
	\widehat{h}_\ell^{-1}
	\|w\|_{L_2(\widehat \Gamma^{(\delta)})}^2
	+
	\widehat{h}_n^{-1}
	\|q\|_{L_2(\widehat \Gamma^{(\delta)})}^2
	).
	\]
	By plugging these estimates into~\eqref{eq:lem:E:gamma:proof}, we 
	obtain the desired result.
\end{proof}
The following lemma allows to estimate the vertex extensions by the function values on
one of the adjacent edges.
\begin{lemma}\label{lem:E:V}
	The estimate
	\[
	\begin{aligned}
	(\mathcal E^{(\ell,\widehat{\normalfont \textbf x}^{(\delta)})} w,
	\mathcal E^{(n,\widehat{\normalfont \textbf x}^{(\delta)})} q)_{H^1(\widehat\Omega)}^2
	\lesssim &
	\frac{\min \{\widehat{h}_\ell,\widehat{h}_n\}}{\max\{\widehat{h}_\ell,\widehat{h}_n\}}
	\big( p^{-1}\widehat{h}_\ell  |w|_{H^1(\widehat \Gamma^{(\delta)})}^2+ p^{-1}\widehat{h}_n  |q|_{H^1(\widehat \Gamma^{(\delta)})}^2
	\\&\qquad
	+p \widehat{h}_\ell^{-1} \|w\|_{L_2(\widehat \Gamma^{(\delta)})}^2+ p \widehat{h}_n^{-1}\|q\|_{L_2(\widehat \Gamma^{(\delta)})}^2 \big)
	\end{aligned}
	\]
	holds for all $w\in \mathcal W^{(\ell)}$, $q\in \mathcal W^{(n)}$ with $\ell,n=1,\ldots,L+1$
	and $\delta=1,\ldots,4$.
\end{lemma}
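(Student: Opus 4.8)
The plan is to follow the pattern of the proof of Lemma~\ref{lem:E:gamma}, but exploiting the extra structure that the vertex extension $\mathcal E^{(\ell,\widehat{\textbf x}^{(\delta)})}w$ equals the \emph{scalar} $w(\widehat{\textbf x}^{(\delta)})$ times a pure tensor product $\theta^{(\ell,\widehat\Gamma^{(\delta)})}(x)\,\theta^{(\ell,\widehat\Gamma^{(\delta')})}(y)$ of two univariate functions, where $\widehat\Gamma^{(\delta)}$, $\widehat\Gamma^{(\delta')}$ are the two edges of $\widehat\Omega$ meeting at $\widehat{\textbf x}^{(\delta)}$. Both of these $\theta$-factors carry the same length scale $\eta^{(\ell,\cdot)}\eqsim\widehat h_\ell$ by~\eqref{eq:etasize}, so it is harmless to assume that the edge occurring in the statement is one of the two adjacent edges and that $\widehat{\textbf x}^{(\delta)}$ is its endpoint. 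The key point is that, since the "boundary data" is now scalar, no arithmetic--geometric-mean splitting is needed, and this is exactly why we obtain the full ratio $\min/\max$ rather than its square root as in Lemma~\ref{lem:E:gamma}.

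First I would expand the $H^1$-inner product of two vertex extensions. Writing $\theta_x^{(\ell)},\theta_y^{(\ell)}$ for the two univariate factors at level $\ell$, the tensor-product form of $\widehat\Omega=(0,1)^2$ gives
\[
\big(\mathcal E^{(\ell,\widehat{\textbf x}^{(\delta)})}w,\mathcal E^{(n,\widehat{\textbf x}^{(\delta)})}q\big)_{H^1(\widehat\Omega)}
= w(\widehat{\textbf x}^{(\delta)})\,q(\widehat{\textbf x}^{(\delta)})\Big[(\theta_x^{(\ell)},\theta_x^{(n)})_{H^1(0,1)}(\theta_y^{(\ell)},\theta_y^{(n)})_{L_2(0,1)}+(\theta_x^{(\ell)},\theta_x^{(n)})_{L_2(0,1)}(\theta_y^{(\ell)},\theta_y^{(n)})_{H^1(0,1)}\Big].
\]
Then I would reuse the one-dimensional integral estimates obtained inside the proof of Lemma~\ref{lem:E:gamma}, namely $|(\theta^{(\ell,\widehat\Gamma^{(\delta)})},\theta^{(n,\widehat\Gamma^{(\delta)})})_{L_2(0,1)}|\lesssim p^{-1}\min\{\widehat h_\ell,\widehat h_n\}$ and $|(\theta^{(\ell,\widehat\Gamma^{(\delta)})},\theta^{(n,\widehat\Gamma^{(\delta)})})_{H^1(0,1)}|\lesssim p\,(\max\{\widehat h_\ell,\widehat h_n\})^{-1}$ (both are integrals of products of powers of affine functions, using $\eta^{(\ell,\cdot)}\eqsim\widehat h_\ell$). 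Each of the two products in the bracket is thus $\lesssim (p^{-1}\min\{\widehat h_\ell,\widehat h_n\})(p\,(\max\{\widehat h_\ell,\widehat h_n\})^{-1})=\min\{\widehat h_\ell,\widehat h_n\}/\max\{\widehat h_\ell,\widehat h_n\}$, which yields
\[
\big|\big(\mathcal E^{(\ell,\widehat{\textbf x}^{(\delta)})}w,\mathcal E^{(n,\widehat{\textbf x}^{(\delta)})}q\big)_{H^1(\widehat\Omega)}\big|\lesssim \frac{\min\{\widehat h_\ell,\widehat h_n\}}{\max\{\widehat h_\ell,\widehat h_n\}}\,|w(\widehat{\textbf x}^{(\delta)})|\,|q(\widehat{\textbf x}^{(\delta)})|.
\]

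The only genuinely new ingredient is a $p$-explicit point-trace estimate on the unit-length edge $\widehat\Gamma^{(\delta)}$: for every $v\in H^1(\widehat\Gamma^{(\delta)})$, its endpoint $\widehat{\textbf x}^{(\delta)}$, and every $\widehat h$ with $\widehat h\lesssim 1$,
\[
|v(\widehat{\textbf x}^{(\delta)})|^2\lesssim p\,\widehat h^{-1}\|v\|_{L_2(\widehat\Gamma^{(\delta)})}^2+p^{-1}\widehat h\,|v|_{H^1(\widehat\Gamma^{(\delta)})}^2.
\]
I would prove this by the fundamental theorem of calculus, with no use of spline structure: parametrize $\widehat\Gamma^{(\delta)}$ by $(0,1)$ with $0\leftrightarrow\widehat{\textbf x}^{(\delta)}$, set $a:=\min\{1,\widehat h/p\}\eqsim\widehat h/p$ (using $\widehat h\lesssim1\le p$), write $v(0)=v(x)-\int_0^x v'(t)\,dt$ for $x\in(0,a)$, apply Cauchy--Schwarz to the integral, square, integrate over $x\in(0,a)$ and divide by $a$; this gives $|v(0)|^2\lesssim a^{-1}\|v\|_{L_2(0,a)}^2+a\,|v|_{H^1(0,a)}^2$, and inserting $a\eqsim\widehat h/p$ produces the claimed factors. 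Applying this with $\widehat h=\widehat h_\ell$ to $v=w\in\mathcal W^{(\ell)}$ and with $\widehat h=\widehat h_n$ to $v=q\in\mathcal W^{(n)}$, and then using $|w(\widehat{\textbf x}^{(\delta)})|\,|q(\widehat{\textbf x}^{(\delta)})|\le\tfrac12(|w(\widehat{\textbf x}^{(\delta)})|^2+|q(\widehat{\textbf x}^{(\delta)})|^2)$ in the previous display, gives exactly the asserted bound.

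I expect the only real difficulty to be organizational: checking that for each vertex $\widehat{\textbf x}^{(\delta)}$ one of its adjacent edges is $\widehat\Gamma^{(\delta)}$ and that both tensor factors of the corresponding vertex extension scale like $\widehat h_\ell$, and being careful to take the integration interval in the point-trace step of length $\eqsim\widehat h/p$ (not $\eqsim\widehat h$), which is precisely what puts the factor $p^{-1}$ — rather than $p$ — on the $H^1$-term.
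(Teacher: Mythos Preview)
Your proposal is correct and follows essentially the same path as the paper's proof: expand the $H^1$ inner product using the tensor-product structure, reuse the one-dimensional $\theta$-integral estimates from the proof of Lemma~\ref{lem:E:gamma} to get the $\min/\max$ factor times $|w(\widehat{\textbf x}^{(\delta)})|\,|q(\widehat{\textbf x}^{(\delta)})|$, bound the latter by the sum of squares, and then control each point value by edge norms with the correct $p$-weights. The only cosmetic difference is in the point-trace step, where the paper invokes the standard Agmon-type inequality $|w(\widehat{\textbf x}^{(\delta)})|^2\lesssim\|w\|_{L_2(\widehat\Gamma^{(\delta)})}\|w\|_{H^1(\widehat\Gamma^{(\delta)})}$ followed by a weighted Young inequality, while you prove the same bound directly via the fundamental theorem of calculus over an interval of length $\eqsim\widehat h/p$; both arguments are elementary and yield the identical estimate.
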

\begin{proof}
	Let $\delta$ be arbitrary but fixed and observe that the edges adjacent to
	$\widehat{\textbf x}^{(\delta)}$ are $\widehat \Gamma^{(\delta-1)}$ and $\widehat \Gamma^{(\delta)}$,
	where we make use of $\widehat \Gamma^{(0)}:=\widehat \Gamma^{(4)}$. Direct computations
	yield
	\begin{equation}\label{eq:lem:E:V:proof}
	\begin{aligned}
	&|(\mathcal E^{(\ell,\widehat{\textbf x}^{(\delta)})} w,
	\mathcal E^{(n,\widehat{\textbf x}^{(\delta)})} q)_{H^1(\widehat\Omega)}^2|\\
	&\qquad=
	\Big(
	(\theta^{(\ell,\widehat \Gamma^{(\delta-1)})},\theta^{(n,\widehat \Gamma^{(\delta-1)})})_{L_2(0,1)}
	(\theta^{(\ell,\widehat \Gamma^{(\delta)})},\theta^{(n,\widehat \Gamma^{(\delta)})})_{H^1(0,1)}
	\\&\qquad\qquad+
	(\theta^{(\ell,\widehat \Gamma^{(\delta-1)})},\theta^{(n,\widehat \Gamma^{(\delta-1)})})_{H^1(0,1)}
	(\theta^{(\ell,\widehat \Gamma^{(\delta)})},\theta^{(n,\widehat \Gamma^{(\delta)})})_{L_2(0,1)}
	\Big)
	|w(\widehat{\textbf x}^{(\delta)})|\,|q(\widehat{\textbf x}^{(\delta)})|\\
	&\qquad\eqsim 
	\frac{\min \{\widehat{h}_\ell,\widehat{h}_n\}}{\max\{\widehat{h}_\ell,\widehat{h}_n\}}
	|w(\widehat{\textbf x}^{(\delta)})|\,|q(\widehat{\textbf x}^{(\delta)})|
	\le
	\frac{\min \{\widehat{h}_\ell,\widehat{h}_n\}}{\max\{\widehat{h}_\ell,\widehat{h}_n\}}
	(|w(\widehat{\textbf x}^{(\delta)})|^2+|q(\widehat{\textbf x}^{(\delta)})|^2)
	\end{aligned}
	\end{equation}
	Using standard estimates, we obtain
	\[
	|w(\widehat{\textbf x}^{(\delta)})|^2 \lesssim \|w\|_{L_2(\widehat \Gamma^{(\delta)})}\|w\|_{H^1(\widehat \Gamma^{(\delta)})}
	\lesssim 
	p\widehat{h}_\ell^{-1} \|w\|_{L_2(\widehat \Gamma^{(\delta)})}^2
	+p^{-1} \widehat{h}_\ell|w|_{H^1(\widehat \Gamma^{(\delta)})}^2
	\]
	and an analogous statement for $q$. By plugging these results into~\eqref{eq:lem:E:V:proof},
	we obtain the desired result.
\end{proof}

\begin{lemma}\label{lem:hhsum}
	The relation 
	$\sum_{\ell=1}^{L+1} \frac{\min \{\widehat{h}_\ell,\widehat{h}_n\}}{\max\{\widehat{h}_\ell,\widehat{h}_n\}}
	\eqsim \sum_{\ell=1}^{L+1} \sqrt{\frac{\min \{\widehat{h}_\ell,\widehat{h}_n\}}{\max\{\widehat{h}_\ell,\widehat{h}_n\}}} \eqsim 1$
	holds for all $n=1,\ldots,{L+1}$.
\end{lemma}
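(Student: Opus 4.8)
The plan is to exploit the fact that the grid sizes $\widehat h_\ell$ are, by their very definition in~\eqref{eq:gridsize}, an exact geometric progression: $\widehat h_\ell = 4^{L-\ell}\,\widehat h$. Hence for any indices $\ell,n$ one has $\widehat h_\ell/\widehat h_n = 4^{\,n-\ell}$, and therefore
\[
\frac{\min\{\widehat h_\ell,\widehat h_n\}}{\max\{\widehat h_\ell,\widehat h_n\}} = 4^{-|\ell-n|}
\qquad\mbox{and}\qquad
\sqrt{\frac{\min\{\widehat h_\ell,\widehat h_n\}}{\max\{\widehat h_\ell,\widehat h_n\}}} = 2^{-|\ell-n|}.
\]
So both sums in the statement are finite truncations of the convergent bi-infinite geometric series $\sum_{j\in\mathbb Z} 4^{-|j|}$ and $\sum_{j\in\mathbb Z} 2^{-|j|}$.

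For the lower bounds I would simply retain the single term $\ell=n$, which equals $1$ in either sum; this gives $\sum_{\ell=1}^{L+1} 4^{-|\ell-n|} \ge 1$ and $\sum_{\ell=1}^{L+1} 2^{-|\ell-n|} \ge 1$. For the upper bounds I would drop the restriction $1\le\ell\le L+1$, enlarging the index set to all of $\mathbb Z$, and sum the geometric series explicitly: $\sum_{\ell=1}^{L+1} 4^{-|\ell-n|} \le 1 + 2\sum_{j\ge 1} 4^{-j} = 5/3$ and $\sum_{\ell=1}^{L+1} 2^{-|\ell-n|} \le 1 + 2\sum_{j\ge 1} 2^{-j} = 3$, with constants independent of $L$ and $n$. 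Since moreover $4^{-|\ell-n|}\le 2^{-|\ell-n|}$ termwise, chaining $1\le\sum_{\ell=1}^{L+1} 4^{-|\ell-n|}\le\sum_{\ell=1}^{L+1} 2^{-|\ell-n|}\le 3$ yields that both quantities are $\eqsim 1$, which is the assertion.

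I do not expect any genuine obstacle here: once one records that the $\widehat h_\ell$ form an exact geometric sequence with ratio $4$, the claim is nothing more than the boundedness (above and below) of a convergent geometric sum, uniformly in the number of levels $L$ and in the reference level $n$.
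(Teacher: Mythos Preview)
Your proof is correct and follows essentially the same approach as the paper: both arguments use that $\widehat h_\ell = 4^{L-\ell}\widehat h$ so that the summands are $4^{-|\ell-n|}$ (respectively $2^{-|\ell-n|}$), and then bound the resulting truncated geometric series uniformly in $n$ and $L$. The paper splits the sum at $\ell=n$ and writes $\sum_{\ell=1}^n 4^{\ell-n}+\sum_{\ell=n+1}^{L+1}4^{n-\ell}\eqsim 1$, whereas you extend to the bi-infinite sum and compute the explicit constants $5/3$ and $3$; these are cosmetic differences only.
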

\begin{proof}
	By splitting the sum and using the summation formula for the geometric series, we obtain
	\begin{align*}
		\sum_{\ell=1}^{L+1} \frac{\min \{\widehat{h}_\ell,\widehat{h}_n\}}{\max\{\widehat{h}_\ell,\widehat{h}_n\}}
		= \sum_{\ell=1}^n \frac{\widehat{h}_n}{\widehat{h}_\ell}
		+\sum_{\ell=n+1}^{L+1} \frac{\widehat{h}_\ell}{\widehat{h}_n}
		= \sum_{\ell=1}^n 4^{\ell-n} + \sum_{\ell=n+1}^{L+1}4^{n-\ell} \eqsim 1.
	\end{align*}
	The proof of $\sum_{\ell=1}^{L+1} \sqrt{\frac{\min \{\widehat{h}_\ell,\widehat{h}_n\}}{\max\{\widehat{h}_\ell,\widehat{h}_n\}}}\eqsim 1$ can be done analogously.
\end{proof}
Using these estimates, we are now able to show that the overall extension operator $\mathcal E$
is bounded as follows.
\begin{lemma}\label{lem:e:estimate}
	The estimate
	$
	|\mathcal E w|_{H^1(\widehat\Omega)}^2
	\lesssim p 
	|w|_{H^{1/2}(\partial\widehat\Omega)}^2
	$
	holds for all for all $w\in \mathcal W$.
\end{lemma}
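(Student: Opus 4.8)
\emph{Proof proposal.} The plan is to exploit the multilevel structure of $\mathcal E$. Setting $w_\ell:=(Q_\ell-Q_{\ell-1})w\in\mathcal W^{(\ell)}$, definition~\eqref{eq:e:def} gives $\mathcal E w=\sum_{\ell=1}^{L+1}\mathcal E^{(\ell)}w_\ell$, hence by bilinearity
\[
|\mathcal E w|_{H^1(\widehat\Omega)}^2=\sum_{\ell,n=1}^{L+1}(\mathcal E^{(\ell)}w_\ell,\mathcal E^{(n)}w_n)_{H^1(\widehat\Omega)}.
\]
Inserting the definition of $\mathcal E^{(\ell)}$ as a combination of four edge- and four vertex-extensions, it suffices to bound the $H^1(\widehat\Omega)$-inner product of each constituent of $w_\ell$ against each constituent of $w_n$.

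I would classify these pairwise contributions. When both factors belong to the same edge, Lemma~\ref{lem:E:gamma} applies; when both belong to the same vertex, Lemma~\ref{lem:E:V} applies (and $\min/\max\le\sqrt{\min/\max}$). When the two factors are attached to geometrically unrelated parts of $\partial\widehat\Omega$ (two opposite edges, two distinct vertices, or an edge and a vertex not lying on it), the supports are essentially disjoint — the strip widths are $\eta^{(\ell,\cdot)}\eqsim\widehat h_\ell$, $\eta^{(n,\cdot)}\eqsim\widehat h_n$ — and, where they do meet (only for the $O(1)$ coarsest levels), the $p$-th power decay of the blending functions $\theta^{(\ell,\cdot)}$ away from their edge makes the overlap contribution harmless. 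The genuinely new pairs are an edge-extension of $w_\ell$ paired with an edge- or vertex-extension of $w_n$ sharing a corner of $\widehat\Omega$: there the two supports overlap only in a square of side $\eqsim\min\{\widehat h_\ell,\widehat h_n\}$ about that corner, and a direct computation — entirely parallel to the proofs of Lemmas~\ref{lem:E:gamma} and~\ref{lem:E:V}, with the perpendicular integrals of $\theta$ and $\theta'$ restricted to that short interval, followed by the Cauchy–Schwarz / geometric–arithmetic-mean steps used there — yields the same type of estimate. In all cases, with $E_m:=p^{-1}\widehat h_m\,|w_m|_{H^1(\partial\widehat\Omega)}^2+p\,\widehat h_m^{-1}\,\|w_m\|_{L_2(\partial\widehat\Omega)}^2$,
\[
\big|(\mathcal E^{(\ell)}w_\ell,\mathcal E^{(n)}w_n)_{H^1(\widehat\Omega)}\big|\;\lesssim\;\sqrt{\frac{\min\{\widehat h_\ell,\widehat h_n\}}{\max\{\widehat h_\ell,\widehat h_n\}}}\;(E_\ell+E_n),
\]
where the per-edge quantities have been bounded by those on all of $\partial\widehat\Omega$. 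I expect the estimate for these corner pairs to be the main technical obstacle.

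Next, summing over $\ell$ and $n$ and using Lemma~\ref{lem:hhsum} twice (once for $\sum_n$, once for $\sum_\ell$) gives
\[
|\mathcal E w|_{H^1(\widehat\Omega)}^2\;\lesssim\;\sum_{\ell=1}^{L+1}E_\ell\;=\;\sum_{\ell=1}^{L+1}p^{-1}\widehat h_\ell\,|w_\ell|_{H^1(\partial\widehat\Omega)}^2\;+\;p\sum_{\ell=1}^{L+1}\widehat h_\ell^{-1}\,\|w_\ell\|_{L_2(\partial\widehat\Omega)}^2.
\]
The second sum is $\lesssim p\,|w|_{H^{1/2}(\partial\widehat\Omega)}^2$ by Lemma~\ref{lem:approx:qsum}. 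In the first sum, for $\ell\le L$ the space $\mathcal W^{(\ell)}$ is periodic and of maximum smoothness, so the inverse estimate of Lemma~\ref{lem:inverse:q} yields $|w_\ell|_{H^1(\partial\widehat\Omega)}^2\lesssim\widehat h_\ell^{-2}\|w_\ell\|_{L_2(\partial\widehat\Omega)}^2$, hence $p^{-1}\widehat h_\ell|w_\ell|_{H^1(\partial\widehat\Omega)}^2\lesssim p^{-1}\widehat h_\ell^{-1}\|w_\ell\|_{L_2(\partial\widehat\Omega)}^2$, which is absorbed into the second sum.

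It then remains to control the single term $p^{-1}\widehat h_{L+1}\,|w_{L+1}|_{H^1(\partial\widehat\Omega)}^2$ with $w_{L+1}=(I-Q_L)w$; here $\mathcal W^{(L+1)}$ is a general spline space, so I would instead use the $H^1$–$H^{1/2}$ inverse estimate of Lemma~\ref{lem:inverse:half}, giving $p^{-1}\widehat h_{L+1}\,|w_{L+1}|_{H^1(\partial\widehat\Omega)}^2\lesssim p\,|w_{L+1}|_{H^{1/2}(\partial\widehat\Omega)}^2$, and then bound $|(I-Q_L)w|_{H^{1/2}(\partial\widehat\Omega)}^2\lesssim|w|_{H^{1/2}(\partial\widehat\Omega)}^2+|Q_Lw|_{H^{1/2}(\partial\widehat\Omega)}^2$. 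For the last term one uses $|Q_Lw|_{H^{1/2}(\partial\widehat\Omega)}^2\lesssim\sum_{\ell=1}^{L}\widehat h_\ell^{-1}\|(Q_\ell-Q_{\ell-1})w\|_{L_2(\partial\widehat\Omega)}^2$ — a standard multilevel (strengthened Cauchy–Schwarz) estimate on the nested spaces $\mathcal W^{(\ell)}$, $\ell\le L$, licensed by their $p$-robust inverse estimates — and then Lemma~\ref{lem:approx:qsum} once more. Collecting all contributions gives $|\mathcal E w|_{H^1(\widehat\Omega)}^2\lesssim p\,|w|_{H^{1/2}(\partial\widehat\Omega)}^2$.
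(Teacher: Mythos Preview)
Your overall strategy---multilevel decomposition $w_\ell=(Q_\ell-Q_{\ell-1})w$, bilinear expansion, per-level edge/vertex extension bounds, summation via Lemma~\ref{lem:hhsum}, and Lemma~\ref{lem:approx:qsum} for the $L_2$-part---matches the paper. However, you overcomplicate two steps where the paper is much cleaner.

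\textbf{Cross-type terms.} You expand the full bilinear form and then face all pairings of the eight constituents of $\mathcal E^{(\ell)}$ against those of $\mathcal E^{(n)}$: same edge, same vertex, adjacent edges sharing a corner, opposite edges, edge against unrelated vertex, and so on. Your assertion that the ``corner'' pairings (e.g.\ $\mathcal E^{(\ell,\widehat\Gamma^{(1)})}w_\ell$ against $\mathcal E^{(n,\widehat\Gamma^{(2)})}w_n$) yield the same $\sqrt{\min/\max}$ decay after a computation ``entirely parallel to the proofs of Lemmas~\ref{lem:E:gamma} and~\ref{lem:E:V}'' is not obvious: those lemmas exploit a tensor-product factorisation which does not persist for mixed edges, and a crude Cauchy--Schwarz on the overlap square gives only $E_\ell+E_n$ without the geometric decay, costing an extra factor $L\eqsim\log(1/\widehat h)$ after the double sum. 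The paper sidesteps all of this with a single line: since $\mathcal E^{(\ell)}$ is a sum of only eight operators, apply the triangle inequality (equivalently, $|\sum_{j=1}^8 a_j|^2\le 8\sum_j|a_j|^2$) \emph{first}, obtaining
\[
|\mathcal E w|_{H^1(\widehat\Omega)}^2
\lesssim
\sum_{\delta=1}^4\Big|\sum_{\ell}\mathcal E^{(\ell,\widehat\Gamma^{(\delta)})}w_\ell\Big|_{H^1}^2
+\sum_{\delta=1}^4\Big|\sum_{\ell}\mathcal E^{(\ell,\widehat{\textbf x}^{(\delta)})}w_\ell\Big|_{H^1}^2,
\]
so only the ``diagonal'' Lemmas~\ref{lem:E:gamma} and~\ref{lem:E:V} are ever needed.

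\textbf{The top-level term $w_{L+1}$.} Your route requires $H^{1/2}$-stability of $Q_L$, which you reduce to a strengthened Cauchy--Schwarz / multilevel norm equivalence on the spaces $\mathcal W^{(\ell)}$. That is plausible but is an extra ingredient not provided in the paper. The paper instead derives directly
\[
|w_{L+1}|_{H^1(\partial\widehat\Omega)}^2\lesssim p^2\widehat h^{-1}\|w\|_{H^{1/2}(\partial\widehat\Omega)}^2
\]
by splitting $(I-Q_L)w=(I-\Pi_L)w+(\Pi_L-Q_L)w$: the first piece is handled by Lemma~\ref{lem:inverse:half} plus $H^{1/2}$-stability of $\Pi_L$ (trivial by interpolation between its $H^1$- and $L_2$-stability), and the second piece lies in $\mathcal W^{(L)}$, so the $p$-robust inverse estimate of Lemma~\ref{lem:inverse:q} together with Lemmas~\ref{lem:approx:q} and~\ref{lem:approx:pi} applies. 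This avoids any multilevel stability argument for $Q_L$. Finally, both approaches actually produce $\|w\|_{H^{1/2}}^2$ on the right; the paper closes with a Poincar\'e-type argument to pass to the seminorm, which you omit.
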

\begin{proof}
	Let $w\in \mathcal W$ be arbitrary but fixed and define
	$w_\ell := (Q_\ell -Q_{\ell-1})w$ for $\ell=1,\ldots,L+1$.
	Observe that the definition and the triangle inequality yield
	\[
	|w_{L+1}|_{H^1(\partial\widehat\Omega)}^2
	=
	|(I-Q_L)w|_{H^1(\partial\widehat\Omega)}^2
	\le
	|(I-\Pi_L)w|_{H^1(\partial\widehat\Omega)}^2
	+ |(\Pi_L-Q_L)w|_{H^1(\partial\widehat\Omega)}^2,
	\]
	where $\Pi_L$ is the $H^1$-orthogonal projection.  Using
	stability and Lemma~\ref{lem:inverse:half}, we obtain
	$\|(I-\Pi_L) w\|_{H^1(\partial\widehat\Omega)}^2	 \lesssim p^2 \widehat h^{-1}
	\|w\|_{H^{1/2}(\partial\widehat\Omega)}^2$. Using this and 
	Lemma~\ref{lem:inverse:q} and the triangle inequality, we obtain
	\[
	|w_{L+1}|_{H^1(\partial\widehat\Omega)}^2
	\lesssim
	p^2 \widehat h^{-1} \|w\|_{H^{1/2}(\partial\widehat\Omega)}^2
	+ \widehat h^{-2} \|(I-\Pi_L)w\|_{L_2(\partial\widehat\Omega)}^2
	+ \widehat h^{-2} \|(I-Q_L)w\|_{L_2(\partial\widehat\Omega)}^2.
	\]
	Using Lemmas~\ref{lem:approx:q} and~\ref{lem:approx:pi}, we further obtain
	\begin{equation}\label{eq:w:ell:estim}
	|w_{L+1}|_{H^1(\partial\widehat\Omega)}^2
	\lesssim
	p^2 \widehat h^{-1} \|w\|_{H^{1/2}(\partial\widehat\Omega)}^2.
	\end{equation}

	Observe that \eqref{eq:e:def},
	the triangle inequality yield,
	and Assumption~\ref{ass:neighbors}
	\begin{equation}\label{eq:lem:fin:1}
	|{\mathcal E} w|_{H^1(\widehat\Omega)}^2
	\lesssim
	\sum_{\delta=1}^4
	\left|\sum_{\ell=1}^{L+1} \mathcal E^{(\ell,\widehat \Gamma^{(\delta)})} w_\ell
	\right|_{H^1(\widehat\Omega)}^2		
	+ \sum_{\delta=1}^4
	\left|\sum_{\ell=1}^{L+1} \mathcal E^{(\ell,\widehat{\textbf x}^{(\delta)})} w_\ell
	\right|_{H^1(\widehat\Omega)}^2	.
	\end{equation}
	Using Lemmas~\ref{lem:E:gamma} and~\ref{lem:hhsum}, we obtain
	\begin{align*}
		\left|\sum_{\ell=1}^{L+1} \mathcal E^{(\ell,\widehat \Gamma^{(\delta)})} w_\ell
		\right|_{H^1(\widehat\Omega)}^2	
		&= \sum_{\ell=1}^{L+1}\sum_{n=1}^{L+1}
		\left( \mathcal E^{(\ell,\widehat \Gamma^{(\delta)})} w_\ell
		, \mathcal E^{(n,\widehat \Gamma^{(\delta)})} w_n
		\right)_{H^1(\widehat\Omega)}\\
		&\lesssim  p^{-1} \sum_{\ell=1}^{L+1} \widehat h_\ell  |w_\ell |_{H^1(\partial\widehat\Omega)}^2+
		p \sum_{\ell=1}^{L+1} \widehat h_\ell^{-1} \|w_\ell\|_{L_2(\partial\widehat\Omega)}^2.
	\end{align*}
	Analogously, we obtain using
	Lemmas~\ref{lem:E:V} and~\ref{lem:hhsum}
	\[
	\left|\sum_{\ell=1}^{L+1} \mathcal E^{(\ell,\widehat{\textbf x}^{(\delta)})} w_\ell
	\right|_{H^1(\widehat\Omega)}^2	
	\lesssim  p^{-1} \sum_{\ell=1}^{L+1} \widehat h_\ell  |w_\ell |_{H^1(\partial\widehat\Omega)}^2+
	p \sum_{\ell=1}^{L+1} \widehat h_\ell^{-1} \|w_\ell\|_{L_2(\partial\widehat\Omega)}^2.
	\]
	By plugging these two estimates into~\eqref{eq:lem:fin:1}, we obtain
	\[
	|{\mathcal E} w|_{H^1(\widehat\Omega)}^2
	\lesssim  p^{-1} \sum_{\ell=1}^{L+1} \widehat h_\ell  |w_\ell |_{H^1(\partial\widehat\Omega)}^2+
	p \sum_{\ell=1}^{L+1} \widehat h_\ell^{-1} \|w_\ell\|_{L_2(\partial\widehat\Omega)}^2.
	\]
	Using~\eqref{eq:w:ell:estim} and Lemma~\ref{lem:inverse:q}, we obtain further
	\[
	\begin{aligned}
	|{\mathcal E} w|_{H^1(\widehat\Omega)}^2
	&\lesssim p \|w\|_{H^{1/2}(\partial\widehat\Omega)}^2
	+ p^{-1} \sum_{\ell=1}^{L} \widehat h_\ell^{-1} \|w_\ell\|_{L_2(\partial\widehat\Omega)}^2
	+ p \sum_{\ell=1}^{L+1} \widehat h_\ell^{-1} \|w_\ell\|_{L_2(\partial\widehat\Omega)}^2\\
	&\lesssim  p \|w\|_{H^{1/2}(\partial\widehat\Omega)}^2
	+ p \sum_{\ell=1}^{L+1} \widehat h_\ell^{-1} \|w_\ell\|_{L_2(\partial\widehat\Omega)}^2.
	\end{aligned}
	\]
	Using $w_\ell=(Q_\ell-Q_{\ell-1})w$ and Lemma~\ref{lem:approx:qsum}, we obtain further
	\[
	|{\mathcal E} w|_{H^1(\widehat\Omega)}^2
	\lesssim  p \|w\|_{H^{1/2}(\partial\widehat\Omega)}^2.
	\]
	A Poincar{\'e} type argument finishes the proof.
\end{proof}

Since we have an estimate for the extension operator $\mathcal E$, we can also give a corresponding
estimate for the discrete harmonic extension $\mathcal H_h$. Before we do this, we state
a standard result on the equivalence of the norms between physical domain and parameter domain.\begin{lemma}\label{lem:geoequiv}
	For all patches $k=1,\ldots,K$, we have
	\begin{itemize}
		\item
		$|u|_{H^s(\Omega^{(k)})} \eqsim (H^{(k)})^{1-s} | u\circ G_k |_{H^s(\widehat\Omega)}$
		for all $u\in H^s(\Omega^{(k)})$ and $s\in \{0,1\}$, and
		\item
		$|w|_{H^s(\partial\Omega^{(k)})} \eqsim (H^{(k)})^{1/2-s} | w\circ G_k |_{H^s(\partial\widehat\Omega)}$ for all $w\in H^{1/2}(\partial \Omega^{(k)})$ and $s\in~\{0,1/2,1\}$,
	\end{itemize}
	where we use the notation $H^0:=L_2$. The same holds if
	$\partial\Omega^{(k)}$ and $\partial\widehat \Omega$ are replaced by $\Gamma^{(k,\ell)}$
	and $\widehat\Gamma^{(k,\ell)}$, respectively.
\end{lemma}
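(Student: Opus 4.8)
The plan is to reduce all six equivalences to an elementary change of variables whose constants are controlled by Assumption~\ref{ass:nabla}. First I would record the pointwise consequences of that assumption: since $\|\nabla G_k\|_{L_\infty(\widehat\Omega)}\lesssim H^{(k)}$ and $\|(\nabla G_k)^{-1}\|_{L_\infty(\widehat\Omega)}\lesssim (H^{(k)})^{-1}$, all singular values of $\nabla G_k$ lie almost everywhere in an interval $[c\,H^{(k)},C\,H^{(k)}]$, so $|\det\nabla G_k|\eqsim (H^{(k)})^2$ on $\widehat\Omega$ and $G_k$, $G_k^{-1}$ are Lipschitz with constants $\lesssim H^{(k)}$ and $\lesssim (H^{(k)})^{-1}$. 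It is convenient to factor $G_k=H^{(k)}\widetilde G_k$ and use the classical scaling of Sobolev seminorms under the dilation $x\mapsto x/H^{(k)}$ to peel off the factors $(H^{(k)})^{1-s}$ and $(H^{(k)})^{1/2-s}$; it then remains to prove the six equivalences for $\widetilde G_k$ onto $\widetilde\Omega^{(k)}:=\widetilde G_k(\widehat\Omega)$, whose bi-Lipschitz constants are independent of $H^{(k)}$.

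For the interior estimates I would substitute $x=\widetilde G_k(\widehat x)$. For $s=0$ this is $\|u\|_{L_2(\widetilde\Omega^{(k)})}^2=\int_{\widehat\Omega}|u\circ\widetilde G_k|^2\,|\det\nabla\widetilde G_k|\,\mathrm d\widehat x\eqsim\|u\circ\widetilde G_k\|_{L_2(\widehat\Omega)}^2$. For $s=1$ I would combine the chain rule $\nabla(u\circ\widetilde G_k)=(\nabla\widetilde G_k)^\top\big((\nabla u)\circ\widetilde G_k\big)$ with the two bounds of Assumption~\ref{ass:nabla} to get $|(\nabla u)\circ\widetilde G_k|\eqsim|\nabla(u\circ\widetilde G_k)|$ pointwise, and then integrate against $|\det\nabla\widetilde G_k|\eqsim 1$. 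All inequalities used are two-sided, so both directions follow simultaneously.

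For the boundary estimates I would fix an affine arc-length parametrization $\widehat\gamma$ of each edge of $\partial\widehat\Omega$; then $\widetilde G_k\circ\widehat\gamma$ parametrizes the corresponding edge of $\partial\widetilde\Omega^{(k)}$ with speed $|\nabla\widetilde G_k\,\widehat\gamma'|\eqsim 1$, so the arc-length elements satisfy $\mathrm ds\eqsim\mathrm d\widehat s$ edgewise. The case $s=0$ is exactly this measure comparison; the case $s=1$ follows by additionally expressing the unit-speed tangential derivative of $w$ at $\widetilde G_k(\widehat x)$ as $\partial_{\widehat s}(w\circ\widetilde G_k)$ divided by that speed. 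These computations carry over verbatim when $\partial\Omega^{(k)}$, $\partial\widehat\Omega$ are replaced by a single edge $\Gamma^{(k,\ell)}$, $\widehat\Gamma^{(k,\ell)}$, since each pre-image $\widehat\Gamma^{(k,\ell)}$ is a straight segment.

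I expect the case $s=1/2$ to be the main obstacle, since it is the one that is not a plain substitution. I would handle it by interpolation: on the Lipschitz curves $\partial\widetilde\Omega^{(k)}$ and $\partial\widehat\Omega$ the space $H^{1/2}$ coincides with $[L_2,H^1]_{1/2}$, cf.\ the theorems of Adams--Fournier already cited in this section, and by the previous paragraph $w\mapsto w\circ\widetilde G_k$ is an isomorphism with constants $\eqsim 1$ on both end-point spaces, hence on the interpolation space; after the routine passage from norms to seminorms via a Poincar\'e inequality on the compact curve, this yields the claimed equivalence of the $H^{1/2}$-seminorms. Alternatively one can work directly with the Sobolev--Slobodeckij double integral used elsewhere in the paper, which reduces the statement to the comparison $|\widetilde G_k(\widehat x)-\widetilde G_k(\widehat y)|\eqsim|\widehat x-\widehat y|$ for boundary points; its upper bound is immediate from the Lipschitz property and the convexity of $\widehat\Omega$, while its lower bound is the delicate point, requiring quasiconvexity of $\overline{\widetilde\Omega^{(k)}}$. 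Finally, undoing the dilation $x\mapsto H^{(k)}x$ in each of the six cases reinstates the stated powers of $H^{(k)}$, and the same reasoning gives the variant for $\Gamma^{(k,\ell)}$ and $\widehat\Gamma^{(k,\ell)}$.
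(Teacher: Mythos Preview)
Your proposal is correct and follows essentially the same route as the paper: the integer cases $s\in\{0,1\}$ are obtained from Assumption~\ref{ass:nabla} via substitution and the chain rule, and the case $s=1/2$ is then deduced by Hilbert space interpolation between the end-point spaces. The paper's proof is terse (it merely cites Lemma~3.5 in Ref.~\refcite{Bazilevs} for the integer cases and the interpolation theorems in Ref.~\refcite{AdamsFournier:2003} for $s=1/2$), whereas you spell out the same mechanism in more detail, including the convenient rescaling $G_k=H^{(k)}\widetilde G_k$ to factor out the powers of $H^{(k)}$.
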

\begin{proof}
	The results for $s\in\{0,1\}$ directly follow from Assumption~\ref{ass:nabla}
	and the chain rule for differentiation and the substitution rule, see, e.g,
	Lemma~3.5 in Ref.~\refcite{Bazilevs}.
	The results for $s=1/2$ are then obtained by Hilbert space interpolation theory,
	cf. Theorems~7.23 and 7.31 in Ref.~\refcite{AdamsFournier:2003}.
\end{proof}

Finally, we can state the main theorem of this section.
\begin{theorem}\label{thrm:discrharm:ext}
	The estimate
	\[
	|w|_{H^{1/2}(\partial\Omega^{(k)})}^2
	\lesssim
	|\mathcal H_h w|_{H^1(\Omega^{(k)})}^2
	\lesssim p |w|_{H^{1/2}(\partial\Omega^{(k)})}^2
	\]
	holds for all $w \in W^{(k)}$.
\end{theorem}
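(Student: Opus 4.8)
The statement splits into a lower bound (a trace estimate that holds for \emph{any} extension) and an upper bound (where the specific structure of $\mathcal H_h$ and the work invested in Lemma~\ref{lem:e:estimate} come into play). In both directions the strategy is to pull everything back to the fixed reference square $\widehat\Omega=(0,1)^2$ via Lemma~\ref{lem:geoequiv}, exploiting that the geometry-scaling exponents $1-s$ and $1/2-s$ both vanish for the relevant values $s\in\{1,1/2\}$, so that no powers of $H^{(k)}$ survive.

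\textbf{Lower bound.} Since $\mathcal H_h w\in V^{(k)}\subset H^1(\Omega^{(k)})$ has trace $w$ on $\partial\Omega^{(k)}$, I would first apply the $s=1$ part of Lemma~\ref{lem:geoequiv} to get $|\mathcal H_h w|_{H^1(\Omega^{(k)})}^2\eqsim|(\mathcal H_h w)\circ G_k|_{H^1(\widehat\Omega)}^2$, then invoke the classical continuous trace inequality on the fixed Lipschitz domain $\widehat\Omega$ (which bounds $|\widehat w|_{H^{1/2}(\partial\widehat\Omega)}$ by $|\widehat v|_{H^1(\widehat\Omega)}$ for every $\widehat v\in H^1(\widehat\Omega)$ with trace $\widehat w:=w\circ G_k$), and finally return to the physical domain with the $s=1/2$ part of Lemma~\ref{lem:geoequiv}, giving $|\widehat w|_{H^{1/2}(\partial\widehat\Omega)}^2\eqsim|w|_{H^{1/2}(\partial\Omega^{(k)})}^2$. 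Chaining these three relations yields $|w|_{H^{1/2}(\partial\Omega^{(k)})}^2\lesssim|\mathcal H_h w|_{H^1(\Omega^{(k)})}^2$.

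\textbf{Upper bound.} The key observation is that $\mathcal H_h w$ minimizes the energy $a^{(k)}(\cdot,\cdot)=|\cdot|_{H^1(\Omega^{(k)})}^2$ among all $v\in V^{(k)}$ with $v|_{\partial\Omega^{(k)}}=w$: for any such $v$ the difference $v-\mathcal H_h w$ lies in $V_0^{(k)}$, so Galerkin orthogonality gives $a^{(k)}(v,v)=a^{(k)}(\mathcal H_h w,\mathcal H_h w)+a^{(k)}(v-\mathcal H_h w,v-\mathcal H_h w)\ge|\mathcal H_h w|_{H^1(\Omega^{(k)})}^2$. Hence it suffices to produce one discrete extension with the claimed bound. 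I would take $\widehat w:=w\circ G_k$, which belongs to $\mathcal W$ and vanishes on $\widehat\Gamma_D^{(k)}$ (because $w$ is the trace of a function in $V^{(k)}=\widehat V^{(k)}\circ G_k^{-1}$ and elements of $\widehat V^{(k)}$ vanish on $\widehat\Gamma_D^{(k)}$), and set $v:=(\mathcal E\widehat w)\circ G_k^{-1}$ with $\mathcal E$ as in~\eqref{eq:e:def}. By~\eqref{eq:e:is:extension} the trace of $\mathcal E\widehat w$ on $\partial\widehat\Omega$ equals $\widehat w$; since $\widehat\Gamma_D^{(k)}$ consists of whole edges, this trace vanishes there, so $\mathcal E\widehat w\in\widehat V^{(k)}$, hence $v\in V^{(k)}$ with $v|_{\partial\Omega^{(k)}}=w$. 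Then the minimization property, the $s=1$ part of Lemma~\ref{lem:geoequiv}, Lemma~\ref{lem:e:estimate}, and the $s=1/2$ part of Lemma~\ref{lem:geoequiv} give, successively, $|\mathcal H_h w|_{H^1(\Omega^{(k)})}^2\le|v|_{H^1(\Omega^{(k)})}^2\eqsim|\mathcal E\widehat w|_{H^1(\widehat\Omega)}^2\lesssim p\,|\widehat w|_{H^{1/2}(\partial\widehat\Omega)}^2\eqsim p\,|w|_{H^{1/2}(\partial\Omega^{(k)})}^2$, which is the desired upper bound.

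\textbf{Expected obstacle.} I do not anticipate a genuine difficulty, since the substantive analysis is already packaged into Lemmas~\ref{lem:e:estimate} and~\ref{lem:geoequiv}. The only points needing care are the bookkeeping of the Dirichlet boundary condition — verifying $\mathcal E\widehat w\in\widehat V^{(k)}$, which relies on $\widehat\Gamma_D^{(k)}$ being a union of whole edges — and checking that the geometry-scaling exponents indeed vanish for $s\in\{1,1/2\}$, so that the final bound is truly independent of the patch size $H^{(k)}$.
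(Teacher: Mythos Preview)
Your proposal is correct and follows essentially the same route as the paper. The only cosmetic difference is in the lower bound: the paper stays on the physical domain and invokes the equivalence $|w|_{H^{1/2}(\partial\Omega^{(k)})}\eqsim|\mathcal H w|_{H^1(\Omega^{(k)})}$ for the \emph{continuous} harmonic extension $\mathcal H$ together with $|\mathcal H w|_{H^1}\le|\mathcal H_h w|_{H^1}$, whereas you pull back first and apply the trace inequality on $\widehat\Omega$; both are standard and yield the same bound once one checks (as you do) that the scaling exponents in Lemma~\ref{lem:geoequiv} vanish for $s\in\{1,1/2\}$.
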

\begin{proof}
	Let $\mathcal H:H^{1/2}(\partial\Omega^{(k)})
	\rightarrow H^1(\Omega^{(k)})$ be the harmonic extension. Since the $H^1$-seminorm of the
	harmonic extension is equivalent to the $H^{1/2}$-seminorm on the boundary and
	since the harmonic extension minimizes over $H^1(\Omega^{(k)})\supset V^{(k)}$,
	we have
	\[
	|w|_{H^{1/2}(\partial\Omega^{(k)})}^2
	\lesssim
	|\mathcal H w|_{H^1(\Omega^{(k)})}^2
	\le
	|\mathcal H_h w|_{H^1(\Omega^{(k)})}^2,
	\]
	which shows the first part of the estimate. 	Since the discrete harmonic extension
	minimizes over $V^{(k)}$, we obtain using Lemma~\ref{lem:geoequiv}
	\[
	|\mathcal H_h w|_{H^1(\Omega^{(k)})}^2
	= \inf_{v\in V^{(k)}\;:\;v|_{\partial \Omega^{(k)}}=w} |v|_{H^1(\Omega^{(k)})}^2
	\eqsim \inf_{v\in \widehat V^{(k)}\;:\;v|_{\partial\widehat\Omega}=w\circ G_k}
	|v|_{H^1(\widehat \Omega)}^2.
	\]
	Since the operator $\mathcal E$ preserves the Dirichlet boundary conditions, we have that
	$\mathcal E (w\circ G_k) \in \widehat V^{(k)}$ and thus
	we obtain using Lemmas~\ref{lem:e:estimate} and~\ref{lem:geoequiv}
	\[
	|\mathcal H_h w|_{H^1(\widehat\Omega)}^2
	\le
	|\mathcal E (w \circ G_k)|_{H^1(\widehat\Omega)}^2
	\lesssim
	p |w \circ G_k|_{H^{1/2}(\partial\widehat\Omega)}^2
	\eqsim
	p |w|_{H^{1/2}(\partial\Omega^{(k)})}^2
	,
	\]
	which shows the second part of the estimate. 
\end{proof}

\subsection{An embedding result}\label{sec:4:2}

The following Lemma shows that we are able to bound the function values of a spline
function from above using the $H^1$-norm. If we consider the physical domain, the
additional scaling factor is such that it can be eliminated using the Poincar{\'e}
inequality.

\begin{lemma}\label{lem:param:point:values}
	The estimates
	\[
	\sup_{x\in \overline{\widehat\Omega}} |\widehat u(x)|^2
	\lesssim \Lambda \|\widehat u\|_{H^1(\widehat\Omega)}^2
	\;\mbox{and}\;
	\sup_{x\in \overline{\Omega^{(k)}}} |u(x)|^2
	\lesssim \Lambda \big(|u|_{H^1(\Omega^{(k)})}^2
	+ (H^{(k)})^{-2} \|u\|_{L_2(\Omega^{(k)})}^2 \big),
	\]
	where $\Lambda:= 1+\log p+\log \frac{H^{(k)}}{h^{(k)}}$,
	hold for all $\widehat u \in \widehat V^{(k)}$ and $u \in V^{(k)}$	.
\end{lemma}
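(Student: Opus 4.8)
The plan is to prove the estimate on the parameter domain $\widehat\Omega$ and to transfer it to the physical domain via Lemma~\ref{lem:geoequiv}. For the transfer, write $u=\widehat u\circ G_k^{-1}$, so that $\sup_{x\in\overline{\Omega^{(k)}}}|u(x)|^2=\sup_{x\in\overline{\widehat\Omega}}|\widehat u(x)|^2$, and use Lemma~\ref{lem:geoequiv} with $s\in\{0,1\}$ to replace $|\widehat u|_{H^1(\widehat\Omega)}^2$ by $|u|_{H^1(\Omega^{(k)})}^2$ and $\|\widehat u\|_{L_2(\widehat\Omega)}^2$ by $(H^{(k)})^{-2}\|u\|_{L_2(\Omega^{(k)})}^2$; since $h^{(k)}=\widehat h^{(k)}H^{(k)}$, the value of $\Lambda$ is unchanged under this transfer, so the second estimate of the lemma follows from the first. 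It remains to show $\sup_{x\in\overline{\widehat\Omega}}|\widehat u(x)|^2\lesssim \Lambda\,\|\widehat u\|_{H^1(\widehat\Omega)}^2$ for every tensor-product spline $\widehat u\in\mathcal V$ of degree $p$ on the grid of size $\widehat h:=\widehat h^{(k)}$ (note $\widehat V^{(k)}\subset\mathcal V$), where here $\Lambda=1+\log p+\log\widehat h^{-1}$.

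The two ingredients I would prepare are a Sobolev embedding with explicit dependence on the smoothness order and a spline inverse estimate. For the first, I would show that $\sup_{x\in\overline{\widehat\Omega}}|v(x)|^2\lesssim (s-1)^{-1}\|v\|_{H^s(\widehat\Omega)}^2$ holds for all $s\in(1,2]$ and all $v\in H^s(\widehat\Omega)$, with the hidden constant independent of $s$: extend $v$ to $\mathbb R^2$ with an operator $E$ whose norm on $H^s$ is uniformly bounded for $s\in[1,2]$ (obtained by interpolating the cases $s=1$ and $s=2$), and combine the pointwise bound $\sup|Ev|\le\|\widehat{Ev}\|_{L_1(\mathbb R^2)}\le\|Ev\|_{H^s(\mathbb R^2)}\big(\int_{\mathbb R^2}(1+|\xi|^2)^{-s}\,\mathrm d\xi\big)^{1/2}$ with $\int_{\mathbb R^2}(1+|\xi|^2)^{-s}\,\mathrm d\xi=\pi/(s-1)$. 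For the second, I would use that the partial derivatives of a function in $\mathcal V$ are again piecewise polynomials of degree at most $p$, so that the classical one-dimensional polynomial inverse inequality applied on each knot span together with Assumption~\ref{ass:quasiuniform} gives $\|v\|_{H^2(\widehat\Omega)}\lesssim p^2\,\widehat h^{-1}\,\|v\|_{H^1(\widehat\Omega)}$ (note $\widehat h\lesssim 1$, hence $p^2\widehat h^{-1}\gtrsim 1$); interpolating between $H^1$ and $H^2$, using the identification of fractional Sobolev spaces with interpolation spaces as in the proofs above, then yields $\|v\|_{H^s(\widehat\Omega)}\lesssim (p^2\widehat h^{-1})^{s-1}\|v\|_{H^1(\widehat\Omega)}$ for all $s\in[1,2]$.

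Combining the two ingredients gives, for every $s\in(1,2]$, the bound $\sup_{\overline{\widehat\Omega}}|\widehat u|^2\lesssim (s-1)^{-1}(p^2\widehat h^{-1})^{2(s-1)}\|\widehat u\|_{H^1(\widehat\Omega)}^2$, and the final step is to optimize the exponent. Setting $A:=p^2\widehat h^{-1}$: if $A$ is bounded by an absolute constant, take $s=2$ and bound $(s-1)^{-1}A^{2(s-1)}=A^2\lesssim 1\le\Lambda$; otherwise take $s=1+\tfrac{1}{2\log A}\in(1,\tfrac32]$, for which $(s-1)^{-1}A^{2(s-1)}=2e\log A\lesssim\log A=2\log p+\log\widehat h^{-1}\lesssim\Lambda$. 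Either way the claim follows. I do not expect a deep obstacle; the two points needing care are making the blow-up of the Sobolev constant as $s\to 1^+$ explicit, and keeping the inverse-estimate constant at the sharp order $p^2$ (rather than a higher power), since otherwise the optimization over $s$ would leave a genuine power of $p$ instead of only $\log p$.
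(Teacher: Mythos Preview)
Your route is genuinely different from the paper's. The paper argues pointwise: it fixes $x$, writes $\widehat u(x)$ via the fundamental theorem along rays of a cone $C_r(x)$, splits the radial integral at $\epsilon\sim p^{-4}\widehat h^{3/2}$, bounds the short piece by a polynomial $W^{1,\infty}\!\to\! L_2$ inverse inequality on the element $Q^*$ containing $x$, and the long piece by Cauchy--Schwarz, which produces the $-\log\epsilon\sim \Lambda$ factor; averaging over the cone turns the pointwise remainder into $\|\widehat u\|_{L_2}^2$. Your Sobolev--embedding--plus--inverse--estimate--plus--optimization argument is more abstract and, when it applies, more portable.

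There is, however, a real gap. In the paper's setting the inner knot multiplicities may be as large as $p$, so $\widehat V^{(k)}$ contains splines that are merely $C^0$. For such $v$ one has $v\notin H^2(\widehat\Omega)$, and the inequality $\|v\|_{H^2(\widehat\Omega)}\lesssim p^2\widehat h^{-1}\|v\|_{H^1(\widehat\Omega)}$ is simply false; what the elementwise inverse inequality gives you is only a \emph{broken} $H^2$ bound. Interpolating between $H^1(\widehat\Omega)$ and a broken $H^2$ space does not produce $H^s(\widehat\Omega)$, so the step ``interpolating between $H^1$ and $H^2$ \ldots\ yields $\|v\|_{H^s}\lesssim (p^2\widehat h^{-1})^{s-1}\|v\|_{H^1}$'' does not go through as written.

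The gap is repairable because your optimization only uses $s-1=\tfrac{1}{2\log A}$, hence $\epsilon:=s-1$ stays well below $1/2$. For $\epsilon<1/2$ one can show, for piecewise polynomials on a quasi-uniform mesh, that $\|\nabla v\|_{H^\epsilon(\widehat\Omega)}^2\lesssim \sum_Q\|\nabla v\|_{H^\epsilon(Q)}^2$ with a constant that is uniform for $\epsilon$ bounded away from $1/2$ (this uses that extension by zero is bounded on $H^\epsilon$ for $\epsilon<1/2$); then the elementwise inverse estimate and interpolation on each $Q$ give the $H^{1+\epsilon}$ bound you need. But this extra ingredient must be stated and proved; without it the argument does not cover the $C^0$ case. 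The paper's cone argument sidesteps the issue entirely by never leaving $H^1$ and $L_2$.
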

\begin{proof}
	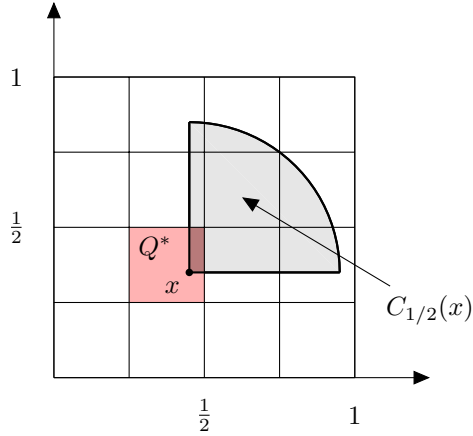
\begin{figure}[h!]
		\begin{minipage}{1\linewidth}
			\centering
			\begin{tikzpicture}
			\draw [black,thick,domain=0:90,fill=gray!20] plot ({1.8+2*cos(\x)}, {1.4+2*sin(\x)});
			\draw [black, thick,fill=gray!20] (1.8,3.4) -- (1.8,1.4) -- (3.8,1.4) {};
			\fill[red!30] (1,1) rectangle (2,2);
			\fill[red!30!gray!80] (1.8,1.4) rectangle (2,2);
			\draw[-triangle 45] (0,0) -- (5,0) node at (4,-0.5) {$1$} node at (2, -0.5) {$\frac{1}{2}$};
			\draw[-triangle 45] (0,0) -- (0,5) node at (-0.5, 4) {$1$} node at (-0.5, 2) {$\frac{1}{2}$};
			
			\draw (4,0) -- (4,4) -- (0,4) node at (4.2,4.2) {};%
			\draw[step = 1, black, thin] (0,0) grid (4,4);
			
			\draw (1.8,1.4) node[circle, fill, inner sep = 1pt] (M) {};
			\node[anchor = north east] at (M) {$x$};
			\node[anchor = north west] at (1, 2) {$Q^*$};
			
			\draw [black,thick,domain=0:90] plot ({1.8+2*cos(\x)}, {1.4+2*sin(\x)});
			\draw [black, thick] (1.8,3.4) -- (1.8,1.4) -- (3.8,1.4) {};
			
			\node at (5, 0.9) (C) {$C_{1/2}(x)$};
			\draw[triangle 45-] (2.5, 2.4) -- (C);
			\end{tikzpicture}
			\captionof{figure}{The chosen point $x$ with its cone of radius $r=1/2$}
			\label{fig:4.4caseA}
		\end{minipage}%
	\end{figure}

	Let $\widehat u\in \widehat{V}^{(k)}$ and  $x\in\overline{\widehat \Omega}$ be arbitrary but fixed.
	We assume without loss of generality that
	\begin{equation}\label{eq:x:wlog}
	x \in [0,1/2]^2.
	\end{equation}
	
	The domain $[0,1)^2$ is composed of elements $Q^{(i)} := [\underline{q}_1^{(i)},\overline{q}_1^{(i)})
	\times [\underline{q}_2^{(i)},\overline{q}_2^{(i)})$ on which the
	function $u$ is polynomial. Let $Q^*:= [\underline{q}_1^*,\overline{q}_1^*)
	\times [\underline{q}_2^*,\overline{q}_2^*)$ be the element containing $x$. 
	Using Assumption~\ref{ass:quasiuniform}, we obtain
	\begin{equation}\label{eq:qgrid}
	\widehat{h}^{(k)}\eqsim \widehat h := \max_{\delta=1,2} \overline{q}_\delta^* - \underline{q}_\delta^*.
	\end{equation}
	For any $r>0$, let $C_r(x)$ be the cone with vertex $x=(x_1,x_2)$, defined by
	\[
	C_r(x) :=
	\{
	\xi=(\xi_1,\xi_2) \in \mathbb{R}^2 \,:\,
	\|\xi-x\|_{\ell^2} \le r, \;
	\xi_1 \ge x_1, \;
	\xi_2 \ge x_2
	\}.
	\]
	The assumption~\eqref{eq:x:wlog} yields $C_{1/2}(x) \subset [0,1]^2$.
	Consider the case that $C_\epsilon(x) \subset \overline{Q^*}$, where
	$\epsilon := 4^{-1} p^{-4} \widehat{h}^{3/2}$, first.
	Using the notation
	\[
	v(r,\theta) :=
	\widehat u( x_1 + r \sin \theta, x_2 + r \cos \theta)
	\]
	and the fundamental theorem of calculus, we obtain 
	\[
	\widehat u(x) = v(0,\theta) = v(r,\theta)
	- \int_{0}^{r} \frac{\partial v}{\partial \rho}(\rho, \theta) \; \textit{d}\rho
	\]
	for all $r \in (0,1/2]$ and all $\theta \in [0,\pi/2]$.
	By splitting up the integral and using Young's inequality, we obtain
	\begin{equation}
	\label{eq:first:splitting}
	\widehat u(x)^2
	\lesssim v(r,\theta)^2
	+\left(\int_{0}^{\epsilon} \frac{\partial v}{\partial \rho}(\rho, \theta) \; \textit{d} \rho\right)^2
	+\left(
	\int_{\epsilon}^{r} \frac{\partial v}{\partial \rho}(\rho, \theta) \; \textit{d} \rho \right)^2
	\end{equation}
	for all $r \in (1/4,1/2)\subset(\epsilon,1/2)$.
	Using Theorem~4.76, eq. (4.6.2) and (4.6.1) in Ref.~\refcite{Schwab}, we estimate the first integral as follows
	\begin{equation}
	\label{eq:first:integral}
	\int_{0}^{\epsilon} \frac{\partial v}{\partial \rho}(\rho, \theta) \; \textit{d} \rho
	\leq
	\epsilon \Big\|\frac{\partial v}{\partial \rho}\Big\|_{L_\infty((0,\epsilon)\times(0,\pi/2))}
	\leq
	\epsilon |\widehat u|_{W^1_\infty(Q^*)}
	\lesssim p^4 \widehat{h}^{-3/2} \epsilon \|\widehat u\|_{L_2(Q^*)}.
	\end{equation}
	The second inequality is estimated using the
	Cauchy-Schwarz inequality as follows
	\begin{align*}
		\int_{\epsilon}^{r} \frac{\partial v}{\partial \rho}(\rho, \theta) \; \textit{d} \rho 
		& =
		\int_{\epsilon}^{r}\rho^{-1/2} \left( \frac{\partial v}{\partial \rho}(\rho, \theta) \rho^{1/2}\right) 	\; \textit{d} \rho \\
		& \leq
		(\log r - \log \epsilon)^{1/2}
		\left(
		\int_{\epsilon}^{r} \frac{\partial v}{\partial \rho}(\rho, \theta)^2 \rho \; \textit{d} \rho 
		\right)^{1/2}.
	\end{align*}
	By transforming the integral back to Cartesian coordinates, we obtain further
	\begin{align}
		\label{eq:second:integral}
		\int_{\epsilon}^{r} \frac{\partial v}{\partial \rho}(\rho, \theta) \; \textit{d} \rho 
		\leq
		(\log r - \log \epsilon)^{1/2}
		|\widehat u|_{H^1(C_r(x)\backslash C_\epsilon(x))}
		\lesssim
		(- \log \epsilon)^{1/2}
		|\widehat u|_{H^1(C_{1/2}(x)\backslash C_\epsilon(x))}
	\end{align}
	for all $r\in(1/4,1/2)$.
	By plugging the estimates from 
	\eqref{eq:first:integral} and \eqref{eq:second:integral}
	into
	\eqref{eq:first:splitting}, we obtain
	\begin{align*}
		\widehat u(x)^2
		&\lesssim v(r,\theta)^2 + p^4 \widehat{h}^{-3/2} \epsilon   \|\widehat u\|_{L_2(Q^*)}^2
		- \log \epsilon |\widehat u|_{H^1(C_{1/2}(x)\backslash C_\epsilon(x))}^2\\
		&\lesssim v(r,\theta)^2 + (1+\log p- \log \widehat{h})\|\widehat u\|_{H^1(\widehat \Omega)}^2
	\end{align*}
	for all $r\in(1/4,1/2)$ and all $\theta \in [0,\pi/2]$.
	By taking the integral over the domain $C_{1/2}(x)\backslash C_{1/4}(x)$, we obtain
	\begin{align*}
		&\widehat  u(x)^2
		\eqsim
		\int_{C_{1/2}(x)\backslash C_{1/4}(x)} 
		\widehat u(x)^2
		\mbox{d}\xi
		=
		\int_0^{\pi/2} \int_{1/4}^{1/2}
		r\,\widehat u(x)^2\mbox{d}r\, \mbox{d}\theta\\
		&\qquad\lesssim 
		\int_0^{\pi/2} \int_{1/4}^{1/2} r v(r,\theta)^2 \mbox{d}r\, \mbox{d}\theta
		+  (1+\log p- \log \widehat{h})\|\widehat u\|_{H^1(\widehat \Omega)}^2 \int_0^{\pi/2} \int_{1/4}^{1/2} r\, \mbox{d}r\, \mbox{d}\theta  \\
		&\qquad  \lesssim  (1+\log p- \log \widehat{h})\|\widehat u\|_{H^1(\widehat \Omega)}^2,
	\end{align*}
	which shows the desired result.
	
	Now, we consider the case that $C_\epsilon \not\subset \overline{Q^*}$.
	In this case, we define
	\[
	\widetilde{x} := (
	\min\{ x_1, \overline{q}_1^*- \epsilon \},
	\min\{ x_2, \overline{q}_2^*- \epsilon \}
	)^\top
	\]
	and observe that~\eqref{eq:qgrid} yields that $\widetilde{x}\in Q^*$. By construction,
	$C_{\epsilon}(\widetilde{x})\subset \overline{Q^*}$.
	Thus, using the arguments above, we obtain
	\begin{equation}\label{eq:tilde:x}
	\widehat u(\widetilde{x})^2 \lesssim (1 + \log p- \log \widehat{h})\|\widehat u\|_{H^1(\widehat \Omega)}^2.
	\end{equation}
	Moreover, we observe that $\|x-\widetilde{x}\|_{\ell^2} \lesssim p^{-4} \widehat{h}^{3/2}$.
	Thus, we conclude using the fundamental theorem of calculus
	\[
	(\widehat u(x)-\widehat u(\widetilde{x}))^2 \le \|x-\widetilde{x}\|_{\ell^2} \|\widehat u\|_{W^1_\infty(Q^*)}^2
	\lesssim  p^{-4} \widehat{h}^{3/2} \|\widehat u\|_{W^1_\infty(Q^*)}^2.
	\]
	Using Theorem~4.76, eq. (4.6.2) and (4.6.1) in Ref.~\refcite{Schwab}, we further obtain 
	\[
	(\widehat u(x)-\widehat u(\widetilde{x}))^2 \lesssim  \|\widehat u\|_{L_2(Q^*)}^2.
	\]
	By combining this result with~\eqref{eq:tilde:x}, the triangle inequality
	and Young's inequality, we immediately obtain the first bound.
	The second statement follows directly using Lemma~\ref{lem:geoequiv}.
\end{proof}

\subsection{The tearing lemma}\label{sec:4:3}

The variation of a function $v$ over a domain $T$ is defined via
\[
|v|_{L_\infty^0(T)} := \inf_{c\in \mathbb R} \|v-c\|_{L_\infty(T)}
= \frac{1}{2}\big( \mbox{ess.sup}_{x\in T} v(x) - \mbox{ess.inf}_{x\in T} v(x)\big),
\]
where ess.sup and ess.inf are the essential supremum and infimum, respectively.
Thus, we have obviously
\begin{equation}\label{eq:variation}
|v|_{L_\infty^0(T_1\cup T_2)} 
\le |v|_{L_\infty^0(T_1)} 
+ |v|_{L_\infty^0(T_2)} 
\;
\mbox{and}
\;
\|v(x)-v\|_{L_\infty(T)} \le 2 |v|_{L_\infty(T)}
\mbox{ for all } x\in T.
\end{equation}

\begin{lemma}\label{lem:tearing}
	The estimates
	\begin{align*}
		& \sum_{\ell=1}^4
		|\widehat u|^2_{H^{1/2}(\widehat{\Gamma}^{(\ell)})}
		\le
		|\widehat u|^2_{H^{1/2}(\partial \widehat\Omega)}
		\lesssim 
		\sum_{\ell=1}^4
		\left(
		|\widehat u|^2_{H^{1/2}(\widehat{\Gamma}^{(\ell)})}
		+  \Lambda	|\widehat u|^2_{L_\infty^0(\widehat{\Gamma}^{(\ell)})}
		\right),  \\
		& \sum_{\ell=1}^4
		|u|^2_{H^{1/2}(\Gamma^{(k,\ell)})}
		\le
		|u|^2_{H^{1/2}(\partial \Omega^{(k)})}
		\lesssim 
		\sum_{\ell \in \mathcal N(k)}
		\left(
		|u|^2_{H^{1/2}(\Gamma^{(k,\ell)})}
		+  \Lambda 
		|u|^2_{L_\infty^0(\Gamma^{(k,\ell)})}
		\right)  
	\end{align*}
	hold for all $\widehat u \in \widehat V^{(k)}$ and $u \in V^{(k)}$,
	where $\Lambda:= 1+\log p+\log \frac{H^{(k)}}{h^{(k)}}$.
\end{lemma}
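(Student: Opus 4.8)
The plan is to establish both inequality chains first on the parameter domain, that is, for $\widehat u\in\widehat V^{(k)}$, and then to transfer them to the physical domain by Lemma~\ref{lem:geoequiv}: it gives $|u|_{H^{1/2}(\partial\Omega^{(k)})}\eqsim|u\circ G_k|_{H^{1/2}(\partial\widehat\Omega)}$ and $|u|_{H^{1/2}(\Gamma^{(k,\ell)})}\eqsim|u\circ G_k|_{H^{1/2}(\widehat\Gamma^{(k,\ell)})}$, while $|u|_{L_\infty^0(\Gamma^{(k,\ell)})}=|u\circ G_k|_{L_\infty^0(\widehat\Gamma^{(k,\ell)})}$ and $\widehat h^{(k)}=h^{(k)}/H^{(k)}$, so that $\Lambda$ is unchanged and the four edges of $\partial\widehat\Omega$ correspond to the edges of $\partial\Omega^{(k)}$ (those not in $\mathcal N(k)$ lying on $\partial\Omega$, where $u=0$, hence contributing nothing). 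The lower bound is then immediate: writing the seminorm as a double integral over $\partial\widehat\Omega\times\partial\widehat\Omega$, its restriction to $\widehat\Gamma^{(\ell)}\times\widehat\Gamma^{(\ell)}$ equals $|\widehat u|_{H^{1/2}(\widehat\Gamma^{(\ell)})}^2$, and the four edges being pairwise disjoint these contributions sum to at most the whole integral.

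For the upper bound I would split that double integral into the four self-terms (summing to $\sum_{\ell}|\widehat u|_{H^{1/2}(\widehat\Gamma^{(\ell)})}^2$) and the remaining finitely many cross-terms $I_{\ell,m}:=\int_{\widehat\Gamma^{(\ell)}}\int_{\widehat\Gamma^{(m)}}|x-y|^{-2}\,|\widehat u(x)-\widehat u(y)|^2\,\mathrm{d}x\,\mathrm{d}y$ with $\ell\neq m$, and bound each of them by $\Lambda\big(|\widehat u|_{L_\infty^0(\widehat\Gamma^{(\ell)})}^2+|\widehat u|_{L_\infty^0(\widehat\Gamma^{(m)})}^2\big)$. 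If $\widehat\Gamma^{(\ell)}$ and $\widehat\Gamma^{(m)}$ are opposite, $|x-y|$ is bounded below by a fixed constant, so $I_{\ell,m}\lesssim\int_{\widehat\Gamma^{(\ell)}}\int_{\widehat\Gamma^{(m)}}|\widehat u(x)-\widehat u(y)|^2\,\mathrm{d}x\,\mathrm{d}y$; inserting the value $\widehat u(\mathbf v)$ at a fixed vertex $\mathbf v$, using $\|\widehat u-\widehat u(\mathbf v)\|_{L_\infty(\widehat\Gamma^{(j)})}\le 2|\widehat u|_{L_\infty^0(\widehat\Gamma^{(j)})}$ whenever $\mathbf v\in\widehat\Gamma^{(j)}$ (from \eqref{eq:variation}), and chaining along the at most two intermediate edges to compare $\widehat u(\mathbf v)$ with a best approximating constant on $\widehat\Gamma^{(m)}$, this term is seen to be $\lesssim\sum_{j=1}^4|\widehat u|_{L_\infty^0(\widehat\Gamma^{(j)})}^2$. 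If instead $\widehat\Gamma^{(\ell)}$ and $\widehat\Gamma^{(m)}$ share a vertex $\mathbf v$, then the point at arclength $t$ on $\widehat\Gamma^{(\ell)}$ and the point at arclength $s$ on $\widehat\Gamma^{(m)}$ (measured from $\mathbf v$) have Euclidean distance $\sqrt{s^2+t^2}$; writing $g(t):=\widehat u|_{\widehat\Gamma^{(\ell)}}(t)-\widehat u(\mathbf v)$ and $h(s):=\widehat u|_{\widehat\Gamma^{(m)}}(s)-\widehat u(\mathbf v)$, both vanishing at $0$, Young's inequality and $\int_0^1(s^2+t^2)^{-1}\mathrm{d}s=t^{-1}\arctan(t^{-1})\le\tfrac{\pi}{2}\,t^{-1}$ give $I_{\ell,m}\lesssim\int_0^1 t^{-1}g(t)^2\,\mathrm{d}t+\int_0^1 s^{-1}h(s)^2\,\mathrm{d}s$.

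Everything then reduces to the one-dimensional estimate that, for a spline $g$ of degree $p$ on $(0,1)$ over a quasi-uniform knot vector of grid size $\widehat h^{(k)}$ with $g(0)=0$,
\[
\int_0^1 t^{-1}\,g(t)^2\,\mathrm{d}t\;\lesssim\;\Lambda\,\|g\|_{L_\infty(0,1)}^2 .
\]
I would prove this by splitting the integral at the length $e_1\eqsim\widehat h^{(k)}$ of the first element (Assumption~\ref{ass:quasiuniform}): on $(e_1,1)$ one has $\int_{e_1}^1 t^{-1}g^2\le\|g\|_{L_\infty}^2\log(e_1^{-1})\lesssim\Lambda\,\|g\|_{L_\infty}^2$, and on $(0,e_1)$, where $g$ is a single polynomial of degree $\le p$, one rescales to $(0,1)$ and splits once more at $p^{-2}$: on $(p^{-2},1)$ the contribution is $\lesssim(\log p)\,\|g\|_{L_\infty}^2$, while on $(0,p^{-2})$ one combines $g(0)=0$ with Markov's inequality $\|g'\|_{L_\infty(0,1)}\lesssim p^2\|g\|_{L_\infty(0,1)}$ to get $|g(\tau)|\lesssim p^2\tau\,\|g\|_{L_\infty}$, whence $\int_0^{p^{-2}}\tau^{-1}g^2\lesssim p^4\|g\|_{L_\infty}^2\int_0^{p^{-2}}\tau\,\mathrm{d}\tau\lesssim\|g\|_{L_\infty}^2$. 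Applying this to the $g$ and $h$ above, together with $\|g\|_{L_\infty(\widehat\Gamma^{(\ell)})}\le 2|\widehat u|_{L_\infty^0(\widehat\Gamma^{(\ell)})}$ (again \eqref{eq:variation}, comparing $\widehat u(\mathbf v)$ to the best constant on $\widehat\Gamma^{(\ell)}$) bounds every cross-term; summing the finitely many cross-terms and the self-terms gives the parameter-domain upper bound, and Lemma~\ref{lem:geoequiv} transfers it to $\partial\Omega^{(k)}$.

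I expect the main obstacle to be this one-dimensional estimate, and within it the contribution near the shared vertex: one must place the inner splitting scale $\epsilon\eqsim p^{-2}$ so that the $p^4$ produced by Markov's inequality is exactly absorbed by $\int_0^\epsilon\tau\,\mathrm{d}\tau\eqsim\epsilon^2$, and check that the two logarithmic losses combine into the single factor $\Lambda=1+\log p+\log(H^{(k)}/h^{(k)})$. The other ingredients --- the reduction of the cross-terms, the chaining argument for opposite edges, and the geometric transfer --- are routine.
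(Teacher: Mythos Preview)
Your proof is correct and follows essentially the same strategy as the paper: decompose $|\widehat u|_{H^{1/2}(\partial\widehat\Omega)}^2$ into self-terms and cross-terms, reduce each cross-term to a one-dimensional Hardy-type integral $\int_0^1 t^{-1}|g(t)|^2\,\mathrm{d}t$ with $g(0)=0$, and bound this by $\Lambda\,|\widehat u|_{L_\infty^0}^2$ using a polynomial inverse inequality near the vertex; the transfer to $\Omega^{(k)}$ via Lemma~\ref{lem:geoequiv} is identical.

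The only notable variations are technical. For the 1D estimate the paper makes a single split at $\epsilon=p^{-4}(\widehat h^{(k)})^2$ and on $(0,\epsilon)$ uses Cauchy--Schwarz together with the $H^1$--$L_2$ inverse inequality $|v|_{H^1}^2\lesssim p^4(\widehat h^{(k)})^{-2}\|v-c\|_{L_2}^2$, whereas you split first at the element length $e_1\eqsim\widehat h^{(k)}$, rescale, and then split at $p^{-2}$ using the $L_\infty$ Markov inequality; both routes produce the same factor $\Lambda$. For the cross-terms you distinguish adjacent from opposite edges and handle the latter by a chaining argument, while the paper parameterizes all four edges so that the bound $|s+t|^2\lesssim|\gamma^{(m)}(s)-\gamma^{(n)}(t)|^2$ is claimed uniformly and then inserts the common value $v(0)=w(0)$; your separate treatment is in fact a little more transparent here, since the paper's chosen parameterizations place the shared vertex at $s=t=1$ (rather than $0$) for two of the four adjacent pairs, which requires an unmentioned reparameterization.
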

\begin{proof}
	We begin with the first statement.
	Obviously,
	\begin{equation}\label{eq:tearing:proof:1}
	| \widehat u |^2_{H^{1/2}(\partial \widehat{\Omega})}
	=
	\sum_{\ell=1}^{4} |\widehat u|^2_{H^{1/2}(\widehat{\Gamma}^{(\ell)})}
	+
	\underbrace{\sum_{\ell=1}^{4}
		\sum_{n \in \{1,\ldots,4\}\backslash \{\ell\}}^{}
		\int_{\widehat{\Gamma}^{(\ell)}}^{}\int_{\widehat{\Gamma}^{(n)}}^{} 
		\frac{|\widehat u(x) - \widehat u(y)|^2}{|x-y|^2} 
		\mathrm{d}x \mathrm{d}y}_{\displaystyle \ge 0},
	\end{equation}
	which immediately shows the first side of the desired inequality. For the second
	side, we have to estimate the double integral.
	Let us consider a term with $\ell \neq n$. The edges $\widehat{\Gamma}^{(\ell)}$
	are parameterized by the functions
	\[
	\gamma^{(1)}(t) := (0,t) ,\quad
	\gamma^{(2)}(t) := (1-t,1) ,\quad
	\gamma^{(3)}(t) := (1,1-t) ,\quad
	\gamma^{(4)}(t) := (t,0).
	\]
	(Note that this parameterization is different than that of~\eqref{eq:unroll}.)
	We define the functions $v := u\circ \gamma^{(\ell)}$ and $w := u\circ\gamma^{(n)}$.
	Simple calculations show that
	\[
	|s + t|^2 \lesssim |\gamma^{(m)}(s) - \gamma^{(n)}(t)|^2 
	\] 
	holds for all $m$ and $n$ with $m\ne n$.
	Thus, we obtain
	\[
	\int_{\widehat{\Gamma}^{(m)}}^{}\int_{\widehat{\Gamma}^{(n)}}^{} 
	\frac{|\widehat u(x) -\widehat u(y)|^2}{|x-y|^2} 
	\,\mathrm{d}x \,\mathrm{d}y
	\lesssim
	\int_{0}^{1}\int_{0}^{1} 
	\frac{|v(s) - w(t)|^2}{|s+t|^2} 
	\,\mathrm{d}s \,\mathrm{d}t 
	\]
	By adding a productive zero and using the triangle inequality, we obtain 
	\begin{equation}\label{eq:split0}
	\begin{aligned}
	&\int_{\widehat{\Gamma}^{(m)}}^{}\int_{\widehat{\Gamma}^{(n)}}^{} 
	\frac{|\widehat u(x) - \widehat u(y)|^2}{|x-y|^2} 
	\,\mathrm{d}x \,\mathrm{d}y
	\\
	&\qquad\qquad \lesssim
	\int_{0}^{1}\int_{0}^{1} 
	\frac{|v(s) - v(0)|^2}{|s+t|^2} 
	\,\mathrm{d}s \,\mathrm{d}t 
	+
	\int_{0}^{1}\int_{0}^{1} 
	\frac{|w(t) - w(0)|^2}{|s+t|^2} 
	\,\mathrm{d}s \,\mathrm{d}t
	\\
	&\qquad\qquad\eqsim
	\int_{0}^{1}
	\frac{|v(s) - v(0)|^2}{s} 
	\,\mathrm{d}s  
	+
	\int_{0}^{1}
	\frac{|w(t) - w(0)|^2}{t} 
	\,\mathrm{d}t
	\end{aligned}
	\end{equation}
	Consider the first of these summands.
	Let $\epsilon := p^{-4}(\widehat{h}^{(k)})^2$.
	By splitting up the integral, we obtain
	\begin{equation}\label{eq:split}
	\int_{0}^{1}
	\frac{|v(s) - v(0)|^2}{s} 
	\,\mathrm{d}s
	=
	\int_{0}^{\epsilon} 
	\frac{|v(s) - v(0)|^2}{s} 
	\,\mathrm{d}s 
	+
	\int_{\epsilon}^{1} 
	\frac{|v(s) - v(0)|^2}{s} 
	\,\mathrm{d}s .
	\end{equation}
	Using the fundamental theorem of calculus and the Cauchy-Schwarz
	inequality, we obtain 
	\[
	\int_{0}^{\epsilon} 
	\frac{|v(s) - v(0)|^2}{s} 
	\,\mathrm{d}s
	= 
	\int_{0}^{\epsilon}
	\frac{\left(
		\int_{0}^{s} v'(z) \; \text{d}z 
		\right)^2 }{s} \; \text{d}s 
	\le 
	\int_{0}^{\epsilon} \int_{0}^{s} v' (z)^2 \; \text{d}z \; \text{d}s
	\le \epsilon |v|_{H^1(0,1)}^2.
	\]
	Using a standard inverse estimate, cf. Theorem~4.76, eq. (4.6.5) in Ref.~\refcite{Schwab},
	we obtain further
	\begin{equation}\label{eq:split1}
	\int_{0}^{\epsilon} 
	\frac{|v(s) - v(0)|^2}{s} 
	\,\mathrm{d}s
	\lesssim 
	\epsilon p^4 (\widehat{h}^{(k)})^{-2} \inf_{c\in \mathbb R} \|v-c\|_{L_2(0,1)}^2
	\le 
	\epsilon p^4 (\widehat{h}^{(k)})^{-2}  |v|_{L_\infty^0(0,1)}^2.
	\end{equation}
	For the second integral in~\eqref{eq:split}, we obtain using~\eqref{eq:variation} that
	\begin{equation}\label{eq:split2}
	\int_{\epsilon}^{1}
	\frac{|v(s) - v(0)|^2}{s} 
	\mathrm{d}s
	\lesssim 
	\int_{\epsilon}^{1}
	\frac{1}{s} \; \text{d}s \; \| v(\cdot) - v(0) \|^2_{L_\infty(0,1)} 
	= -\log \epsilon \; |v|^2_{L_\infty^0(0,1)}.
	\end{equation}
	The combination of~\eqref{eq:split}, \eqref{eq:split1}, \eqref{eq:split2}, and
	and the definition of $\epsilon$	yields
	\[
	\int_{0}^{1}
	\frac{|v(s) - v(0)|^2}{s} 
	\,\mathrm{d}s
	\lesssim 	\left(1 + \log   p - \log   \widehat{h}^{(k)}\right) \; 
	|v|^2_{L_\infty^0(0,1)}.
	\]
	Since we can estimate the second integral in~\eqref{eq:split0} analogously, we have
	\begin{align*}
		&\int_{\widehat{\Gamma}^{(m)}}^{}\int_{\widehat{\Gamma}^{(n)}}^{} 
		\frac{|\widehat u(x) - \widehat u(y)|^2}{|x-y|^2} 
		\, \mathrm{d}x \, \mathrm{d}y  \lesssim
		\left(1 + \log   p - \log  \widehat{h}^{(k)}\right) \; 
		\left( 
		|\widehat u|^2_{L_\infty^0(\widehat\Gamma^{(m)})}
		+
		|\widehat u|^2_{L_\infty^0(\widehat\Gamma^{(n)})}
		\right).
	\end{align*}
	The combination of this estimate and~\eqref{eq:tearing:proof:1} shows the first statement.
	The second statement is obtained using Lemma~\ref{lem:geoequiv}.
\end{proof}

\subsection {Condition number estimate}\label{sec:4:fin}

In the following, we prove Theorem~\ref{thrm:fin}.
The idea of the proof is to use Theorem~22 in Ref.~\refcite{MandelDohrmannTezaur:2005a},
which states that 
\begin{equation}\label{eq:MandelDohrmannTezaur}
\kappa(M_{\mathrm{sD}} \, F) \le \sup_{w \in  \widetilde W }
\frac{ \| B_D^\top B \underline w \|_S^2 }{ \| \underline w \|_S^2 },
\end{equation}
where $\underline w$ is the coefficient vector associated to $w$.

\begin{lemma}\label{lem:bbt}
	Let $u=(u^{(1)},\cdots,u^{(K)})\in \widetilde W$ with coefficient vector $\underline u$
	and let $w=(w^{(1)},\cdots,w^{(K)})$ with coefficient vector $\underline w$
	be such that
	\[
	\underline w = B_D^\top B \underline u.
	\]
	Then, we have for each patch $\Omega^{(k)}$ and each edge $\Gamma^{(k,\ell)}$
	connecting the vertices ${\normalfont \textbf x}^{(k,\ell,1)}$ and ${\normalfont \textbf x}^{(k,\ell,2)}$
	\[
	\begin{aligned}
	&|w^{(k)}|_{H^{1/2}(\Gamma^{(k,\ell)})}^2
	\lesssim
	|u^{(k)}|_{H^{1/2}(\Gamma^{(k,\ell)})}^2
	+ |u^{(\ell)}|_{H^{1/2}(\Gamma^{(k,\ell)})}^2
	+ \Delta^{(k,\ell,1)} + \Delta^{(k,\ell,2)}, \\
	&|w^{(k)}|_{L_\infty^0(\Gamma^{(k,\ell)})}^2
	\lesssim
	|u^{(k)}|_{L_\infty^0(\Gamma^{(k,\ell)})}^2
	+ |u^{(\ell)}|_{L_\infty^0(\Gamma^{(k,\ell)})}^2
	+ \Delta^{(k,\ell,1)} + \Delta^{(k,\ell,2)},
	\end{aligned}
	\]
	where $\Delta^{(k,\ell,i)}=0$ for Algorithms~A and C and
	\[
	\Delta^{(k,\ell,i)}:=
	\sum_{j \in \mathcal{P}(\normalfont \textbf x^{(k,\ell,i)})}
	|u^{(k)}({\normalfont \textbf x}^{(k,\ell,i)})-u^{(j)}({\normalfont \textbf x}^{(k,\ell,i)})|^2 
	\]
	for Algorithm~B.
\end{lemma}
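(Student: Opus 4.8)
The plan is to unravel exactly what the operator $B_D^\top B$ does to a coefficient vector, edge by edge and vertex by vertex, and then translate the resulting pointwise/edgewise identities into the claimed seminorm estimates. First I would recall the structure of $B_D^\top B$: the matrix $B$ collects the jump constraints \eqref{eq:def:b}, and $B_D$ is the scaled version obtained from $D^{-1}$; applied to a fully (or redundantly) assembled vector $\underline u$ associated to $u\in\widetilde W$, the product $B_D^\top B\underline u$ produces, in each patch-local block, a \emph{weighted average of jumps}. Concretely, for a basis function $\phi_i^{(k)}$ living on an edge $\Gamma^{(k,\ell)}$ that is \emph{not} a vertex function, the coefficient of $w^{(k)}$ at that index is (a multiplicity-scaled multiple of) $u^{(k)}_{i^*}-u^{(\ell)}_{j^*}$, i.e.\ the difference of the two patch values meeting at that edge; since only two patches meet along an edge, the scaling weight is bounded and $\sum_{j}b_{i,j}^2=1$ so $d_{i,i}=1$ there. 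For a vertex function at a vertex $\textbf x$ shared by the patches in $\mathcal P(\textbf x)$, the corresponding coefficient of $w^{(k)}$ is a convex-type combination $\sum_{j\in\mathcal P(\textbf x)}\delta_j\,(u^{(k)}(\textbf x)-u^{(j)}(\textbf x))$ with $\sum_j\delta_j$ bounded by $1$ in modulus; for Algorithms~A and~C these constraints are omitted (Figure~\ref{fig:ommiting}), so the vertex coefficients of $w^{(k)}$ vanish identically, which is precisely why $\Delta^{(k,\ell,i)}=0$ in those cases.

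Next I would fix an edge $\Gamma^{(k,\ell)}$ with endpoints $\textbf x^{(k,\ell,1)},\textbf x^{(k,\ell,2)}$ and write, on that edge, $w^{(k)} = (u^{(k)}-u^{(\ell)}) + r$, where the first term collects the edge-interior contributions (those come directly from the jump $u^{(k)}-u^{(\ell)}$, up to the bounded scaling) and $r$ is supported near the two endpoints and carries the vertex corrections. For Algorithms~A and~C, $r\equiv 0$ on the edge and one gets immediately, by the triangle inequality in $H^{1/2}(\Gamma^{(k,\ell)})$ and in $L_\infty^0(\Gamma^{(k,\ell)})$,
\[
|w^{(k)}|_{H^{1/2}(\Gamma^{(k,\ell)})}^2 \lesssim |u^{(k)}|_{H^{1/2}(\Gamma^{(k,\ell)})}^2 + |u^{(\ell)}|_{H^{1/2}(\Gamma^{(k,\ell)})}^2,
\]
and likewise for the variation seminorm, which is the claim with $\Delta^{(k,\ell,i)}=0$. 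For Algorithm~B the remaining work is to bound the correction term $r$: since the jump constraints are imposed redundantly at every patch sharing the vertex, $r$ near $\textbf x^{(k,\ell,i)}$ is (up to bounded weights) a sum of the nodal functions scaled by $u^{(k)}(\textbf x^{(k,\ell,i)})-u^{(j)}(\textbf x^{(k,\ell,i)})$ over $j\in\mathcal P(\textbf x^{(k,\ell,i)})$. I would bound the $H^{1/2}$- and $L_\infty^0$-seminorms of a single such vertex nodal function on $\Gamma^{(k,\ell)}$ by an absolute constant (this is a local, scale-invariant computation on the reference edge — the $H^{1/2}$-seminorm and the variation of a fixed B-spline nodal shape function are $O(1)$), multiply by the squared vertex jumps, and sum over the (boundedly many, by Assumption~\ref{ass:neighbors}) patches $j$; that produces exactly $\Delta^{(k,\ell,1)}+\Delta^{(k,\ell,2)}$.

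The main obstacle I anticipate is the bookkeeping in the first step: making precise that $B_D^\top B\underline u$, restricted to the edge-interior indices of patch $k$, really equals the jump $u^{(k)}-u^{(\ell)}$ with weights bounded independently of everything, and that the only extra contributions live in a fixed-size neighborhood of the vertices. This requires care because a single edge basis function near a corner can receive contributions from constraints associated to \emph{both} the edge and (in Algorithm~B) the vertex, and because in the redundant-vertex case the diagonal $D$ entry at a vertex equals $|\mathcal P(\textbf x)|$-ish, so the scaled weights $\delta_j$ need the bound $|\delta_j|\le 1$ and $\sum_j|\delta_j|\lesssim 1$ — both of which follow from the multiplicity-scaling definition of $D$ together with Assumption~\ref{ass:neighbors}. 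Once the combinatorial identity for $w^{(k)}$ on $\Gamma^{(k,\ell)}$ is pinned down, the rest is the triangle inequality plus the $O(1)$ reference-edge estimates for the vertex shape functions, and the two displayed bounds follow termwise.
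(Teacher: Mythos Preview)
Your approach is essentially the paper's: compute $w^{(k)}|_{\Gamma^{(k,\ell)}}$ explicitly as $u^{(k)}-u^{(\ell)}$ plus vertex-supported corrections carried by the nodal functions $\theta^{(k,\ell,i)}$, then bound the correction. Two places deserve a bit more care. First, for Algorithms~A and~C, the fact that the vertex constraints are omitted only gives that the vertex \emph{coefficients} of $w^{(k)}$ vanish; to conclude $r\equiv 0$ in your decomposition $w^{(k)}=(u^{(k)}-u^{(\ell)})+r$ you must also use that $u\in\widetilde W$ forces $u^{(k)}(\textbf x^{(k,\ell,i)})=u^{(\ell)}(\textbf x^{(k,\ell,i)})$, so that the function $u^{(k)}-u^{(\ell)}$ itself vanishes at the endpoints---the paper invokes this explicitly. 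Second, for Algorithm~B your ``local, scale-invariant'' remark handles the $h$-dependence of $|\theta^{(k,\ell,i)}|_{H^{1/2}}$ but not the $p$-dependence, and the nodal function is not fixed: it behaves like $\max\{0,1-|x-\textbf x|/h\}^p$. The paper checks $p$-uniformity via $|\theta|_{H^{1/2}(\Gamma)}^2\lesssim |\theta|_{H^1(\Gamma)}\|\theta\|_{L_2(\Gamma)}\eqsim \sqrt{p/h}\cdot\sqrt{h/p}=1$, which is the step that makes the estimate fit the $p$-explicit theme of the paper.
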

\begin{proof}
	First discuss the entries of the scaling matrix $D$, which was defined to
	be a diagonal matrix with coefficients $d_{i,i}$ for $i=1,\ldots,N_\Gamma$.
	$d_{i,i}$ is defined to be the number of Lagrange multipliers that act on
	the corresponding basis function. For each edge basis function, i.e., a
	basis function that is active on one edge and vanishes on all vertices, we
	have only one Lagrange multiplier. Thus, $d_{i,i}=1$ for the corresponding
	values of $i$.
	
	For the vertex basis functions, we have to distinguish based on the
	choice of the primal degrees of freedom. For the Algorithms~A and C,
	no Lagrange multiplier acts on the respective degree of freedom. Thus,
	we have $d_{i,i}=1$. For
	Algorithm~B, we have $d_{i,i}=|\mathcal P(\textbf x^{(i)})|-1$, where
	$\textbf{x}^{(i)}$ is the corresponding vertex, since we use a fully
	redundant scheme.
	
	Simple calculations yield for Algorithms~A and C that
	\[
	w^{(k)}|_{\Gamma^{(k,\ell)}} = u^{(k)}|_{\Gamma^{(k,\ell)}} - u^{(\ell)}|_{\Gamma^{(k,\ell)}} 
	- \sum_{i=1}^2 \theta^{(k,\ell,i)}
	\big( u^{(k)}(\textbf x^{(k,\ell,i)}) - u^{(\ell)}(\textbf x^{(k,\ell,i)}) \big).
	\]
	where $\theta^{(k,\ell,i)}$ is the basis function in $\Phi^{(k)}$
	such that $\theta^{(k,\ell,i)}(\textbf x^{(k,\ell,i)})=1$.
	Since $u$ satisfies the primal constraints, we have
	$u^{(k)}(\textbf x^{(k,\ell,i)}) = u^{(\ell)}(\textbf x^{(k,\ell,i)})$ and thus
	\[
	w^{(k)}|_{\Gamma^{(k,\ell)}} = u^{(k)}|_{\Gamma^{(k,\ell)}} - u^{(\ell)}|_{\Gamma^{(k,\ell)}}.
	\]
	Therefore, we have
	$
	|w^{(k)}|_{H^{1/2}(\Gamma^{(k,\ell)})}^2
	\lesssim |u^{(k)}|_{H^{1/2}(\Gamma^{(k,\ell)})}^2 + | u^{(\ell)}|_{H^{1/2}(\Gamma^{(k,\ell)})}^2 
	$
	and
	$
	|w^{(k)}|_{L_\infty^0(\Gamma^{(k,\ell)})}^2
	\lesssim |u^{(k)}|_{L_\infty^0(\Gamma^{(k,\ell)})}^2 + 
	|u^{(\ell)}|_{L_\infty^0(\Gamma^{(k,\ell)})}^2, 
	$
	which finishes the proof for the Algorithms~A and C.
	
	For Algorithm~B, one obtains
	\[
	\begin{aligned}
	w^{(k)}|_{\Gamma^{(k,\ell)}} &= u^{(k)}|_{\Gamma^{(k,\ell)}} - u^{(\ell)}|_{\Gamma^{(k,\ell)}} 
	\\&+ \sum_{i=1}^2
	\frac{1}{|\mathcal{P}(\textbf x^{(k,\ell,i)})|-1}
	\theta^{(k,\ell,i)} \sum_{j\in\mathcal{P}(\textbf x^{(k,\ell,i)})\backslash\{\ell\}}
	\big( u^{(k)}(\textbf x^{(k,\ell,i)}) - u^{(j)}(\textbf x^{(k,\ell,i)}) \big).
	\end{aligned}
	\]
	Note that $\theta^{(k,\ell,i)}$ behaves like $\max\{0,1-|x-\textbf x^{(k,\ell,i)}|/h^{(k)}\}^p$. So,
	\begin{itemize}
		\item $|\theta^{(k,\ell,i)}|_{H^{1/2}(\Gamma^{(k,\ell)})}^2
		\lesssim |\theta^{(k,\ell,i)}|_{H^1(\Gamma^{(k,\ell)})}\|\theta^{(k,\ell,i)}\|_{L_2(\Gamma^{(k,\ell)})}
		\eqsim 1$, and
		\item $\|\theta^{(k,\ell,i)}\|_{L_\infty(\Gamma^{(k,\ell)})}^2= 1$.
	\end{itemize}
	Assumption~\ref{ass:neighbors} yields $|\mathcal{P}(\textbf x^{(k,\ell,i)})|\eqsim1$. Thus,
	we immediately obtain the desired result also for Algorithm~B.
\end{proof}
The term $\Delta^{(k,\ell,i)}$ contains the differences
$u^{(k)}(\textbf x^{(k,\ell,i)})-u^{(j)}(\textbf x^{(k,\ell,i)})$ for any patch $\Omega^{(j)}$ that shares
the corresponding vertex $\textbf x^{(k,\ell,i)}$ with the patch $\Omega^{(k)}$. The following
Lemma shows that these terms can be estimated from above with differences that only
involve patches sharing an edge.
\begin{lemma}\label{lem:cornerdiff0}
	We have
	\[
	\Delta^{(k,\ell,i)}\lesssim
	\sum_{m \in \mathcal{P}({\normalfont \textbf x}^{(k,\ell,i)})}
	\sum_{n \in \mathcal{P}({\normalfont \textbf x}^{(k,\ell,i)}) \cap \mathcal{N}_\Gamma(m) }
	|u^{(m)}({\normalfont \textbf x}^{(k,\ell,i)})-u^{(n)}({\normalfont \textbf x}^{(k,\ell,i)})|^2,
	\]
	where $\Delta^{(k,\ell,i)}$ is as in Lemma~\ref{lem:bbt}.
\end{lemma}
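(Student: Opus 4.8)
The plan is to exploit the combinatorial structure of the patches surrounding the common vertex. Write $\textbf x := \textbf x^{(k,\ell,i)}$ and consider the graph $\mathcal G$ whose nodes are the patch indices in $\mathcal P(\textbf x)$ and which has an edge between $m$ and $n$ whenever $n \in \mathcal N_\Gamma(m)$, i.e., whenever $\Omega^{(m)}$ and $\Omega^{(n)}$ share a whole edge; by Assumption~\ref{ass:conforming}, that edge then has $\textbf x$ as one of its endpoints. The crucial geometric observation is that $\mathcal G$ is connected: the patches in $\mathcal P(\textbf x)$ cover a neighborhood of $\textbf x$, two patches that are adjacent in this covering meet along an arc emanating from $\textbf x$, and Assumption~\ref{ass:conforming} (absence of T-junctions) forces this arc to be a complete edge of both patches. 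Walking around $\textbf x$ therefore yields a path in $\mathcal G$ between any two of its nodes. (For a boundary vertex one gets a chain rather than a cycle, which is still connected.)

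First I would fix $j \in \mathcal P(\textbf x)$ and choose a repetition-free path $k = m_0, m_1, \ldots, m_r = j$ in $\mathcal G$; since Assumption~\ref{ass:neighbors} yields $|\mathcal P(\textbf x)| \lesssim 1$, we have $r \le |\mathcal P(\textbf x)|-1 \lesssim 1$. The telescoping identity $u^{(k)}(\textbf x) - u^{(j)}(\textbf x) = \sum_{t=0}^{r-1}\big(u^{(m_t)}(\textbf x) - u^{(m_{t+1})}(\textbf x)\big)$ together with the Cauchy--Schwarz inequality gives
\[
|u^{(k)}(\textbf x) - u^{(j)}(\textbf x)|^2
\le r \sum_{t=0}^{r-1} |u^{(m_t)}(\textbf x) - u^{(m_{t+1})}(\textbf x)|^2
\lesssim
\sum_{m \in \mathcal P(\textbf x)} \sum_{n \in \mathcal P(\textbf x) \cap \mathcal N_\Gamma(m)} |u^{(m)}(\textbf x) - u^{(n)}(\textbf x)|^2,
\]
because each summand $|u^{(m_t)}(\textbf x) - u^{(m_{t+1})}(\textbf x)|^2$ appears in the double sum on the right (note that $m_{t+1} \in \mathcal P(\textbf x) \cap \mathcal N_\Gamma(m_t)$).

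Finally, summing this estimate over the at most $|\mathcal P(\textbf x)| \lesssim 1$ indices $j \in \mathcal P(\textbf x)$ and absorbing the factor $|\mathcal P(\textbf x)|$ into the hidden constant yields precisely $\Delta^{(k,\ell,i)} \lesssim \sum_{m}\sum_{n} |u^{(m)}(\textbf x) - u^{(n)}(\textbf x)|^2$, which is the asserted bound. Apart from the connectivity of $\mathcal G$, every step is the triangle inequality, Cauchy--Schwarz, or counting against the uniform neighbor bound from Assumption~\ref{ass:neighbors}; I therefore expect the connectivity statement — i.e., the careful reduction to the local sector picture around $\textbf x$ and the exclusion of sub-edge interfaces via Assumption~\ref{ass:conforming} — to be the only delicate point, and the place where most of the writing effort would go.
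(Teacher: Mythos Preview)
Your proposal is correct and follows essentially the same route as the paper: both argue that any two patches in $\mathcal P(\textbf x)$ can be connected by a chain of edge-adjacent patches around $\textbf x$, then telescope and use Assumption~\ref{ass:neighbors} to control the number of terms. You are actually more explicit than the paper about the connectivity of the adjacency graph $\mathcal G$, which the paper simply asserts; your identification of this as the only delicate point is spot on.
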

\begin{proof}
	The sum in $\Delta^{(k,\ell,i)}$ contains contributions of the form
	\[
	|u^{(k)}(\textbf x^{(k,\ell,i)})-u^{(\ell)}(\textbf x^{(k,\ell,i)})|^2.
	\]
	If the patches $\Omega^{(k)}$ and $\Omega^{(\ell)}$ share an edge, we are done.
	Otherwise, 
	there is a sequence $k=:n_0,n_1,\ldots,n_j:=\ell$ such that
	\begin{itemize}
		\item the patches $\Omega^{(n_i)}$ and $\Omega^{(n_{i-1})}$ share an edge,
		thus $n_i\in\mathcal{N}_\Gamma(n_{i-1})$ for all $i=1,\ldots,j$ and
		\item all patches $\Omega^{(n_i)}$ contain the vertex $\textbf x^{(k,\ell,i)}$, thus $n_i \in \mathcal{P}(\textbf x^{(k,\ell,i)})$ for all $i=0,\ldots,j$.
	\end{itemize}
	Thus, we have using Assumption~\ref{ass:neighbors}
	\[
	|u^{(k)}(\textbf x^{(k,\ell,i)})-u^{(\ell)}(\textbf x^{(k,\ell,i)})|^2
	\lesssim \sum_{i=1}^j 
	|u^{(n_{i-1})}(\textbf x^{(k,\ell,i)})-u^{(n_{i})}(\textbf x^{(k,\ell,i)})|^2,
	\]
	which finishes the proof since
	$n_{i-1}\in \mathcal P(\textbf x^{(k,\ell,i)})$ and
	$n_{i}\in \mathcal P(\textbf x^{(k,\ell,i)})\cap \mathcal{N}_\Gamma(n_i)$. 
\end{proof}

\begin{lemma}\label{lem:cornerdiff}
	Let $\Omega^{(k)}$ and $\Omega^{(\ell)}$ be two patches sharing
	the edge $\Gamma^{(k,\ell)}$. Assume that $\Gamma^{(k,\ell)}$ connects
	the two vertices ${\normalfont \textbf x}^{(k,\ell,1)}$ and ${\normalfont \textbf x}^{(k,\ell,2)}$.
	Provided that the integrals over the edge agree, i.e., $\int_{\Gamma^{(k,\ell)}} u^{(k)}(s)-u^{(\ell)}(s)\, \mathrm{d}s = 0$,
	we have
	\[
	\sum_{i=1}^2 |u^{(k)}({\normalfont \textbf x}^{(k,\ell,i)})-u^{(\ell)}({\normalfont \textbf x}^{(k,\ell,i)})|^2
	\lesssim
	\Lambda
	\Big(
	|\mathcal{H}_h^{(k)} u^{(k)}|_{H^1(\Omega^{(k)})}^2
	+ |\mathcal{H}_h^{(\ell)} u^{(\ell)}|_{H^1(\Omega^{(\ell)})}^2
	\Big)
	\]
	for all $u=(u^{(1)},\ldots,u^{(K)})\in V$,
	where $\Lambda := 1+\log p + \max_{j=1,\ldots,K} \log \frac{H^{(j)}}{h^{(j)}}$.
\end{lemma}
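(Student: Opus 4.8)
The plan is to reduce the claim to a bound on the variation $|\,\cdot\,|_{L_\infty^0(\Gamma^{(k,\ell)})}$ of the two traces along the shared edge, and then to estimate that variation by the energy of the discrete harmonic extension using the logarithmic embedding of Lemma~\ref{lem:param:point:values}. I would set $\delta := u^{(k)}|_{\Gamma^{(k,\ell)}} - u^{(\ell)}|_{\Gamma^{(k,\ell)}}$, a continuous function on $\overline{\Gamma^{(k,\ell)}}$. The hypothesis $\int_{\Gamma^{(k,\ell)}}\delta\,\mathrm ds=0$ forces, by the mean value theorem for integrals, a point $\xi_0\in\Gamma^{(k,\ell)}$ with $\delta(\xi_0)=0$; since the vertices $\textbf x^{(k,\ell,i)}$ lie in $\overline{\Gamma^{(k,\ell)}}$, the second relation of~\eqref{eq:variation} then yields $|u^{(k)}(\textbf x^{(k,\ell,i)})-u^{(\ell)}(\textbf x^{(k,\ell,i)})|^2 = |\delta(\textbf x^{(k,\ell,i)})-\delta(\xi_0)|^2 \lesssim |\delta|_{L_\infty^0(\Gamma^{(k,\ell)})}^2$ for $i=1,2$. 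As $|\,\cdot\,|_{L_\infty^0(\Gamma^{(k,\ell)})}$ is a seminorm, $|\delta|_{L_\infty^0(\Gamma^{(k,\ell)})}\lesssim|u^{(k)}|_{L_\infty^0(\Gamma^{(k,\ell)})}+|u^{(\ell)}|_{L_\infty^0(\Gamma^{(k,\ell)})}$, so it suffices to show $|u^{(m)}|_{L_\infty^0(\Gamma^{(k,\ell)})}^2\lesssim\Lambda\,|\mathcal H_h^{(m)}u^{(m)}|_{H^1(\Omega^{(m)})}^2$ for $m\in\{k,\ell\}$.

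For this, I would pass to the discrete harmonic extension $v:=\mathcal H_h^{(m)}u^{(m)}\in V^{(m)}$: it has the same trace on $\partial\Omega^{(m)}$ as $u^{(m)}$, hence the same restriction to $\Gamma^{(k,\ell)}$, so $|u^{(m)}|_{L_\infty^0(\Gamma^{(k,\ell)})}=|v|_{L_\infty^0(\Gamma^{(k,\ell)})}$, while the seminorm appearing on the right-hand side becomes exactly $|v|_{H^1(\Omega^{(m)})}$. Since $\Gamma^{(k,\ell)}\subset\overline{\Omega^{(m)}}$, one has $|v|_{L_\infty^0(\Gamma^{(k,\ell)})}^2\le\|v-\bar v\|_{L_\infty(\Omega^{(m)})}^2$ with $\bar v$ the mean of $v$ over $\Omega^{(m)}$, and Lemma~\ref{lem:param:point:values} bounds the right-hand side by $\Lambda\big(|v|_{H^1(\Omega^{(m)})}^2+(H^{(m)})^{-2}\|v-\bar v\|_{L_2(\Omega^{(m)})}^2\big)$. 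The $L_2$-term is then removed by the Poincar\'e inequality on $\widehat\Omega$ together with the norm equivalences of Lemma~\ref{lem:geoequiv}, leaving $\lesssim\Lambda\,|v|_{H^1(\Omega^{(m)})}^2$. Adding the two contributions $m\in\{k,\ell\}$ and noting that each patchwise factor $1+\log p+\log\frac{H^{(m)}}{h^{(m)}}$ is bounded by $\Lambda$ would complete the proof.

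The proof is elementary once Lemma~\ref{lem:param:point:values} is at hand, and indeed that embedding is the heart of the matter --- it is where the single power of $\Lambda$ originates. The only technical nuisance I anticipate is that $v-\bar v$ need not vanish on the Dirichlet part of $\partial\Omega^{(m)}$, so Lemma~\ref{lem:param:point:values} is not literally applicable to it for patches meeting $\partial\Omega$; this is harmless because the proof of that lemma is purely local and ignores the boundary condition, so it holds for every function in $S[p,\Xi^{(m,1)}]\otimes S[p,\Xi^{(m,2)}]$, and one may equivalently apply the parameter-domain estimate to $\widehat v-\overline{\widehat v}$ and transform back via Lemma~\ref{lem:geoequiv}. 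No genuine obstacle is expected in the remaining steps.
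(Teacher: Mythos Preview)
Your argument is correct and rests on the same key ingredient as the paper's proof, namely the logarithmic point-value bound of Lemma~\ref{lem:param:point:values} followed by a Poincar\'e-type step; however, you organise the reduction differently. The paper aligns both parameter domains so that the shared edge becomes $(0,1)\times\{0\}$, forms the single function $\widehat{\mathcal H}_h^{(k)}\widehat u^{(k)}-\widehat{\mathcal H}_h^{(\ell)}\widehat u^{(\ell)}$ on~$\widehat\Omega$, applies Lemma~\ref{lem:param:point:values} to it, and then invokes a Poincar\'e inequality in which the edge-average functional replaces the mean (Theorem~1.24 in Ref.~\refcite{Pechstein:2013a}) to pass from the full $H^1$-norm to the seminorm; the triangle inequality is used only at the very end. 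You instead exploit the vanishing edge average to produce a zero of $\delta$ on the edge, bound the vertex jumps by $|\delta|_{L_\infty^0(\Gamma^{(k,\ell)})}$, split immediately via the triangle inequality, and then apply Lemma~\ref{lem:param:point:values} and the standard mean-zero Poincar\'e inequality on each patch separately. Your route sidesteps the need to place both discrete harmonic extensions on one parameter domain (and the attendant technical point that their difference lies in $\widehat V^{(k)}+\widehat V^{(\ell)}$ rather than in a single $\widehat V^{(m)}$), while the paper's route avoids your observation about $v-\bar v$ possibly violating the Dirichlet condition; both issues are harmless for the reason you give, since the proof of Lemma~\ref{lem:param:point:values} uses only the local piecewise-polynomial structure and the quasi-uniformity of the mesh.
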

\begin{proof}
	Let $\widehat{u}^{(k)}:= u^{(k)}\circ G_k$ and $\widehat{u}^{(\ell)}:= u^{(\ell)}\circ G_\ell$.
	By an unitary transformation (rotation, reflection), we obtain a representation such that
	the pre-image of the joint edge $\Gamma^{(k,\ell)}$ is $(0,1)\times\{0\}$ and that
	the pre-image of $\textbf x^{(k,\ell,1)}$ is $0$. Thus, the assumption on the edge average reads as
	\[
	\int_{(0,1)\times\{0\}} \widehat u^{(k)}(x) - \widehat u^{(\ell)}(x) \mathrm{d}x =0.
	\]
	In the interior, consider the difference of the respective discrete harmonic extensions
	$
	\widehat{\mathcal H}_h^{(k)} \widehat u^{(k)}
	- \widehat{\mathcal H}_h^{(\ell)} \widehat u^{(\ell)}.
	$
	Lemma~\ref{lem:param:point:values} yields
	\[
	|u^{(k)}(\textbf x^{(k,\ell,1)}) - u^{(\ell)}(\textbf x^{(k,\ell,1)})|^2
	= |\widehat u^{(k)}(0) - \widehat u^{(\ell)}(0)|^2
	\lesssim
	\Lambda^{(k,\ell)}
	\| \widehat{\mathcal H}_h^{(k)} \widehat u^{(k)}
	- \widehat{\mathcal H}_h^{(\ell)} \widehat u^{(\ell)} \|_{H^1(\widehat{\Omega})}^2.
	\]
	Using Theorem~1.24 in Ref.~\refcite{Pechstein:2013a} (using the choice
	$\psi(u):=\int_{\widehat\Gamma} u(s) \mathrm{d}s$), we obtain further
	\begin{align*}
		|u^{(k)}(\textbf x^{(k,\ell,1)}) - u^{(\ell)}(\textbf x^{(k,\ell,1)})|^2
		&\lesssim
		\Lambda^{(k,\ell)}
		| \widehat{\mathcal H}_h^{(k)} \widehat u^{(k)}
		- \widehat{\mathcal H}_h^{(\ell)} \widehat u^{(\ell)} |_{H^1(\widehat{\Omega})}^2\\
		&\lesssim
		\Lambda^{(k,\ell)}
		(| \widehat{\mathcal H}_h^{(k)} \widehat u^{(k)}|_{H^1(\widehat{\Omega})}^2
		+| \widehat{\mathcal H}_h^{(\ell)} \widehat u^{(\ell)} |_{H^1(\widehat{\Omega})}^2),
	\end{align*}
	which finishes the proof.
\end{proof}

Using the last three Lemmas and Assumption~\ref{ass:neighbors}, we immediately obtain
\begin{equation}\label{eq:bbt}
\begin{aligned}
&\sum_{k=1}^K \sum_{\ell\in \mathcal{N}_\Gamma(k)} |w^{(k)}|_{H^{1/2}(\Gamma^{(k,\ell)})}^2
\lesssim
\sum_{k=1}^K\sum_{\ell\in \mathcal{N}_\Gamma(k)} \left(  |u^{(k)}|_{H^{1/2}(\Gamma^{(k,\ell)})}^2
+ \Delta^{(k,\ell,1)} + \Delta^{(k,\ell,2)} \right) \\
& \qquad\lesssim
\sum_{k=1}^K \sum_{\ell\in \mathcal{N}_\Gamma(k)}
\left( |u^{(k)}|_{H^{1/2}(\Gamma^{(k,\ell)})}^2
+  \sum_{i=1}^2 |u^{(k)}(\textbf x^{(k,\ell,i)}) - u^{(\ell)}(\textbf x^{(k,\ell,i)})|^2 \right) \\
& \qquad\lesssim
\sum_{k=1}^K\sum_{\ell\in \mathcal{N}_\Gamma(k)}  |u^{(k)}|_{H^{1/2}(\Gamma^{(k,\ell)})}^2
+ \Lambda 
\sum_{k=1}^K |\mathcal{H}_h^{(k)} u^{(k)}|_{H^1(\Omega^{(k)})}^2,
\end{aligned}
\end{equation}
where $\Lambda$ is as in Lemma~\ref{lem:cornerdiff},
and an analogous estimate for the $L_\infty^0(\Gamma^{(k,\ell)})$-seminorms.
Finally, we are able to give a proof of the main theorem.

\begin{proof}\textbf{(of Theorem~\ref{thrm:fin}).}
	Let $u=(u^{(1)},\cdots,u^{(K)})$ with coefficient vector $\underline u$
	be arbitrary but fixed
	and let $w=(w^{(1)},\cdots,w^{(K)})$ with coefficient vector $\underline w$
	be such that
	$
	\underline w = B_D^\top B \underline u
	$.
	Theorem~\ref{thrm:discrharm:ext} yields
	\[
	\| B_D^\top B \underline u \|_S^2
	= \|  \underline w \|_S^2
	= \sum_{k=1}^K |  \mathcal H_h^{(k)} w^{(k)} |_{H^1(\Omega^{(k)})}^2
	\lesssim p \sum_{k=1}^K | w^{(k)} |_{H^{1/2}(\partial\Omega^{(k)})}^2,
	\]
	where $\mathcal H_h^{(k)}$ is the discrete harmonic extension into
	$H^1(\Omega^{(k)})$.
	Lemma~\ref{lem:tearing} yields further
	\[
	\| B_D^\top B \underline u \|_S^2
	\lesssim p \Lambda
	\sum_{k=1}^K\sum_{\ell \in \mathcal N(k)}
	\left( | w^{(k)} |_{H^{1/2}(\Gamma^{(k,\ell)})}^2
	+ | w^{(k)} |_{L_\infty^0(\Gamma^{(k,\ell)})}^2 \right),
	\]
	where $\Lambda$ is as in Lemma~\ref{lem:cornerdiff}.
	Using~\eqref{eq:bbt}, we obtain 
	\begin{align*}
		\| B_D^\top B \underline u \|_S^2
		&\lesssim p \Lambda
		\sum_{k=1}^K
		\sum_{\ell \in \mathcal N_\Gamma(k)}
		\Big( | u^{(k)} |_{H^{1/2}(\Gamma^{(k,\ell)})}^2
		+	|u^{(k)}|_{L_\infty^0(\Gamma^{(k,\ell)})}^2\Big)
		\\&\qquad + p\Lambda^2 \sum_{k=1}^K | \mathcal{H}_h^{(k)} u^{(k)} |_{H^1(\Omega^{(k)})}^2
		.
	\end{align*}
	Lemma~\ref{lem:tearing} and Theorem~\ref{thrm:discrharm:ext} yield
	\begin{align*}
		\| B_D^\top B \underline u \|_S^2 
		&		\lesssim 
		\sum_{k=1}^K
		p \Lambda^2 | \mathcal H_h^{(k)} u^{(k)} |_{H^1(\Omega^{(k)})}^2 
		+ p \Lambda
		| u^{(k)} |_{L_\infty^0(\partial\Omega^{(k)})}^2 .
	\end{align*}
	Lemma \ref{lem:param:point:values} yields
	\[
	\| B_D^\top B \underline u \|_S^2	\lesssim p \Lambda^2
	\sum_{k=1}^K \inf_{c\in \mathbb R}\left(
	| \mathcal H_h^{(k)} u^{(k)} |_{H^1(\Omega^{(k)})}^2
	+  (H^{(k)})^{-2} \| \mathcal H_h^{(k)} (u^{(k)}-c) \|_{L_2(\Omega^{(k)})}^2				
	\right).
	\]
	A standard Poincar{\'e} inequality yields further
	\[
	\| B_D^\top B \underline u \|_S^2
	\lesssim p \Lambda^2
	\sum_{k=1}^K 
	| \mathcal H_h^{(k)} u^{(k)} |_{H^1(\Omega^{(k)})}^2
	= p \Lambda^2\| \underline u \|_S^2
	.
	\]
	The combination of this estimate and~\eqref{eq:MandelDohrmannTezaur} finishes the proof.
\end{proof}

\section{Numerical results}
\label{sec:5}
%
%
%

In this section, we give results from numerical experiments that illustrate
the convergence theory presented in this paper. We consider the Poisson problem
\[
\begin{aligned}
- \Delta u(x,y) & = 2\pi^2 \sin(\pi x)\sin(\pi y) &&\qquad \mbox{for}\quad (x,y)\in\Omega \\
u & = 0 &&\qquad \mbox{on}\quad \partial\Omega,
\end{aligned}
\]
where we consider the two domains
shown in Figure~\ref{fig:computational domains}.
The first domain is a circular ring consisting of 12 patches. Each patch
is parameterized using a NURBS mapping of degree $2$.
The second domain is the Yeti-footprint, where we have decomposed the
$12$ patches of the standard representation into $84$ patches to obtain
a representation with inner vertices as well.

\begin{figure}[h]
	\centering
	\begin{subfigure}{.4\textwidth}
		\centering
		\includegraphics[width=.7\textwidth]{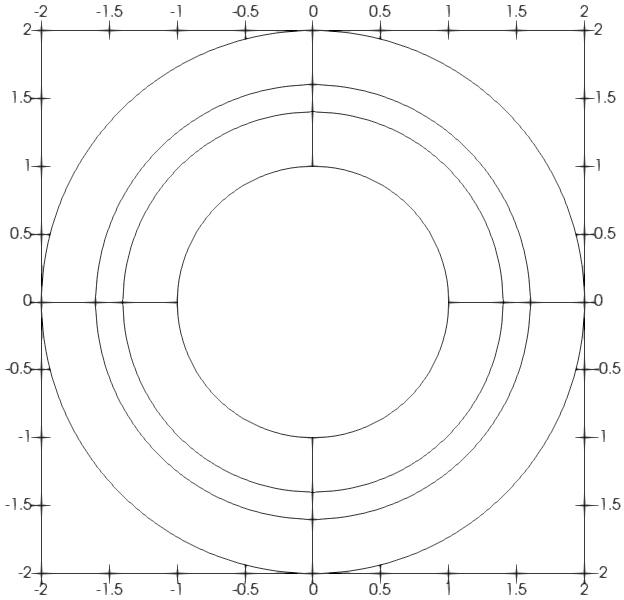}
		{\tiny \caption{Circular ring}\label{subfig:Rings}}
	\end{subfigure} 
	\begin{subfigure}{.4\textwidth}
		\centering
		\includegraphics[width=.7\textwidth]{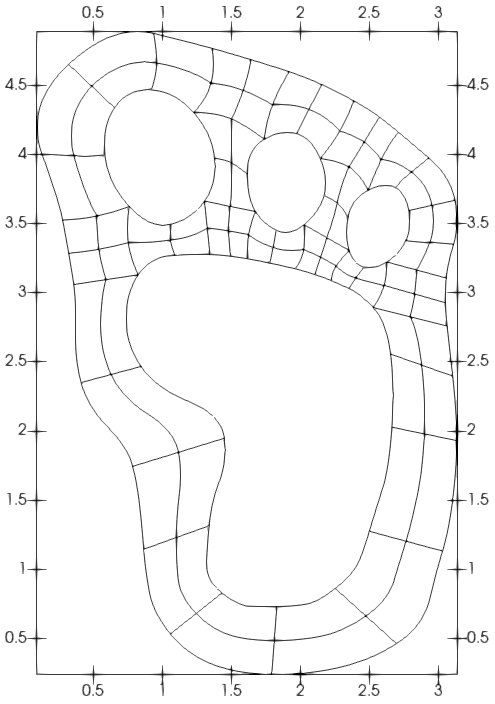}
		{\tiny \caption{Yeti-footprint}
			\label{subfig:Yeti}}
	\end{subfigure} 
	\caption{Computational domains and the decomposition into patches}
	\label{fig:computational domains}
\end{figure}

The numerical experiments are based on the presented IETI-DP approach (Algorithms~A, B 
and C), where the primal degrees of freedom for the corners are implemented by elimination
of the corresponding degrees of freedom as suggested in Section~\ref{sec:3}. 
As patch-local discretization spaces $\widehat V^{(k)}$, we use spline spaces of maximum
smoothness. The discretization of the triple ring on the coarsest grid level ($r=0$) only
consists for each of the patches only of global polynomials, i.e., there are no inner knots.
The same holds for most of the patches of the Yeti-footprint, while the discretization for
the 20 patches that have a non-square shape have one inner knot that connects the midpoints
of the two longer sides of the patch. The finer discretizations ($r=1,2,\ldots$) are obtained by a uniform
refinement such that the grid size behaves like $h\eqsim 2^{-r}$. The newly introduced
knots are single knots, so splines of maximum smoothness are obtained in the interior
of each patch.
The local subproblems are solved using a standard direct solver. The overall problem is solved
using a conjugate gradient solver, preconditioned with the scaled Dirichlet
preconditioner. The starting value has been a vector with random entries in the interval $[-1,1]$.
The iteration has been stopped when the $\ell_2$-norm of the residual
has been reduced by a factor $10^{-6}$ compared to the $\ell_2$-norm of the initial residual.
For all numerical experiments, we present the required number of iterations (it) and the
condition number ($\kappa$) of the preconditioned system $M_{\mathrm{sD}}F$ as estimated
by the conjugate gradient solver.

\begin{table}[t]
\scriptsize
	\newcolumntype{L}[1]{>{\raggedleft\arraybackslash\hspace{-1em}}m{#1}}
	\centering
	\renewcommand{\arraystretch}{1.25}
	\begin{tabular}{l|L{1em}L{1.8em}|L{1em}L{1.8em}|L{1em}L{1.8em}|L{1em}L{1.8em}|L{1em}L{1.8em}|L{1em}L{1.8em}|L{1em}L{1.8em}}
		\toprule
		\multicolumn{1}{l}{r$\;\;\diagdown\;\;$p\hspace{-1.8em}\;}
		& \multicolumn{2}{c|}{2}
		& \multicolumn{2}{c|}{3}
		& \multicolumn{2}{c|}{4}
		& \multicolumn{2}{c|}{5}
		& \multicolumn{2}{c|}{6}
		& \multicolumn{2}{c|}{7}
		& \multicolumn{2}{c}{8} \\
		& it & $\kappa$
		& it & $\kappa$
		& it & $\kappa$
		& it & $\kappa$
		& it & $\kappa$
		& it & $\kappa$
		& it & $\kappa$ \\
		\midrule
		$2$  & $10$ & $5.96$ & $11$ & $6.46$ & $12$ & $6.79$ & $12$ & $7.29$ & $13$ & $ 7.53$ & $14$ & $7.99$ & $15$ & $8.16$ \\
		$3$  & $11$ & $6.69$ & $12$ & $7.30$ & $12$ & $7.81$ & $13$ & $8.28$ & $13$ & $8.64$ & $14$ & $9.03$ & $15$ & $9.32$ \\
		$4$  & $12$ & $7.67$ & $12$ & $8.41$ & $13$ & $9.02$ & $13$ & $9.53$ & $14$ & $9.98$ & $14$ & $10.38$ & $14$ & $10.73$ \\
		$5$  & $12$ & $8.81$ & $13$ & $9.68$ & $14$ & $10.38$ & $14$ & $10.96$ & $14$ & $11.47$ & $15$ & $11.91$ & $15$ & $12.31$ \\
		$6$  & $14$ & $10.14$ & $14$ & $11.13$ & $15$ & $11.91$ & $15$ & $12.57$ & $15$ & $13.13$ & $15$ & $13.63$ & $15$ & $14.07$ \\
		$7$  & $15$ & $11.65$ & $15$ & $12.75$ & $16$ & $13.62$ & $17$ & $14.35$ & $16$ & $14.97$ & $17$ & $15.52$ & $17$ & $16.00$ \\
		$8$  & $15$ & $13.33$ & $17$ & $14.56$ & $17$ & $15.51$ & $17$ & $16.31$ & $17$ & $16.99$ & $17$ &$17.58$ & $17$ &$18.11$ \\
		\bottomrule
	\end{tabular}
	\captionof{table}{Iteration counts (it) and condition numbers $\kappa$; Algorithm A; triple ring
		\label{tab:TripleRing_vertex}}
\end{table}
\begin{table}[t]
\scriptsize
	\newcolumntype{L}[1]{>{\raggedleft\arraybackslash\hspace{-1em}}m{#1}}
	\centering
	\renewcommand{\arraystretch}{1.25}
	\begin{tabular}{l|L{1em}L{1.8em}|L{1em}L{1.8em}|L{1em}L{1.8em}|L{1em}L{1.8em}|L{1em}L{1.8em}|L{1em}L{1.8em}|L{1em}L{1.8em}}
		\toprule
		\multicolumn{1}{l}{r$\;\;\diagdown\;\;$p\hspace{-1.8em}\;}
		& \multicolumn{2}{c|}{2}
		& \multicolumn{2}{c|}{3}
		& \multicolumn{2}{c|}{4}
		& \multicolumn{2}{c|}{5}
		& \multicolumn{2}{c|}{6}
		& \multicolumn{2}{c|}{7}
		& \multicolumn{2}{c}{8} \\
		& it & $\kappa$
		& it & $\kappa$
		& it & $\kappa$
		& it & $\kappa$
		& it & $\kappa$
		& it & $\kappa$
		& it & $\kappa$ \\
		\midrule
		$2$  & $14$ & $15.43$ & $13$ &  $15.69$ & $13$ & $15.56$ & $15$ & $16.32$ & $15$ & $16.33$ & $16$ &  $17.01$ & $16$ &  $17.04$ \\
		$3$  & $15$ & $18.32$ & $15$ &  $18.51$ & $15$ & $18.93$ & $16$ & $19.45$ & $17$ & $19.81$ & $17$ & $20.30$ & $18$ & $20.63$ \\
		$4$  & $16$ &  $21.52$ & $16$ & $21.77$ & $16$ & $22.24$ & $16$ & $22.73$ & $16$ & $23.19$ & $17$ & $23.63$ & $18$ & $24.02$ \\
		$5$  & $17$ & $24.92$ & $17$ & $25.17$ & $18$ & $25.67$ & $17$ & $26.18$ & $18$ & $26.67$ & $18$ & $27.12$ & $18$ & $27.54$ \\
		$6$  & $18$ & $28.47$ & $18$ & $28.73$ & $19$ & $29.25$ & $19$ & $29.78$ & $19$ & $30.28$ & $19$ & $30.75$ & $21$ & $31.19$ \\
		$7$  & $19$ & $32.16$ & $19$ & $32.43$ & $21$ & $32.97$ & $22$ & $33.52$ & $21$ & $34.05$ & $22$ & $34.54$ & $21$ & $34.99$ \\
		$8$  & $21$ & $36.01$ & $22$ & $36.28$ & $22$ & $36.84$ & $22$ & $37.42$ & $23$ & $37.96$ & $23$ & $38.47$ & $24$ & $38.95$ \\
		\bottomrule
	\end{tabular}
	\captionof{table}{Iteration counts (it) and condition numbers $\kappa$; Algorithm B; triple ring
		\label{tab:TripleRing_edge}}
\end{table}
\begin{table}[t]
\scriptsize
	\newcolumntype{L}[1]{>{\raggedleft\arraybackslash\hspace{-1em}}m{#1}}
	\centering
	\renewcommand{\arraystretch}{1.25}
	\begin{tabular}{l|L{1em}L{1.8em}|L{1em}L{1.8em}|L{1em}L{1.8em}|L{1em}L{1.8em}|L{1em}L{1.8em}|L{1em}L{1.8em}|L{1em}L{1.8em}}
		\toprule
		\multicolumn{1}{l}{r$\;\;\diagdown\;\;$p\hspace{-1.8em}\;}
		& \multicolumn{2}{c|}{2}
		& \multicolumn{2}{c|}{3}
		& \multicolumn{2}{c|}{4}
		& \multicolumn{2}{c|}{5}
		& \multicolumn{2}{c|}{6}
		& \multicolumn{2}{c|}{7}
		& \multicolumn{2}{c}{8} \\
		& it & $\kappa$
		& it & $\kappa$
		& it & $\kappa$
		& it & $\kappa$
		& it & $\kappa$
		& it & $\kappa$
		& it & $\kappa$ \\
		\midrule
		$2$  & $9$ & $2.18$ & $9$ & $2.33$ & $10$ & $2.55$ & $10$ & $2.68$ & $11$ & $2.91$ & $11$ & $3.01$ & $12$ & $3.24$ \\
		$3$  & $10$ & $2.46$ & $10$ & $2.74$ & $10$ & $3.00$ & $11$ & $3.21$ & $11$ & $3.42$ & $12$ & $3.58$ & $12$ & $3.77$ \\
		$4$  & $10$ & $2.92$ & $11$ & $3.29$ & $11$ & $3.59$ & $11$ & $3.85$ & $12$ & $4.07$ & $12$ & $4.26$ & $12$ & $4.44$ \\
		$5$  & $11$ & $3.47$ & $11$ & $3.93$ & $12$ & $4.27$ & $12$ & $4.55$ & $13$ & $4.80$ & $13$ & $5.01$ & $13$ & $5.20$ \\
		$6$  & $12$ & $4.16$ & $12$ & $4.64$ & $13$ & $5.01$ & $13$ & $5.32$ & $13$ & $5.59$ & $14$ & $5.82$ & $14$ & $6.03$ \\
		$7$  & $13$ & $4.89$ & $14$ & $5.42$ & $14$ & $5.82$ & $15$ & $6.16$ & $15$ & $6.44$ & $15$ & $6.69$ & $16$ & $6.91$ \\
		$8$  & $14$ & $5.69$ & $15$ & $6.25$ & $15$ & $6.69$ & $16$ & $7.05$ & $16$ & $7.35$ & $16$ & $7.61$ & $16$ & $7.84$ \\
		\bottomrule
	\end{tabular}
	\captionof{table}{Iteration counts (it) and condition numbers $\kappa$; Algorithm C; triple ring
		\label{tab:TripleRing_vertexEdge}}
\end{table}

In the Tables~\ref{tab:TripleRing_vertex}, \ref{tab:TripleRing_edge}
and~\ref{tab:TripleRing_vertexEdge}, we present the numerical experiments
for the triple ring (Figure~\ref{subfig:Rings}).
In Table~\ref{tab:TripleRing_vertex}, we provide the results for the case
that only the vertex values are chosen as primal degrees of freedom (Algorithm~A).
Here, we observe that the condition number grows like $\log^2 H/h$ as predicted
by the theory. The growth in the spline degree seems to be like $\sqrt p$ or even
slower, while the theory predicts $p \log^2 p$.
In Table~\ref{tab:TripleRing_edge}, we consider the case that only the averages
of the function values on the edges are chosen as primal degrees of freedom (Algorithm~B).
Here, the condition number seems to be larger than that of Algorithm~A. Here, the
growth of the condition number in the grid size looks like $\log^2 H/h$. The growth
in the spline degree seems to be linear.
Table~\ref{tab:TripleRing_vertexEdge} shows the results for the case that both
kinds of primal degrees of freedom are combined (Algorithm~C). As expected, the
obtained condition numbers are smaller than those obtained with Algorithms A and B.
We observe the same behavior as for Algorithm~A.

\begin{table}[t]
\scriptsize
	\newcolumntype{L}[1]{>{\raggedleft\arraybackslash\hspace{-1em}}m{#1}}
	\centering
	\renewcommand{\arraystretch}{1.25}
	\begin{tabular}{l|L{1em}L{1.8em}|L{1em}L{1.8em}|L{1em}L{1.8em}|L{1em}L{1.8em}|L{1em}L{1.8em}|L{1em}L{1.8em}|L{1em}L{1.8em}}
		\toprule
		\multicolumn{1}{l}{r$\;\;\diagdown\;\;$p\hspace{-1.8em}\;}
		& \multicolumn{2}{c|}{2}
		& \multicolumn{2}{c|}{3}
		& \multicolumn{2}{c|}{4}
		& \multicolumn{2}{c|}{5}
		& \multicolumn{2}{c|}{6}
		& \multicolumn{2}{c|}{7}
		& \multicolumn{2}{c}{8}  \\
		& it & $\kappa$
		& it & $\kappa$
		& it & $\kappa$
		& it & $\kappa$
		& it & $\kappa$
		& it & $\kappa$
		& it & $\kappa$  \\
		\midrule
		$1$  & $12$ &  $3.02$ & $14$ &  $4.06$ & $15$ &  $4.72$ & $17$ &  $5.60$ & $18$ &  $6.13$ & $20$ &  $6.88$ & $22$ &  $7.33$ \\
		$2$  & $14$ &  $4.64$ & $16$ &  $5.76$ & $17$ &  $6.64$ & $18$ &  $7.49$ & $20$ &  $8.15$ & $21$ &  $8.86$ & $22$ &  $9.39$ \\
		$3$  & $16$ &  $6.71$ & $19$ &  $8.03$ & $20$ &  $9.11$ & $20$ &  $10.04$ & $21$ & $10.83$ & $23$ & $11.57$ & $24$ & $12.20$ \\
		$4$  & $20$ &  $9.22$ & $22$ & $10.77$ & $22$ & $12.03$ & $23$ & $13.09$ & $23$ & $14.01$ & $24$ & $14.84$ & $26$ & $15.57$ \\
		$5$  & $22$ & $12.17$ & $23$ & $13.96$ & $25$ & $15.40$ & $25$ & $16.60$ & $26$ & $17.64$ & $27$ & $18.59$ & $27$ & $19.39$ \\
		$6$  & $25$ & $15.58$ & $26$ & $17.59$ & $26$ & $19.20$ & $27$ & $20.55$ & $28$ & $21.72$ & $28$ & $22.73$ & $29$ & $23.65$ \\
		$7$  & $27$ & $19.42$ & $28$ & $21.69$ & $28$ & $23.48$ & $29$ & $24.94$ & $31$ & $26.26$ & $30$ & $27.35$ & $32$ & $28.79$ \\
		\bottomrule
	\end{tabular}
	\captionof{table}{Iteration counts (it) and condition numbers $\kappa$; Algorithm A; Yeti-footprint
		\label{tab:Yeti_vertex}}
\end{table}
\begin{table}[t]
\scriptsize
	\newcolumntype{L}[1]{>{\raggedleft\arraybackslash\hspace{-1em}}m{#1}}
	\centering
	\renewcommand{\arraystretch}{1.25}
	\begin{tabular}{l|L{1em}L{1.8em}|L{1em}L{1.8em}|L{1em}L{1.8em}|L{1em}L{1.8em}|L{1em}L{1.8em}|L{1em}L{1.8em}|L{1em}L{1.8em}}
		\toprule
		\multicolumn{1}{l}{r$\;\;\diagdown\;\;$p\hspace{-1.8em}\;}
		& \multicolumn{2}{c|}{2}
		& \multicolumn{2}{c|}{3}
		& \multicolumn{2}{c|}{4}
		& \multicolumn{2}{c|}{5}
		& \multicolumn{2}{c|}{6}
		& \multicolumn{2}{c|}{7}
		& \multicolumn{2}{c}{8} \\
		& it & $\kappa$
		& it & $\kappa$
		& it & $\kappa$
		& it & $\kappa$
		& it & $\kappa$
		& it & $\kappa$
		& it & $\kappa$ \\
		\midrule
		$1$  &  $8$ &  $1.47$ &  $9$ &  $1.64$ &  $10$ &  $1.79$ &  $11$ &  $1.94$ &  $12$ &  $2.08$ & $13$ &  $2.22$ & $14$ &  $2.34$ \\
		$2$  &  $10$ &  $2.03$ &  $11$ &  $2.25$ & $12$ &  $2.46$ & $13$ &  $2.65$ & $14$ &  $2.82$ & $15$ &  $2.94$ & $16$ &  $3.13$ \\
		$3$  & $13$ &  $2.77$ & $14$ &  $3.04$ & $15$ &  $3.31$ & $15$ &  $3.55$ & $16$ &  $3.76$ & $17$ &  $3.95$ & $18$ &  $4.13$ \\
		$4$  & $15$ &  $3.70$ & $16$ &  $4.03$ & $17$ &  $4.33$ & $18$ &  $4.62$ & $18$ &  $4.87$ & $20$ &  $5.09$ & $21$ &  $5.31$ \\
		$5$  & $18$ &  $4.80$ & $19$ &  $5.19$ & $19$ &  $5.55$ & $20$ &  $5.87$ & $21$ &  $6.15$ & $21$ &  $6.41$ & $23$ &  $6.65$ \\
		$6$  & $20$ &  $6.08$ & $21$ &  $6.52$ & $22$ &  $6.93$ & $22$ &  $7.29$ & $23$ &  $7.62$ & $24$ &  $7.91$ & $24$ &  $8.17$ \\
		$7$  & $22$ &  $7.53$ & $23$ &  $8.03$ & $23$ &  $8.49$ & $24$ &  $8.90$ & $25$ &  $9.26$ & $25$ &  $9.58$ & $26$ &  $9.87$ \\
		\bottomrule
	\end{tabular}
	\captionof{table}{Iteration counts (it) and condition numbers $\kappa$; Algorithm B; Yeti-footprint
		\label{tab:Yeti_edge}}
\end{table}
\begin{table}[t]
\scriptsize
	\centering
	\renewcommand{\arraystretch}{1.25}
	\begin{tabular}{l|rr|rr|rr|rr|rr|rr|rr}
		\toprule
		\multicolumn{1}{l}{r$\;\;\diagdown\;\;$p\hspace{-1.8em}\;}
		& \multicolumn{2}{c|}{2}
		& \multicolumn{2}{c|}{3}
		& \multicolumn{2}{c|}{4}
		& \multicolumn{2}{c|}{5}
		& \multicolumn{2}{c|}{6}
		& \multicolumn{2}{c|}{7}
		& \multicolumn{2}{c}{8} \\
		& it & $\kappa$
		& it & $\kappa$
		& it & $\kappa$
		& it & $\kappa$
		& it & $\kappa$
		& it & $\kappa$
		& it & $\kappa$ \\
		\midrule
		$1$  &  $6$ &  $1.22$ &  $7$ &  $1.38$ &  $8$ &  $1.50$ &  $9$ &  $1.65$ &  $10$ &  $1.77$ & $11$ &  $1.88$ & $12$ &  $1.97$ \\
		$2$  &  $8$ &  $1.43$ &  $9$ &  $1.61$ & $10$ &  $1.78$ & $11$ &  $1.95$ & $12$ &  $2.10$ & $13$ &  $2.25$ & $14$ &  $2.39$ \\
		$3$  & $10$ &  $1.84$ & $11$ &  $2.11$ & $12$ &  $2.34$ & $13$ &  $2.56$ & $13$ &  $2.74$ & $14$ &  $2.91$ & $16$ &  $3.09$ \\
		$4$  & $12$ &  $2.48$ & $13$ &  $2.82$ & $14$ &  $3.12$ & $15$ &  $3.38$ & $16$ &  $3.62$ & $17$ &  $3.83$ & $18$ &  $4.04$ \\
		$5$  & $14$ &  $3.32$ & $15$ &  $3.75$ & $16$ &  $4.13$ & $17$ &  $4.45$ & $18$ &  $4.73$ & $19$ &  $4.99$ & $20$ &  $5.23$ \\
		$6$  & $17$ &  $4.41$ & $18$ &  $4.93$ & $18$ &  $5.37$ & $19$ &  $5.76$ & $20$ &  $6.10$ & $21$ &  $6.40$ & $22$ &  $6.69$ \\
		$7$  & $19$ &  $5.73$ & $20$ &  $6.34$ & $21$ &  $6.86$ & $21$ &  $7.31$ & $22$ &  $7.70$ & $23$ &  $8.05$ & $24$ &  $8.37$ \\
		\bottomrule
	\end{tabular}
	\captionof{table}{Iteration counts (it) and condition numbers $\kappa$; Algorithm C; Yeti-footprint
		\label{tab:Yeti_vertexEdge}}
\end{table}

In the Tables~\ref{tab:Yeti_vertex}, \ref{tab:Yeti_edge}
and~\ref{tab:Yeti_vertexEdge}, we present the numerical experiments
for the Yeti-footprint (Figure~\ref{subfig:Yeti}). Here, we observe in all
cases that the growth of the condition number in the grid size is like
$\log^2 H/h$ and the growth in the spline degree is like $\sqrt p$ or slightly
smaller. Opposite to the results for the triple ring, we observe that Algorithm~B performs
better than Algorithm~A. Again, Algorithm~C yields the smallest condition numbers, see Table~\ref{tab:Yeti_vertexEdge}.

\section{Conclusions}
\label{sec:6}
In the paper, we have extended the known convergence analysis for IETI-DP methods
such that it also covers the case that only edge-averages are used as primal degrees
of freedom. Moreover, we provide estimates for the condition number that are explicit
both in the grid sizes and in the spline degree. The main ingredient for that analysis
is an estimate for the discrete harmonic extension, cf. Section~\ref{sec:4:1}. The
dependence of the condition number on the grid size is the same as in any other standard
analysis. In the spline degree, the bound on the convergence number depends like
$p \log^2 p$ on the spline degree $p$. Most numerical experiments indicate that
the true growth might be smaller, but one of the numerical experiments has shown a
growth similar to the upper bound from the convergence theory.
Since in Isogeometric Analysis, only moderate
values of $p$ are of interest, the corresponding dependence seems to be moderate.

Note that the condition number of the stiffness matrix $\kappa(A)$ grows exponentially in the
spline degree. This means that the condition number of the preconditioned IETI-DP
system grows only logarithmic in $\kappa(A)$, if the spline degree $p$ is increased.
The same is observed for the grid size, since $\kappa(A)\eqsim h^{-2}$ and
the condition number of the preconditioned IETI-DP system grows (poly-)logarithmic in $h^{-1}$
or, equivalently, in $\kappa(A)$.

The extension of the presented method to problems with varying diffusion coefficients
seems to be straight-forward. Our analysis for Algorithm~B also allows the extension of the
IETI-DP solvers (and their analysis) to discretizations with non-matching patches, that
are of interest for moving domains, like for the analysis of electrical motors.

\section*{Acknowledgments}
The first author was supported by the Austrian Science Fund (FWF): S117 and 
W1214-04, while the second author has received support by the Austrian Science
Fund (FWF): P31048.
Moreover, the authors thank Ulrich Langer for fruitful discussions and help
with the study of existing literature.

%
%
\bibliography{Literature.bib}

\end{document}